\newenvironment{enumroman}{\begin{enumerate}[\upshape (i)]}{\end{enumerate}}
\newenvironment{myindentpar}[1]
{\begin{list}{}
         {\setlength{\leftmargin}{#1}}
         \item[]
}
{\end{list}}
\theoremstyle{plain}
\newtheorem{thm}{Theorem}[section] 
\newtheorem{cor}[thm]{Corollary}
\newtheorem{lem}[thm]{Lemma}
\theoremstyle{definition}
\newtheorem{defn}[thm]{Definition}
\newtheorem{ntn}[thm]{Notation}
\newtheorem*{thank}{Acknowledgments}
\theoremstyle{remark}
\newtheorem{ex}[thm]{Example}
\newtheorem{rmk}[thm]{Remark}
\DeclareMathOperator*{\pb}{{\times}}
\DeclareMathOperator*{\po}{{\sqcup}}
\let\c@equation\c@thm
\numberwithin{equation}{section}
\newcommand{\R}{\mathbb{R}}
\renewcommand{\L}{\mathbb{L}}
\newcommand{\G}{\mathcal{G}}
\newcommand{\W}{\mathcal{W}}
\newcommand{\M}{\mathcal{M}}
\newcommand{\N}{\mathcal{N}}
\newcommand{\K}{\mathcal{K}}
\newcommand{\I}{\mathcal{I}}
\newcommand{\J}{\mathcal{J}}
\newcommand{\V}{\mathcal{V}}
\newcommand{\cL}{\mathcal{L}}
\newcommand{\cR}{\mathcal{R}}
\newcommand{\cC}{\mathcal{C}}
\newcommand{\cF}{\mathcal{F}}
\newcommand{\D}{\mathbb{D}}
\newcommand{\C}{\mathbb{C}}
\newcommand{\F}{\mathbb{F}}
\renewcommand{\a}{\alpha}
\renewcommand{\b}{\beta}
\renewcommand{\d}{\delta}
\newcommand{\e}{\epsilon}
\renewcommand{\l}{\lambda}
\renewcommand{\r}{\rho}
\newcommand{\iso}{\mathbf{I}}
\newcommand{\op}{\mathrm{op}}
\newcommand{\id}{\text{id}}
\newcommand{\dom}{\mathrm{dom}}
\newcommand{\cod}{\mathrm{cod}}
\renewcommand{\lim}{\text{lim}}
\newcommand{\hast}{\hat{\otimes}}
\newcommand{\htimes}{\hat{\times}}
\newcommand{\hotimes}{\hat{\otimes}}
\newcommand{\hsmash}{\hat{\wedge}}
\newcommand{\hhom}{\hat{\mathrm{hom}}}
\newcommand{\hhoml}{\hat{\mathrm{hom}_\ell}}
\newcommand{\hhomr}{\hat{\mathrm{hom}_r}}
\newcommand{\homr}{\mathrm{hom}_r}
\newcommand{\homl}{\mathrm{hom}_\ell}
\renewcommand{\1}{{\boldsymbol{1}}}
\newcommand{\4}{{\boldsymbol{4}}}
\renewcommand{\2}{{\boldsymbol{2}}}
\newcommand{\3}{{\boldsymbol{3}}}
\newcommand{\Coalg}{\mathbb{C}\mathrm{\bf oalg}}
\newcommand{\Alg}{\mathbb{A}\mathrm{\bf lg}}
\newcommand{\SSq}{\mathbb{S}\mathrm{\bf q}}
\newcommand{\ladj}{\mathrm{ladj}}
\newcommand{\Rtalg}{\mathbb{R}_t\text{-}\mathrm{{\bf alg}}}
\newcommand{\Rpalg}{\mathbb{R}'\text{-}\mathrm{{\bf alg}}}
\newcommand{\Ralg}{\mathbb{R}\text{-}\mathrm{{\bf alg}}}
\newcommand{\Ftalg}{\mathbb{F}_t\text{-}\mathrm{{\bf alg}}}
\newcommand{\Falg}{\mathbb{F}\text{-}\mathrm{{\bf alg}}}
\newcommand{\Fpalg}{\mathbb{F}'\text{-}\mathrm{{\bf alg}}}
\newcommand{\Qalg}{\mathbb{Q}\text{-}\mathrm{{\bf coalg}}}
\newcommand{\Ltalg}{\mathbb{L}_t\text{-}\mathrm{{\bf coalg}}}
\newcommand{\Lalg}{\mathbb{L}\text{-}\mathrm{{\bf coalg}}}
\newcommand{\Ctalg}{\mathbb{C}_t\text{-}\mathrm{{\bf coalg}}}
\newcommand{\Calg}{\mathbb{C}\text{-}\mathrm{{\bf coalg}}}
\newcommand{\Cpalg}{\mathbb{C}'\text{-}\mathrm{{\bf coalg}}}
\newcommand{\Cptalg}{\mathbb{C}'_t\text{-}\mathrm{{\bf coalg}}}
\newcommand{\Lpalg}{\mathbb{L}'\text{-}\mathrm{{\bf coalg}}}
\newcommand{\lra}{\longrightarrow}
\newcommand{\sr}{\stackrel}
\newcommand{\To}{\Rightarrow}
\newcommand{\Cat}{\text{\bf Cat}}
\renewcommand{\Top}{\text{\bf Top}}
\newcommand{\sSet}{\text{\bf sSet}}
\newcommand{\CAT}{\text{\bf CAT}}
\newcommand{\FunF}{\text{\bf FF}}
\newcommand{\AWFS}{\text{\bf AWFS}}
\newcommand{\LAWFS}{\text{\bf LAWFS}}
\newcommand{\Cmd}{\text{\bf Cmd}}
\begin{document}

\title{Monoidal algebraic model structures}
\author{Emily Riehl}

\address{Deptartment of Mathematics\\Harvard University \\ 1 Oxford Street\\ Cambridge, MA 02138}
\email{ eriehl@math.harvard.edu}
\date{\today}

\maketitle

\begin{abstract} 
Extending previous work, we define monoidal algebraic model structures and give examples. The main structural component is what we call an algebraic Quillen two-variable adjunction; the principal technical work is to develop the category theory necessary to characterize them. Our investigations reveal an important role played by ``cellularity''---loosely, the property of a cofibration being a relative cell complex, not simply a retract of such---which we particularly emphasize. A main result is a simple criterion which shows that algebraic Quillen two-variable adjunctions correspond precisely to cell structures on the pushout-products of generating (trivial) cofibrations. As a corollary, we discover that the familiar monoidal model structures on categories and simplicial sets admit this extra algebraic structure.
\end{abstract}

\tableofcontents 

\section{Introduction}

Algebraic model structures, introduced in \cite{riehlalgebraic}, are a structural extension of Quillen's model categories \cite{quillenhomotopical} in which cofibrations and fibrations are ``algebraic,'' i.e., equipped with specified retractions to their left or right factors which can be used to solve all lifting problems. The factorizations themselves are much more than functorial: the functor mapping an arrow to its right factor is a monad and the functor mapping to its left factor is a comonad on the arrow category. In particular, the data of an algebraic model category determines a fibrant replacement monad and a cofibrant replacement comonad.

Despite the stringent structural requirements of this definition, algebraic model structures are quite abundant. A modified small object argument, due to Richard Garner produces an algebraic model structure in place of an ordinary cofibrantly generated one \cite{garnerunderstanding}. The difference is that the main components of a model structure {---}the \emph{weak factorization systems} $(\cC \cap \W, \cF)$ and $(\cC, \cF \cap \W)$---are replaced with \emph{algebraic weak factorization systems} $(\C_t,\F)$ and $(\C,\F_t)$, which are categorically better behaved.

We find the weak factorization system perspective on model categories clarifying. The overdetermination of the model category axioms and the closure properties of the classes of cofibrations and fibrations are consequences of analogous characteristics of the constituent weak factorization systems. Quillen's small object argument is really a construction of a functorial factorization for a cofibrantly generated weak factorization system; the model structure context is beside the point. Also, the equivalence of various definitions of a Quillen adjunction has to do with the separate interactions between the adjunction and each weak factorization system. 

More precisely, an \emph{algebraic model structure} on a category $\M$ with weak equivalences $\W$ consists of two algebraic weak factorization systems (henceforth, \emph{awfs} for both the singular and the plural) together with a morphism $\xi \colon (\C_t,\F) \to (\C,\F_t)$ between them such that the underlying weak factorization systems form a model structure in the usual sense. Here $\C_t$ and $\C$ are comonads and $\F_t$ and $\F$ are monads on the arrow category $\M^\2$ that send an arrow to its appropriate factor with respect to the functorial factorizations of the model structure. We write $R,Q \colon \M^\2 \rightrightarrows \M$ for the functors that assign to an arrow the object through which it factors. The notation is meant to evoke fibrant/cofibrant replacement: slicing over the terminal object or under the initial object defines the fibrant replacement monad and cofibrant replacement comonad, also denoted $R$ and $Q$.

The natural transformation $\xi$, which we call the \emph{comparison map}, plays a number of roles. Its components \begin{equation}\label{xidefn} \xymatrix{ \dom f \ar[d]_{C_tf} \ar[r]^{Cf} & Qf \ar[d]^{F_tf} \\ Rf \ar[r]_{Ff} \ar[ur]^{\xi_f} & \cod f}\end{equation}
are natural solutions to the lifting problem (\ref{xidefn}) that compares the two functorial factorizations of $f \in \M^\2$. Additionally, $\xi$ must satisfy two pentagons: one involving the comultiplications of the comonads and one involving the multiplications of the monads. Under these hypotheses, $\xi$ determines functors over $\M^\2$ \begin{equation}\label{xifunctors} \xi_* \colon  \Ctalg \to \Calg \quad \quad\quad  \xi^* \colon \Ftalg \to \Falg \end{equation} between the categories of coalgebras for the comonads and between the categories of algebras for the monads.

Elements of, e.g., the category $\Falg$ are called \emph{algebraic fibrations}; their images under the forgetful functor to $\M^\2$ are in particular fibrations in the model structure. The algebra structure associated to an algebraic fibration determines a canonical solution to any lifting problem of that arrow against an algebraic trivial cofibration. Naturality of $\xi$ together with the functors (\ref{xifunctors}) imply that there is also a single canonical solution to any lifting problem of an algebraic trivial cofibration against an algebraic trivial fibration: the solution constructed using $\xi_*$ and the awfs $(\C,\F_t)$ agrees with the solution constructed using $\xi^*$ and the awfs $(\C_t,\F)$.

For certain lifting problems, these canonical solutions themselves assemble into a natural transformation. For instance, the natural solution to the usual lifting problem that compares the two fibrant-cofibrant replacements of an object defines a natural transformation $RQ \To QR$ that turns out to be a distributive law of the fibrant replacement monad over the cofibrant replacement comonad. It follows that $Q$ lifts to a comonad on the category $\Ralg$ of algebraic fibrant objects, and dually $R$ lifts to a monad on $\Qalg$. The coalgebras for the former and algebras for the later coincide, defining a category of algebraic fibrant-cofibrant objects.

Any ordinary cofibrantly generated model structure gives rise to an algebraic model structure using a modified form of Quillen's small object argument due to Richard Garner. As a result, this algebraic structure is much more common that might be supposed. Whenever the category permits the small object argument, any small category of arrows generates an algebraic weak factorization system that satisfies two universal properties, both of which we frequently exploit \cite{garnercofibrantly, garnerunderstanding}. 

Awfs were introduced to improve the categorical properties of ordinary weak factorization systems \cite{gtnatural}. One feature of awfs is that the left and right classes are closed under colimits and limits, respectively, in the following precise sense. By standard monadicity results, the forgetful functors $\Calg \to \M^\2$, $\Falg \to \M^\2$ create all colimits and limits, respectively, existing in $\M^\2$. In the context of algebraic model structures, this gives a new recognition principle for cofibrations constructed as colimits and fibrations constructed as limits.  Familiarly, a colimit (in the arrow category) of cofibrations is not necessarily a cofibration. But if the cofibrations admit coalgebra structures that are preserved by the maps in the diagram, then the colimit is canonically a coalgebra and hence a cofibration.

When the model structure is cofibrantly generated, all fibrations and all trivial fibrations are \emph{algebraic}, i.e., admit algebra structures; interestingly the dual statements do not hold. Transfinite composites of pushouts of coproducts of generating cofibrations $i \in \I$---the class of maps denoted $\I$-cell in the classical literature \cite{hoveymodel, hirschhornmodel}---are necessarily algebraic cofibrations. Accordingly, we call the class of cofibrations that admit a $\C$-coalgebra structure the \emph{cellular} cofibrations; a cofibration is cellular if and only if it can be made algebraic. All cofibrations are at least retracts of cellular ones. Cellularity will play an interesting and important role in the new results that follow.

For example, if $A$ is a commutative ring, the arrow $0 \to A$ generates an awfs on the category of $A$-modules whose right class is the epimorphisms and whose left class is the injections with \emph{projective} cokernel. Each epimorphism $M \twoheadrightarrow N$ admits (likely many) algebra structures: an algebra structure is a section $N \to M$, not assumed to be a homomorphism. The cellular maps---that is, those arrows admitting coalgebra structures---are those injections which have \emph{free} cokernel.

The basic theory of algebraic model structures is developed in \cite{riehlalgebraic}; references to results therein will have the form I.x.x. In particular, that paper defines an \emph{algebraic Quillen adjunction}, which is an ordinary Quillen adjunction such that the right adjoint lifts to commuting functors between the algebraic (trivial) fibrations and the left adjoint lifts to commuting functors between the categories of algebraic (trivial) cofibrations. This should be thought of an algebraization of the usual condition that the right adjoint preserves fibrations and trivial fibrations and left adjoint preserves cofibrations and trivial cofibrations. We also ask that the lifts of one adjoint determine the lifts of the other in a sense made precise below, a condition that mirrors the classical fact that a Quillen adjunction can be detected by examining the left or right adjoint alone. Algebraic Quillen adjunctions exist in an important class of examples: when a cofibrantly generated algebraic model structure is lifted along an adjunction, the resulting Quillen adjunction is canonically algebraic. Examples include the geometric realization--total singular complex adjunction between simplicial sets and spaces, the adjunction between $G$-spaces and space-valued presheaves on the orbit category for a group $G$, and the adjunctions establishing a projective model structure.

A classical categorical result characterizes lifted functors of algebraic fibrations, i.e., functors between the categories of algebras for the monads, as certain natural transformations sometimes called \emph{lax monad morphisms}, but this condition alone fails to capture the symmetry of the classical situation where a right adjoint preserves fibrations if and only if its left adjoint preserves trivial cofibrations. There are two ways to describe the desired additional hypothesis. One is to ask that the \emph{mate} of the natural transformation characterizing the lifted functor of algebraic fibrations defines the lifted functor of algebraic trivial cofibrations. An equivalent condition is that the lifted functor of algebraic fibrations is in fact a lifted \emph{double functor} between double categories of algebraic fibrations, suitably defined. 

In this paper, we extend these results in order to define monoidal and eventually enriched algebraic model structures. The technical work in this paper puts the latter definition in immediate reach; however, we postpone it to a future paper which will have space to fully explore examples.  Much of the structure of a closed monoidal category or a tensored and cotensored enriched category is encoded in a \emph{two-variable adjunction}. For enriched categories, the constituent bifunctors are commonly denoted \[\xymatrix@C=30pt{ \V \times \M \ar[r]^-{-\odot-} & \M} \quad \xymatrix@C=30pt{ \V^\op \times \M \ar[r]^-{ \{-,-\}} &\M} \quad \xymatrix@C=30pt{ \M^\op \times \M \ar[r]^-{\hom(-,-)} & \V}\] and come equipped with hom-set isomorphisms \begin{equation}\label{enrichedhomset} \M(V \odot M, N) \cong \M(M, \{V,N\}) \cong \V(V, \hom(M,N))\end{equation} natural in all three variables. Fixing any one variable, two-variable adjunctions give rise to parameterized families of ordinary adjunctions, e.g., $- \odot M \dashv \hom(M,-)$.

The monoidal case necessarily precedes the enriched one but also inherits  all of its complexity.
A closed monoidal category with an algebraic model structure is a \emph{monoidal algebraic model category} if the canonical comparison between a cofibrant object and its tensor with the cofibrant replacement of the monoidal unit is a weak equivalence and if the closed monoidal structure is an \emph{algebraic Quillen two-variable adjunction}. Such an adjunction consists of three functors 
\[\xymatrix@R=0pt{\Ctalg\times \Calg \ar[r] & \Ctalg \\ \Calg \times \Ctalg \ar[r] & \Ctalg \\ \Calg \times \Calg \ar[r] & \Calg}\]
lifting the so-called ``pushout-product'' such that the mates of the characterizing natural transformations determine similar lifts of the left and right closures. In the best cases, these functors satisfy three evident coherence conditions which say that various canonical coalgebra structures agree, but we shall see that such coherence is too much to ask for in general. 

One could also define a weaker notion of an algebraic Quillen bifunctor in the context of monoidal or enriched model categories in which some of the adjoint bifunctors don't exist. This is  less categorically challenging than the theory presented here, so the details may be safely left to the reader. 

Three main technical theorems allow us to identify algebraic Quillen two-variable adjunctions in practice. The first describes a composition criterion that identifies when a lifted bifunctor is part of a two-variable adjunction of awfs, the appropriate notion of algebraic Quillen two-variable adjunction for categories equipped with a single awfs in place of a full algebraic model structure. The other two results, which we call the cellularity and uniqueness theorems, combine to characterize two-variable adjunctions of awfs in the case when the awfs are cofibrantly generated. The cellularity theorem says that a two-variable adjunction of awfs arises from any assignment of coalgebra structures to the pushout-product of the generators; hence, such structures exist if and only if the pushout-product of the generators is \emph{cellular}. The uniqueness theorem says that such an assignment completely determines the lifted functors, so at most one two-variable adjunction of awfs can be obtained in this way. 

Several new categorical results were necessary to make all of this precise. Of most general categorical interest is the theory of \emph{parameterized mates}, introduced in \S\ref{matessec} below. This theory describes the relationship between the natural transformations characterizing the lifts of the three bifunctors constituting a two-variable adjunction of awfs and their interactions with ordinary adjunctions of awfs.

Other results appearing below are designed to deal with complications arising in the proofs of the cellularity and uniqueness theorems. The main technical difficulty is quite simply accounted for: in \cite{riehlalgebraic}, the only adjunctions considered between arrow categories were those of the form $T^\2 \colon \M^\2 \rightleftarrows \K^\2\colon  S^\2 $, i.e., defined pointwise by an ordinary adjunction $T  \colon \M \rightleftarrows \K\colon S$ between the base categories. However, the adjunctions on arrow categories arising from two-variable adjunctions on the bases no longer have this form and in particular don't preserve composability of arrows. Thus, the double categorical composition criterion we use to great effect in the previous paper to characterize those lifted left adjoints that determine lifts of right adjoints must take on a new form.

In \S\ref{matessec}, we introduce double categories, mates, and parameterized mates and prove some elementary lemmas which will be quoted frequently. In \S\ref{prelimsec}, we review lifting properties and functorial factorizations, wfs and awfs, and the algebraic small object argument. In \S\ref{morawfssec}, we present a variety of notions of morphism between awfs on different categories and define the new notion of two-variable adjunction of awfs. In \S\ref{compsec}, we prove the composition criterion which allows us to recognize when a given lifted bifunctor of awfs (co)algebras determines a two-variable adjunctions of awfs. In \S\ref{cellsec}, we use this result to prove the cellularity theorem. We then extend the universal property of Garner's small object argument and use this to prove the uniqueness theorem. In \S\ref{algQuillsec}, we apply these results to model categories, introducing a notion of algebraic Quillen two-variable adjunctions. Finally in \S\ref{monsec}, we define monoidal algebraic model structures and describe examples.

\section{Double categories, mates, parameterized mates}\label{matessec}

The calculus of \emph{mates} will play an important conceptual and calculational role in what follows. To streamline later proofs, we take a few moments in \S\ref{matesssec} to outline the important features without getting mired in technical details. The canonical reference is \cite{kellystreetreview}; we also like \cite{shulmancomparing}.

Bifunctors, meaning functors whose domain is the product of two categories, are determined by the collection of single-variable functors obtained when one object is fixed together with the natural transformations between such functors arising from morphisms in that category. This fact is often expressed by saying that category $\CAT$ is cartesian closed. For this simple reason, the classical theory of mates extends to a new theory of \emph{parameterized mates}, introduced in \S\ref{paramatesssec}.

\subsection{Double categories and mates}\label{matesssec}

A \emph{double category} $\D$ is a category internal to $\CAT$:
\[\xymatrix{ \D_1 \displaystyle\pb_{\D_0} \D_1 \ar[r]^-{\circ} & \D_1 \ar@<1ex>[r]^{\dom} \ar@<-1ex>[r]_{\cod} & \D_0 \ar[l]|{\id}}\] The objects and arrows of $\D_0$ are called objects and horizontal arrows of $\D$ while the objects and arrows of $\D_1$ are called vertical arrows and squares. Via the functors $\dom,\cod \colon \D_1 \rightrightarrows \D_0$, the sources and targets of vertical arrows are objects of $\D$, and likewise the squares can be depicted in the way their name suggests. Squares can be composed horizontally using composition in $\D_1$ and vertically using the functor $\circ$, whose domain is the pullback of dom along cod. As a consequence of functoriality of $\circ$, the order in which vertical and horizontal composites are taken in a pasting diagram of squares does not matter.  We refer to $\D_1$ as the \emph{category of vertical arrows}; this category forgets the composition of vertical arrows and remembers only the horizontal composition of squares.

\begin{ex} 
A category $\M$ gives rise to a double category $\SSq(\M)$ 
\[\xymatrix{ \M^\3 \cong \M^\2 \displaystyle\pb_{\M} \M^\2 \ar[r]^-{\circ} & \M^\2 \ar@<1ex>[r]^{\dom} \ar@<-1ex>[r]_{\cod} & \M \ar[l]|{\id}}\]
whose objects are objects of $\M$, horizontal and vertical arrows are morphisms of $\M$, and squares are commutative squares. The category of vertical arrows is usually called the arrow category and plays an essential role in this paper.
\end{ex}

Given categories, functors, and adjunctions, as displayed below, there is a bijection between natural transformations in the square involving the left adjoints and natural transformations in the square involving the right adjoints
\begin{equation}\label{genericmates}\xymatrix{\cdot \ar@<-.75ex>[d]_-T \ar@{}[d]|-{\dashv} \ar[r]^H & \cdot\ar@<-.75ex>[d]_-{T'} \ar@{}[d]|-{\dashv} &  &  \cdot \ar[d]_T \ar[r]^H \ar@{}[dr]|{\l\, \Swarrow} & \cdot \ar[d]^{T'} & \ar@{}[d]|{\leftrightsquigarrow} & \cdot \ar[r]^H \ar@{}[dr]|{\Searrow\ \r}  & \cdot \\ \cdot \ar@<-.75ex>[u]_-S \ar[r]_K & \cdot \ar@<-.75ex>[u]_-{S'} & & \cdot \ar[r]_K & \cdot  &  & \cdot \ar[u]^S \ar[r]_K & \cdot \ar[u]_{S'}}\end{equation} 
given by the formulas
\begin{equation}\label{matesformula} \r = S'K\e \cdot S'\l_S \cdot \iota_{HS} \quad \mathrm{and} \quad \l = \nu_{KT} \cdot T'\r_T \cdot T'H\eta, \end{equation}
where $\eta$ and $\e$ are the unit and counit for $T \dashv S$ and $\iota$ and $\nu$ are the unit and counit for $T' \dashv S'$. Corresponding $\l$ and $\r$ are called \emph{mates}. 

\begin{ex}\label{trivialmatesex}
A natural transformation $H \To K$ is its own mate with respect to the identity adjunctions.
\end{ex}

\begin{ex} Write $\1$ for the terminal category. \emph{Adjunct} arrows $f^\sharp \colon Tm \to k \in \K$, $f\colon m \to Sk \in \M$ corresponding under the adjunction $T \colon \M \rightleftarrows \K\colon S$ are mates in the following squares \[\xymatrix{ {\1} \ar[d]_1 \ar[r]^m \ar@{}[dr]|{f^\sharp \Swarrow} & \M \ar[d]^T & \ar@{}[d]|{\leftrightsquigarrow} & {\1} \ar[r]^m \ar@{}[dr]|{\Searrow f} & \M  \\ {\1} \ar[r]_k & \K &&{\1} \ar[r]_k \ar[u]^1 & \K \ar[u]_S} \] 
\end{ex}

\begin{ex}\label{paramatesex} If $\M$ has a left-closed monoidal structure and $f \colon m' \to m \in \M$, then the induced natural transformations \[\xymatrix@C=35pt{ \M \ar[r]^1 \ar[d]_{m \otimes -} \ar@{}[dr]|{f \otimes- \Swarrow} & \M \ar[d]^{m' \otimes -} & \ar@{}[d]_{\leftrightsquigarrow}  &  \M \ar[r]^1 \ar@{}[dr]|{\Searrow \hom(f,-)} & \M \\ \M \ar[r]_1 & \M && \M \ar[r]_1 \ar[u]^{\hom(m,-)} & \M \ar[u]_{\hom(m',-)} }\] are mates. Analogous correspondences hold for any \emph{parameterized adjunction} \cite[IV.7.3]{maclanecategories}.
\end{ex}

There are double categories $\L${\bf adj} and $\R${\bf adj} whose objects are categories, horizontal arrows are functors, vertical arrows are adjunctions in the direction of the left adjoint, and whose squares are natural transformations as displayed in the middle and right-hand squares of (\ref{genericmates}), respectively. The mates correspondence is natural, or, more accurately, functorial, in the following precise sense.

\begin{thm}[Kelly-Street {\cite[\S 2]{kellystreetreview}}]\label{dblmatesthm}The mates correspondence gives an isomorphism of double categories $\L${\bf adj}$ \cong \R${\bf adj}.
\end{thm}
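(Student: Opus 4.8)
\emph{Strategy.} The plan is to produce an explicit double functor $\Phi\colon \L${\bf adj} $\to \R${\bf adj} together with an explicit inverse $\Psi$. Since $\L${\bf adj} and $\R${\bf adj} have the same objects (categories), the same horizontal arrows (functors), and the same vertical arrows (adjunctions equipped with a chosen unit and counit, pointing in the direction of the left adjoint), $\Phi$ is forced to be the identity on all of this data; its only content lies in its action on squares, and there we declare it to send a square $\l\colon T'H\To KT$ of $\L${\bf adj} to its mate $\r\colon HS\To S'K$ given by the first formula of (\ref{matesformula}). The candidate $\Psi$ is defined dually by the second formula. Proving the theorem then reduces to three points: (a) the two formulas of (\ref{matesformula}) are well defined and mutually inverse as operations on squares with a fixed boundary; (b) the resulting bijection on squares preserves horizontal composites and horizontal identity squares, so that $\Phi$ and $\Psi$ restrict to functors between the categories of vertical arrows; and (c) it is compatible with the vertical-composition functor $\circ$ and with the identity-assigning functor $\D_0\to\D_1$, so that $\Phi$ and $\Psi$ are double functors.

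\emph{Step (a).} First I would check that $\r=S'K\e\cdot S'\l_S\cdot\iota_{HS}$ really is a natural transformation with domain $HS$ and codomain $S'K$: it is a vertical pasting of whiskered natural transformations, hence natural, and tracing sources and targets fixes the boundary, so $\r$ is a legitimate square of $\R${\bf adj}; dually for $\l$ built from $\r$. That the two assignments are mutually inverse is the classical mates bijection: substitute one formula into the other, use naturality to slide the whiskered transformations past one another, and apply the triangle identities for $T\dashv S$ and $T'\dashv S'$ to collapse the composite to an identity. This is routine bookkeeping that I would not belabor.

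\emph{Steps (b) and (c).} Compatibility with the source and target functors $\dom,\cod\colon\D_1\to\D_0$ is automatic, since a square and its mate have the same horizontal edges. There remain four compatibilities. \emph{Horizontal identity squares:} the mate of the identity square on a vertical arrow $T\dashv S$ — the case $\l=\id$, $H=K=\id$ — computes by (\ref{matesformula}) to $S\e\cdot\eta_S$, which is $\id_S$ by a triangle identity, so horizontal identities are preserved; the matching statement for the functor $\D_0\to\D_1$, involving identity adjunctions, is exactly Example \ref{trivialmatesex}. \emph{Horizontal composition:} given squares $\l$ and $\mu$ pasted along a common vertical arrow $T'\dashv S'$, with outer horizontal functors $H'H$ on top and $K'K$ on the bottom, one must show that the mate of the pasted square $(K'\l)\cdot(\mu H)$ equals the pasted square $(\s K)\cdot(H'\r)$ of the mates $\r$ of $\l$ and $\s$ of $\mu$; expanding both sides with (\ref{matesformula}) and matching them uses naturality of $\iota$ and $\nu$ together with a triangle identity for $T'\dashv S'$. \emph{Vertical composition:} here one first records that the composite of the adjunctions $T_1\dashv S_1$ and $T_2\dashv S_2$ is $T_2T_1\dashv S_1S_2$ with unit $S_1\eta_2T_1\cdot\eta_1$ and counit $\e_2\cdot T_2\e_1S_2$ (and likewise for the right edges), substitutes these into (\ref{matesformula}) applied to the vertical paste of $\l_1$ and $\l_2$, and reconciles the outcome with the vertical paste of the mates $\r_1$ and $\r_2$ formed in $\R${\bf adj}, using naturality and the triangle identities of $T_1\dashv S_1$ and $T_2\dashv S_2$. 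Granting all four, $\Phi$ is a double functor; the same computations, or simply the bijection of step (a), show that $\Psi$ is too and that $\Psi\Phi=\id$ and $\Phi\Psi=\id$, which is the claimed isomorphism.

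\emph{Main obstacle.} The vertical-composition check in (c) is the only genuinely laborious step: it is where we must unwind the unit and counit of a composite adjunction, and the ensuing reconciliation is a longish pasting computation that has to balance several naturality squares against the triangle identities of the two composed adjunctions. The way to keep it under control is to draw the squares out in full and read both composites off a single pasting diagram, so that the cancellations are manifest rather than produced by blind symbol-pushing; everything else reduces to short diagram chases.
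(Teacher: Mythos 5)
The paper does not prove this statement at all: it is quoted as a classical result of Kelly--Street \cite[\S 2]{kellystreetreview}, and the only surrounding text is the remark explaining what the isomorphism means for pasting diagrams. So there is no in-paper argument to compare against; what you have written is a correct reconstruction of the standard proof from the cited source. Your reduction is the right one: the two double categories share objects, horizontal arrows and vertical arrows, so the putative double functors are forced to be the identity there, and all content sits in the action on squares, where the formulas (\ref{matesformula}) apply; mutual invertibility is the usual triangle-identity computation, the two identity checks (identities for horizontal composition in the category of vertical arrows, and the image of the identity-assigning functor $\D_0 \to \D_1$, i.e.\ Example \ref{trivialmatesex}) are as you say, and the two real verifications are that taking mates commutes with horizontal pasting (naturality plus one triangle identity for the shared middle adjunction) and with vertical pasting (unwinding the unit $S_1\eta_2 T_1\cdot \eta_1$ and counit $\e_2\cdot T_2\e_1 S_2$ of the composite adjunction), both of which you identify with the correct mechanism. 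The orientations $\l \colon T'H \To KT$ and $\r \colon HS \To S'K$ also match the paper's conventions in (\ref{genericmates}). In short: no gap, and your outline is exactly the argument the paper is implicitly leaning on when it invokes the ``calculus of mates.''
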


This says that a natural transformation obtained by pasting squares in $\L${\bf adj} either vertically or horizontally is the mate of the natural transformation obtained by pasting the mates of these squares in $\R${\bf adj}. The ``calculus of mates'' refers to this fact, which, when used in conjunction with Examples \ref{trivialmatesex}--\ref{paramatesex}, implies that mates satisfy ``dual'' diagrams. 

For instance, suppose the functors $H$ and $K$ of (\ref{genericmates}) are monads $(H,\eta,\mu)$, $(K, \eta,\mu)$ and suppose $T=T'$ and $S = S'$. A pair $(S,\rho)$ as in the right square of (\ref{genericmates}) is a \emph{lax monad morphism} if
\begin{equation}\label{laxmor}\vcenter{\xymatrix@C=30pt@R=20pt@!0{ & S \ar[ddl]_{{\eta}_S} \ar[ddr]^{S\eta}\\ \\ HS \ar[rr]^{{\rho}} & & SK }}\hspace{.5cm} \mathrm{and} \hspace{.5cm}  \vcenter{\xymatrix@C=20pt@R=20pt@!0{  & & HSK \ar[drr]^{{\rho}_K} & & \\ HHS \ar[urr]^{H{\rho}} \ar[ddr]_{{\mu}_S} & & & & SKK \ar[ddl]^{S{\mu}} \\ \\ & HS \ar[rr]^{{\rho}} & & SK &}}\end{equation} commute. The definitions in \S\ref{morawfssec} take several equivalent forms on account of the following result.

\begin{lem}[{Appelgate \cite{johnstoneadjoint}}]\label{appellem}
A lax monad morphism $(S,\rho)$ determines and is determined by a lift of $S$ to a functor $\mathbb{K}$-{\bf alg}$\to \mathbb{H}$-{\bf alg}.
\end{lem}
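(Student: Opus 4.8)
The plan is to establish a bijection between lax monad morphisms $(S,\rho)$ and functors $\tilde{S}\colon \mathbb{K}$-{\bf alg}$\to\mathbb{H}$-{\bf alg} satisfying $U^H\tilde{S}=SU^K$, where $U^H$ and $U^K$ denote the forgetful functors; the two passages are dual diagram chases of the standard Eilenberg--Moore type. I write the monads as $(H,\eta,\mu)$ and $(K,\eta,\mu)$, with $S$ a functor from the base of $K$ to the base of $H$.

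First, from a lax monad morphism to a lift. Given $(S,\rho)$ with $\rho\colon HS\To SK$, I would define $\tilde{S}$ to send a $K$-algebra $(b,\beta)$ to the pair $(Sb,\ S\beta\circ\rho_b)$ and a $K$-algebra morphism to its image under $S$. That $S\beta\circ\rho_b$ is a unital $H$-algebra structure follows from the left-hand triangle of (\ref{laxmor}) together with the unit axiom for $(b,\beta)$; associativity of this structure follows from naturality of $\rho$ at $\beta$, the associativity axiom for $(b,\beta)$, and the multiplication pentagon of (\ref{laxmor}). Naturality of $\rho$ shows $Sg$ is an $H$-algebra morphism whenever $g$ is one, so $\tilde{S}$ is a well-defined functor, and $U^H\tilde{S}=SU^K$ holds by construction.

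Conversely, given a lift $\tilde{S}$ I would recover $\rho$ by probing $\tilde{S}$ on the \emph{free} $K$-algebras. Writing $F^K b=(Kb,\mu_b)$, the object $\tilde{S}F^K b$ has underlying object $SKb$ and therefore carries an $H$-algebra structure $\theta_b\colon HSKb\to SKb$, natural in $b$; I set $\rho_b:=\theta_b\circ HS\eta_b$. The unit triangle of (\ref{laxmor}) for this $\rho$ follows from naturality of $\eta$ (for $H$) and the unit axiom for the $H$-algebra $(SKb,\theta_b)$; the multiplication pentagon follows from the associativity axiom for the $\theta_b$, naturality of the monad data of $H$, the monad unit law for $K$, and the fact that $\tilde{S}$ carries the $K$-algebra morphism $\mu_b\colon F^K Kb\to F^K b$ to an $H$-algebra morphism.

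Finally I would verify the two round trips. Passing from $(S,\rho)$ to $\tilde{S}$ and back returns $S\mu_b\circ\rho_{Kb}\circ HS\eta_b$, which equals $\rho_b$ by naturality of $\rho$ at $\eta_b$ and the monad unit law $\mu_b\circ K\eta_b=\id$. Passing from a lift $\tilde{S}$, with structure map $\hat{\beta}$ on $\tilde{S}(b,\beta)$, to $\rho$ and back: since $\beta\colon F^K b\to(b,\beta)$ is a $K$-algebra morphism, $S\beta$ is an $H$-algebra morphism $(SKb,\theta_b)\to(Sb,\hat{\beta})$, so $S\beta\circ\theta_b=\hat{\beta}\circ HS\beta$; composing with $HS\eta_b$ and using $\beta\circ\eta_b=\id$ gives $S\beta\circ\rho_b=\hat{\beta}$, so the reconstructed functor agrees with $\tilde{S}$ on objects, hence on morphisms since a functor lifting $S$ is determined on morphisms by faithfulness of $U^H$. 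I expect the only real work to be the bookkeeping in the two pentagon verifications---aligning each use of naturality with the correct algebra axiom---but no idea beyond the free-algebra argument is required.
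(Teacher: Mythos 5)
Your proposal is correct and its forward direction is exactly the paper's construction (assigning $(Sb,\,S\beta\circ\rho_b)$), which is all the paper spells out before citing Appelgate--Johnstone; your converse via the structure maps of the free $K$-algebras, $\rho_b:=\theta_b\circ HS\eta_b$, together with the two round-trip checks, is the standard argument behind that citation. So this is essentially the same approach, just carried out in full detail.
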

\begin{proof}
The $\mathbb{H}$-algebra structure assigned to the image of a $\mathbb{K}$-algebra $t \colon Kx \to x$ under $S$ is \[\xymatrix{ HSx \ar[r]^{\r_x} & SKx \ar[r]^{St} & Sx} \qedhere\]
\end{proof}

The dual notion, a \emph{colax monad morphism}, is a pair $(S,\rho)$ satisfying diagrams analogous to (\ref{laxmor}) but with the direction of $\r$ reversed. 

\begin{lem}\label{mateslem} Suppose $(S,\rho)$ is a lax monad morphism, $T \dashv S$, and $\l$ is the mate of $\r$ with respect to this adjunction. Then $(T,\l)$ is a colax monad morphism.
\end{lem}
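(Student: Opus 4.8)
The plan is to obtain the lemma as a formal consequence of the calculus of mates, concretely of Theorem~\ref{dblmatesthm}. The first observation is that all of the natural transformations in play may be regarded as squares in $\R${\bf adj}: the units $1\To H$, $1\To K$ and multiplications $HH\To H$, $KK\To K$ of the two monads are squares both of whose vertical edges are identity adjunctions, while $\r\colon HS\To SK$ is a square both of whose vertical edges are the adjunction $T\dashv S$. In these terms the two diagrams~(\ref{laxmor}) expressing that $(S,\r)$ is a lax monad morphism become two equalities between horizontal and vertical pasting composites of such squares. Dually, the two diagrams expressing that $(T,\l)$ is a colax monad morphism are the analogous pasting equalities in $\L${\bf adj}, built from the very same monad-structure squares together with the square $\l\colon TH\To KT$ over $T\dashv S$.

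The key step is then a single application of Theorem~\ref{dblmatesthm}: the mates correspondence is a double functor $\R${\bf adj}$ \to \L${\bf adj}, so it carries every pasting composite to the pasting composite of the mates of its constituent squares. By Example~\ref{trivialmatesex} it fixes each of the monad-structure squares --- a natural transformation is its own mate with respect to the identity adjunctions --- and by construction it carries $\r$, regarded as a square over $T\dashv S$, to its mate $\l$, again over $T\dashv S$. Applying it to the two pasting equalities~(\ref{laxmor}) therefore yields precisely the two pasting equalities that say $(T,\l)$ is a colax monad morphism. (The change of direction in passing from $\r\colon HS\To SK$ to $\l\colon TH\To KT$, along with the exchange of the roles of the two monads, is automatic: it merely records that $T$ points oppositely to $S$.)

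The step that requires genuine care, and where an error would be easiest to make, is the translation carried out in the first paragraph: one must draw each of the four diagrams honestly as a pasting of squares and check that whiskering a monad-structure square by $S$ on the $\R${\bf adj} side matches whiskering it by $T$ on the $\L${\bf adj} side --- equivalently, that the sources and targets of the whiskered natural transformations line up across the correspondence (for instance, the unit $1\To H$ of $H$ whiskered by $S$ on the right corresponds to it whiskered by $T$ on the left). Once the pasting diagrams are set up correctly this is forced by functoriality of mates.

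If one prefers to avoid assembling this double-categorical dictionary, the two colax axioms can instead be checked directly from the formula for $\l$ as the mate of $\r$ recorded in~(\ref{matesformula}): the unit axiom unwinds using naturality, the unit triangle of~(\ref{laxmor}) whiskered by $T$, and a triangle identity of $T\dashv S$, while the multiplication axiom is a longer but completely parallel diagram chase invoking the pentagon of~(\ref{laxmor}) in place of the triangle.
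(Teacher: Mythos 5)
Your proposal is correct and follows essentially the same route as the paper: both arguments view the monad structure maps as squares over identity adjunctions (Example \ref{trivialmatesex}) and $\rho$ as a square over $T \dashv S$, then apply the functoriality of the mates correspondence (Theorem \ref{dblmatesthm}) to transfer the pasting equalities \eqref{laxmor} from $\R${\bf adj} to $\L${\bf adj}. The only difference is cosmetic: the paper writes out the pentagon and leaves the triangle as an exercise, whereas you treat both uniformly and also sketch an optional direct chase via \eqref{matesformula}.
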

\begin{proof} We show $(T,\l)$ satisfies the pentagon and leave the triangle as an exercise. The pentagon for $(S,\rho)$ says that the left pasted squares 
\[\raisebox{35pt}{\xymatrix{\cdot \ar[r]^{HH} \ar@{}[dr]|{\mu \Searrow}& \cdot \\ \cdot \ar[u]^1\ar[r]_H \ar@{}[dr]|{\r \Searrow} & \cdot \ar[u]_1 \\ \cdot \ar[u]^S \ar[r]_K & \cdot \ar[u]_S}} =  \raisebox{35pt}{\xymatrix{ \cdot \ar[r]^H \ar@{}[dr]|{\r \Searrow} & \cdot \ar[r]^H \ar@{}[dr]|{\r \Searrow} & \cdot \\ \cdot \ar[u]^S \ar[r]_K \ar@{}[drr]|{\mu \Searrow} & \cdot \ar[r]_K \ar[u]^S & \cdot \ar[u]_S \\ \cdot \ar[u]^1 \ar[rr]_K & & \cdot \ar[u]_1}}\quad\quad\raisebox{35pt}{\xymatrix{\cdot \ar[r]^{HH} \ar@{}[dr]|{\mu \Swarrow} \ar[d]_1& \cdot \ar[d]^1 \\ \cdot \ar[d]_T \ar[r]_H \ar@{}[dr]|{\l \Swarrow} & \cdot \ar[d]^T \\ \cdot  \ar[r]_K & \cdot }} =  \raisebox{35pt}{\xymatrix{ \cdot \ar[r]^H  \ar[d]_T \ar@{}[dr]|{\l \Swarrow} & \cdot \ar[d]_T \ar[r]^H \ar@{}[dr]|{\l \Swarrow} & \cdot \ar[d]^T \\ \cdot \ar[d]_1 \ar[r]_K \ar@{}[drr]|{\mu \Swarrow} & \cdot \ar[r]_K  & \cdot \ar[d]^1 \\ \cdot \ar[rr]_K & & \cdot }}
\]
are equal in $\R${\bf adj}. By Theorem \ref{dblmatesthm} the pasted composites of their mates in $\L${\bf adj}, displayed on the right above, also agree.
\end{proof}

Of course, analogous results hold with any 2-category in place of $\CAT$; Theorem \ref{dblmatesthm} asserts that the functors $\L${\bf Adj}$,\R${\bf Adj}$\colon$2-$\CAT \rightrightarrows {\bf DblCAT}$ are isomorphic.

\subsection{Parameterized mates}\label{paramatesssec}

By a lemma below, in the context of a two-variable adjunction, or more generally a parameterized adjunction, the mates correspondences for the adjunctions obtained by fixing the parameter are natural in the parameter. This means that the two sets of mates assemble into natural transformations of two variables. We say that natural transformations corresponding in this way are \emph{parameterized mates}. We are not aware if this correspondence has been studied before, but it is essential to describe the interactions between awfs and two-variable adjunctions. The following lemmas establish the bare bones of this theory. 

First, we prove that if we fix one of the variables in a natural transformation between bifunctors which are pointwise adjoints and then take mates, the resulting \emph{pointwise mates} assemble to give a natural transformation between the appropriate bifunctors.

\begin{lem}\label{natmateslem} Suppose given a pair of left-closed bifunctors $\otimes, \otimes'$; ordinary functors $K, M, N$; and a natural transformation $\l_{k,m} \colon Kk \otimes' Mm \to N(k \otimes m)$ as displayed
\[\xymatrix{ \K \times \M \ar[d]_{\otimes} \ar[r]^-{K \times M} \ar@{}[dr]|{\l\, \Swarrow} & \K' \times \M' \ar[d]^{\otimes'} \\ \N \ar[r]_N & \N}\] Let $\r_{k,-}$ denote the mate of the natural transformation $\l_{k,-}$ with respect to the adjunctions $k \otimes - \dashv \hom(k,-)$ and $Kk \otimes' - \dashv \hom'(Kk,-)$. Then the $\r_{k,-}$ are also natural in $\K$ and assemble into a natural transformation $\r_{k,n} \colon M\hom(k,n) \to \hom'(Kk, Nn)$ \[\xymatrix{ \M \ar[r]^M \ar@{}[dr]|{\Searrow\, \rho} & \M' \\
\K^\op \times \N \ar[u]^{\hom} \ar[r]_-{K \times N} & \K'^\op \times \N' \ar[u]_{\hom'}}\]
\end{lem}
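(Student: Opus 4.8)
The plan is to reduce everything to the classical naturality of the mates correspondence (Theorem~\ref{dblmatesthm}) applied pointwise, and then observe that the pointwise data glues. First I would fix the goal precisely: we already know each $\r_{k,-}$ is a natural transformation of the single remaining variable (it is the ordinary mate of $\l_{k,-}$), so what must be checked is naturality in $k$, i.e.\ that for every $g\colon k'\to k$ in $\K$ the square
\[\xymatrix{ M\hom(k,n) \ar[r]^-{\r_{k,n}} \ar[d]_{M\hom(g,n)} & \hom'(Kk, Nn) \ar[d]^{\hom'(Kg, Nn)} \\ M\hom(k',n) \ar[r]_-{\r_{k',n}} & \hom'(Kk', Nn)}\]
commutes, naturally in $n$. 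Equivalently, bundling the $n$-variable in, one wants the equality of two natural transformations $M\hom(-,-)\Rightarrow \hom'(K-,N-)$ of two variables; since $\N'$ is determined by its single-variable functors and morphisms between them (cartesian closedness of $\CAT$, as flagged in \S\ref{paramatesssec}), it suffices to prove the displayed square for each fixed $g$.

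The key step is to exhibit each side of that square as a pasting of squares in $\R${\bf adj} and then read off, via Theorem~\ref{dblmatesthm}, the corresponding pasting in $\L${\bf adj}, which will be a manifestly commuting diagram built from $\l$ and the naturality squares for $\otimes$. Concretely: the morphism $g\colon k'\to k$ induces, by Example~\ref{paramatesex} (the parameterized-adjunction version), a pair of mate squares --- one with the left adjoints $k\otimes - \Rightarrow k'\otimes -$ and one with the right adjoints $\hom(k,-)\Rightarrow\hom(k',-)$ --- and similarly for $Kg$ on the primed side. Pasting the $\l_{k,-}$ square above the $K g$-induced right-adjoint square (in $\R${\bf adj}) computes $\hom'(Kg,N-)\cdot\r_{k,-}$; pasting the $g$-induced right-adjoint square above $\l_{k',-}$ computes $\r_{k',-}\cdot M\hom(g,-)$. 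Taking mates of both pastings lands us in $\L${\bf adj}, where the first becomes $(k'\otimes-\Rightarrow$-via-$g)$ pasted under $\l_{k,-}$ and the second becomes $\l_{k',-}$ pasted under the same $g$-induced left-adjoint square; these two are equal precisely by the naturality of $\l$ in its first variable (i.e.\ $\l$ is a genuine natural transformation of bifunctors, so its naturality square for $g\times\id$ commutes). By the functoriality half of Theorem~\ref{dblmatesthm}, equality in $\L${\bf adj} forces equality of the mates in $\R${\bf adj}, which is exactly the square above.

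I would then assemble: having verified the $k$-naturality square for every $g$ and every $n$, and knowing the $n$-naturality square holds because $\r_{k,-}$ is an honest mate, the interchange of the two families of naturality squares (which is automatic, since both are instances of naturality of a single $\CAT$-valued assignment) yields a well-defined natural transformation $\r$ of two variables, $\r_{k,n}\colon M\hom(k,n)\to\hom'(Kk,Nn)$, as claimed. I expect the only real obstacle to be bookkeeping: setting up the pasting diagrams so that the domains and codomains of the squares match and the directions of the adjunctions are consistent, so that Theorem~\ref{dblmatesthm} applies cleanly; the mathematical content is entirely ``naturality of mates plus naturality of $\l$,'' but one must be careful that the parameterized version of Example~\ref{paramatesex} is being invoked with the correct variances ($\K^\op$ appearing in the codomain diagram) and that the ambient $2$-category for Theorem~\ref{dblmatesthm} is taken to be $\CAT$ itself.
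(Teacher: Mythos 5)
Your proposal is correct and is essentially the paper's own argument: express naturality of $\l$ in the $\K$-variable as an equality of pasted squares and transfer it across the double-categorical mates correspondence of Theorem \ref{dblmatesthm} (using the mate squares of Example \ref{paramatesex}) to obtain naturality of the $\r_{k,-}$ in $\K$. The only difference is presentational—you start from the $\R$\textbf{adj} side and pass to $\L$\textbf{adj}, while the paper pastes in $\L$\textbf{adj} and pushes forward—which is immaterial since the correspondence is an isomorphism of double categories.
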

\begin{proof} Naturality of $\l$ in $\K$ says that for any $f \colon k' \to k$ in $\K$, the pasted composites 
\[\xymatrix@C=50pt{\M \ar[d]_{k \otimes-} \ar[r]^1 \ar@{}[dr]|{{}^{f\otimes-}\Swarrow}& \M  \ar[d]|{k'\otimes -} \ar@{}[dr]|{{}^{\l_{k',-}}\Swarrow} \ar[r]^M & \M' \ar[d]|{Kk' \otimes'-} \\ \N \ar[r]_1   & \N \ar[r]_N  & \N'} = \xymatrix@C=50pt{\M \ar[d]|{k'\otimes-} \ar[r]^M \ar@{}[dr]|{{}^{\l_{k,-}}\Swarrow} & \M' \ar[d]|{Kk \otimes'-} \ar[r]^1 \ar@{}[dr]|{{}^{Kf\otimes'-}\Swarrow}& \M' \ar[d]|{Kk'\otimes'-} \\ \N \ar[r]_N  & \N' \ar[r]_1 & \N' }\]
are equal. By Theorem \ref{dblmatesthm}, the pasted composites
\[\xymatrix@C=45pt{\M \ar[r]^1 \ar@{}[dr]|{\Searrow^{\hom(f,-)}}& \M  \ar@{}[dr]|{\Searrow^{\r_{k',-}}} \ar[r]^M & \M'  \\ \N \ar[r]_1  \ar[u]|(.35){\hom(k,-)} & \N \ar[r]_N \ar[u]|(.3){\hom(k',-)} & \N'\ar[u]|(.35){\hom'(Kk',-)}} = \xymatrix@C=45pt{\M \ar[r]^M \ar@{}[dr]|{\Searrow^{\r_{k,-}}} & \M' \ar[r]^1 \ar@{}[dr]|{\Searrow^{\hom'(Kf,-)}}& \M' \\ \N \ar[r]_N  \ar[u]|(.35){\hom(k,-)}& \N' \ar[r]_1 \ar[u]|(.3){\hom'(Kk,-)} & \N' \ar[u]|(.35){\hom'(Kk',-)} }\] are also equal, which says that the $\r_k$ are natural in $\K$.
\end{proof}

The following lemma establishes the \emph{parameterized mates correspondence}. 

\begin{lem}\label{paramateslem} Given two-variable adjunctions $(\otimes,\homl,\homr)$, $(\otimes',\homl',\homr')$ and functors $K, M, N$ as below, there is a bijective correspondence between natural transformations 
\[\xymatrix@C=20pt{ \K \times \M \ar[d]_{\otimes} \ar[r]^-{K \times M} \ar@{}[dr]|{\l\,\Swarrow} & \K' \times \M' \ar[d]^{\otimes'} \\ \N \ar[r]_N & \N} \xymatrix@C=20pt{ \M \ar[r]^M \ar@{}[dr]|{\Searrow\, \rho^\ell} & \M' \\
\K^\op \times \N \ar[u]^{\homl} \ar[r]_-{K^\op \times N} & \K'^\op \times \N' \ar[u]_{\homl'}} \xymatrix@C=20pt{ \K \ar[r]^K \ar@{}[dr]|{\Searrow\, \rho^r} & \K' \\
\M^\op \times \N \ar[u]^{\homr} \ar[r]_-{M^\op\times N} & \M'^\op \times \N' \ar[u]_{\homr'}}  \]
obtained by applying the pointwise mates correspondence to either variable. 
\end{lem}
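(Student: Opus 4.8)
The plan is to reduce the lemma to the single-variable mates correspondence of \S\ref{matesssec} together with Lemma \ref{natmateslem}, applied once to each argument of the two-variable adjunctions. First I would fix an object $k \in \K$: freezing the first argument of $(\otimes,\homl,\homr)$ produces an ordinary adjunction $k \otimes - \dashv \homl(k,-)$, and by \cite[IV.7.3]{maclanecategories} (see Example \ref{paramatesex}) these adjunctions fit together into a parameterized adjunction, so that their units and counits are natural in $k$; the same holds for $(\otimes',\homl',\homr')$. Hence for each $k$ one may form the mate $\rho^\ell_{k,-}$ of $\l_{k,-}$ with respect to $k \otimes - \dashv \homl(k,-)$ and $Kk \otimes' - \dashv \homl'(Kk,-)$ via the formulas (\ref{matesformula}), and for fixed $k$ this assignment is a bijection. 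Now a natural transformation $\l$ as in the first displayed square is exactly a family $(\l_{k,-})_{k \in \K}$ of natural transformations that is itself natural in $k$, and likewise a natural transformation $\rho^\ell$ as in the second square is exactly a family $(\rho^\ell_{k,-})_k$ natural in $k$ --- natural over $\K^\op$, since $\homl$ is contravariant in its first argument. So it remains only to see that the pointwise-mate bijections respect naturality in $k$ in both directions. The forward direction is Lemma \ref{natmateslem}. For the converse, the proof of that lemma rewrites naturality in $k$ of the family $(\l_{k,-})$ as an equation of pasting composites in $\L${\bf adj} and naturality in $k$ of $(\rho^\ell_{k,-})$ as the corresponding equation in $\R${\bf adj}; since $\L${\bf adj}$\cong\R${\bf adj} is an isomorphism of double categories (Theorem \ref{dblmatesthm}), one of these equations holds if and only if the other does. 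Therefore $\l \mapsto \rho^\ell$ is a bijection between natural transformations of bifunctors.

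Next I would run the same argument on the other variable. Freezing the second argument produces adjunctions $- \otimes m \dashv \homr(m,-)$ and $- \otimes' Mm \dashv \homr'(Mm,-)$; applying Lemma \ref{natmateslem} to the bifunctors $(m,k) \mapsto k \otimes m$ and $(m,k) \mapsto Kk \otimes' Mm$ shows that the pointwise mates $\rho^r_{-,m}$ of $\l_{-,m}$ are natural in $m$, and the same reversibility via Theorem \ref{dblmatesthm} gives that $\l \mapsto \rho^r$ is a bijection. Composing the two bijections through the common middle term $\l$ yields the bijection $\rho^\ell \leftrightarrow \rho^r$, completing the three-way correspondence.

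The main obstacle is not any real difficulty but the bookkeeping needed to make the two pasting arguments fit the hypotheses that are actually available: one must check that freezing a variable of a two-variable adjunction produces precisely the configuration of functors, adjunctions, and 2-cells to which Example \ref{paramatesex} and Lemma \ref{natmateslem} apply; that the contravariance of $\homl$ and $\homr$ in their first slots turns covariant naturality of $\l$ there into naturality of $\rho^\ell$ and $\rho^r$ over the opposite category; and that the resulting 2-cells $\rho^\ell$, $\rho^r$ point in the right-adjoint ($\Searrow$) direction shown in the statement. Beyond these compatibilities no coherence is involved: the entire correspondence is a formal consequence of the Kelly--Street isomorphism $\L${\bf adj}$\cong\R${\bf adj}.
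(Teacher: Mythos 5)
Your two bijections $\lambda \leftrightarrow \rho^\ell$ and $\lambda \leftrightarrow \rho^r$ are fine---each is Lemma \ref{natmateslem} in one direction plus the observation that the argument reverses through Theorem \ref{dblmatesthm}---but the step where you ``complete the three-way correspondence'' by composing these two bijections through $\lambda$ is exactly where the actual content of the lemma is being assumed rather than proved. There are three pointwise-mates passages available, not two: besides fixing $k \in \K$ (adjunctions $k \otimes - \dashv \homl(k,-)$) and fixing $m \in \M$ (adjunctions $- \otimes m \dashv \homr(m,-)$), one can fix $n \in \N$ and mate across the adjunction between $\homl(-,n) \colon \K^\op \to \M$ and $\homr(-,n) \colon \M^\op \to \K$ furnished by the third leg of \eqref{twovarhomset}; this passes directly between $\rho^\ell$ and $\rho^r$ without going through $\lambda$. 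The phrase ``obtained by applying the pointwise mates correspondence to either variable'' asserts that the correspondence is independent of which variable one mates along, i.e., that this direct $\N$-variable passage coincides with the composite of the other two. Your proposal never mentions the $\N$-variable adjunction, and defining the $\rho^\ell \leftrightarrow \rho^r$ bijection to be the composite does not establish that coincidence; it merely relabels it.

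That compatibility is the whole point of the lemma, and it is what the paper's proof addresses: by symmetry it reduces to showing that mating $\lambda$ first in the $\K$-variable to get $\rho^\ell$ and then in the $\N$-variable gives the same $\rho^r$ as mating $\lambda$ once in the $\M$-variable, which is then checked by a diagram chase using the explicit formulas \eqref{matesformula} together with the compatible hom-set isomorphisms \eqref{twovarhomset}. It is not a formal consequence of the Kelly--Street isomorphism applied separately in each variable; some genuine computation with the triple hom-set isomorphism is required (the paper's hint about erasing a unit followed by arrows in the image of the right adjoint is the engine of that chase). The consistency is also what gets used downstream: in Lemma \ref{paracomplem}, and again in the proofs of Theorems \ref{twovarcompthm} and \ref{existthm}, parameterized mateship of a triple is deduced from mateship verified in two \emph{different} single variables, which is legitimate only because the three pointwise correspondences agree. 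So the missing piece of your argument is precisely this coherence of the three mate correspondences, and supplying it would essentially reproduce the paper's proof.
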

\begin{proof}
By symmetry, it suffices to show that if we fix $\K$ and takes pointwise mates to define $\r^\ell$ from $\l$ and then fix $\N$ and take pointwise mates to define $\r^r$ from $\r^\ell$, the result is the same as fixing $\M$ and taking pointwise mates to define $\r^r$ from $\l$. This follows from the formulas (\ref{matesformula}), the compatible hom-set isomorphisms (\ref{enrichedhomset}) and a diagram chase. We leave the details as an exercise to the reader with the following hint: when in a sequence of composable arrows, one sees the unit followed by arrows in the image of the right adjoint, this asserts that the composite is adjunct to whatever remains when the unit and the right adjoint are erased. We made frequent use of this observation and its dual.
\end{proof}

A careful statement of the ``multi-functoriality'' of the parameterized mates correspondence, the appropriate analog of Theorem \ref{dblmatesthm}, involves category objects in the category of multicategories equipped with certain additional structure. This result will appear in a separate paper \cite{chenggurskiriehlparametrised}, joint work with Eugenia Cheng and Nick Gurski. For  present purposes, we only need a preliminary lemma in this direction.

\begin{lem}\label{paracomplem} Composition of parameterized mates in any of the three variables with ordinary mates pointing in compatible directions is well-defined.
\end{lem}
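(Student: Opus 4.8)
The plan is to reduce the statement to the ordinary calculus of mates---Theorem~\ref{dblmatesthm}---by fixing all of the variables not involved in the composition at hand, and then to transport the resulting equalities back up to the level of bifunctors using Lemma~\ref{natmateslem}.

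Recall from Lemma~\ref{paramateslem} that a parameterized mate is presented in three equivalent guises: the $\lambda$-picture built from $\otimes,\otimes'$, the $\rho^\ell$-picture built from $\homl,\homl'$, and the $\rho^r$-picture built from $\homr,\homr'$. One of the three base categories $\K$, $\M$, $\N$ serves as ``the variable to be composed in'' in each picture, appearing there with a definite variance: $\N$ occurs covariantly in all three pictures; $\K$ occurs covariantly in the $\lambda$- and $\rho^r$-pictures and contravariantly in the $\rho^\ell$-picture; $\M$ occurs covariantly in the $\lambda$- and $\rho^\ell$-pictures and contravariantly in the $\rho^r$-picture. To \emph{compose in the $\N$ variable} is to paste an ordinary mate square
\[\xymatrix{ \N \ar[d]_T \ar[r]^N \ar@{}[dr]|{\mu\,\Swarrow} & \N' \ar[d]^{T'} \\ \mathcal{P} \ar[r]_P & \mathcal{P}'}\]
built from adjunctions $T\dashv S$, $T'\dashv S'$, with ordinary mate $\nu$, into the $\N$-edge of each of the three pictures, producing pasted natural transformations $\mu\bullet\lambda$, $\nu\bullet\rho^\ell$, $\nu\bullet\rho^r$ of the shapes appearing in Lemma~\ref{paramateslem} for the appropriate composite functors. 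Composing in the $\K$ or $\M$ variable is identical, except that in the one picture where that variable is contravariant the ordinary adjunction must be reflected (its left adjoint becoming $S^\op$, its mate square $\mu$ becoming $\nu^\op$, and so on) before it is pasted in; the hypothesis that the mates \emph{point in compatible directions} is exactly what ensures that, after these reflections, all three pastings are legal composites in $\L${\bf adj}. The assertion of the lemma is that in every case the three pasted transformations once again form a parameterized mate triple, so that ``the composite parameterized mate'' is unambiguously defined.

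By Lemma~\ref{paramateslem} it suffices to check one of the pairwise mate relations---say that $\nu\bullet\rho^\ell$ arises from $\mu\bullet\lambda$ by fixing the $\K$-variable and taking pointwise mates---whereupon the others follow by symmetry and a second appeal to Lemma~\ref{paramateslem}. Fix $k\in\K$. Unwinding the definitions of Lemmas~\ref{natmateslem}--\ref{paramateslem}, the square $(\mu\bullet\lambda)_{k,-}$ is the vertical composite in $\L${\bf adj} of $\mu$ beneath $\lambda_{k,-}$, formed along the parameterized adjunctions $k\otimes-\dashv\homl(k,-)$ and $Kk\otimes'-\dashv\homl'(Kk,-)$ together with $T\dashv S$ and $T'\dashv S'$. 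By Theorem~\ref{dblmatesthm} its mate is the vertical composite in $\R${\bf adj} of $\nu$ beneath the mate of $\lambda_{k,-}$; but by construction the mate of $\lambda_{k,-}$ is $\rho^\ell_{k,-}$, so this composite is $(\nu\bullet\rho^\ell)_{k,-}$. Thus $\nu\bullet\rho^\ell$ agrees at every $k\in\K$ with the family of pointwise mates of $(\mu\bullet\lambda)_{k,-}$; since that family assembles into a natural transformation of bifunctors by Lemma~\ref{natmateslem} applied to $\mu\bullet\lambda$, and $\nu\bullet\rho^\ell$ is already such a natural transformation, the two are equal. The main---indeed the only---obstacle is the bookkeeping recorded in the previous paragraph: cataloguing, for an ordinary mate square placed on each of $\K$, $\M$ and $\N$, which of the three pictures it enters, with which variance and which $2$-cell orientation, so as to make precise what ``pointing in compatible directions'' requires. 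Once that is done, every case collapses to the two-line argument above (fix the parameter, invoke Theorem~\ref{dblmatesthm}, reassemble by Lemma~\ref{natmateslem}), with no diagram chase beyond those already performed for Lemmas~\ref{natmateslem} and~\ref{paramateslem}.
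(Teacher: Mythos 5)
Your proof is correct and takes essentially the same approach as the paper's: fix the parameter, apply Theorem \ref{dblmatesthm} to the pasted rectangles, use Lemma \ref{natmateslem} to get naturality in the remaining variable, and conclude with Lemma \ref{paramateslem}. The only cosmetic difference is that the paper explicitly verifies two pairwise mate relations (pairing the pasted $\lambda$ with the pasted $\rho^r$, and the pasted $\rho^r$ with the pasted $\rho^\ell$) where you check one and invoke symmetry for the rest, which amounts to the same argument.
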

\begin{proof} Suppose $\l$, $\r^\ell$, $\r^r$ are parameterized mates as in Lemma \ref{paramateslem}, and suppose $\a$ and $\b$ are mates with respect to the top squares of the following diagram in $\L${\bf adj}$\cong \R${\bf adj}. 
\[\xymatrix@!0@R=45pt@C=70pt{\J \ar@<-.75ex>[d]_-T \ar@{}[d]|-{\dashv} \ar[r]^J & \J'\ar@<-.75ex>[d]_-{T'} \ar@{}[d]|-{\dashv} &\J \ar@<-.75ex>[d]_-T \ar@{}[d]|-{\dashv} \ar[r]^J & \J'\ar@<-.75ex>[d]_-{T'} \ar@{}[d]|-{\dashv}\\ \K \ar@<-.75ex>[u]_-S \ar[r]_K \ar@<-.75ex>[d]_(.4){-\otimes m} \ar@{}[d]|(.4){\dashv} & \K' \ar@<-.75ex>[u]_-{S'} \ar@<-.75ex>[d]_(.6){-\otimes' Mm} \ar@{}[d]|(.6){\dashv}  &\K \ar@<-.75ex>[u]_-S \ar[r]_K \ar@<-.75ex>[d]_(.4){\homl(-,n)} \ar@{}[d]|(.4){\dashv} & \K' \ar@<-.75ex>[u]_-{S'} \ar@<-.75ex>[d]_(.6){\homl'(-,Nn)} \ar@{}[d]|(.6){\dashv}\\ \N \ar[r]_{N} \ar@<-.75ex>[u]_(.6){\homr(m,-)} & \N' \ar@<-.75ex>[u]_(.4){\homr'(Mm,-)} & \M^\op \ar@<-.7ex>[u]_(.6){\homr(-,n)} \ar[r]_M & \M'^\op \ar@<-.75ex>[u]_(.4){\homr'(-,Nn)} }\]
Applying Theorem \ref{dblmatesthm} and Lemma \ref{natmateslem} to the left-hand rectangle, we see that 
\begin{align*} &\xymatrix{ T'J \otimes' M \ar[r]^-{\a \otimes' 1} & KT \otimes' M \ar[r]^-{\l_{T,1}} & N(T \otimes -)} \quad \quad \mathrm{and} \\ &\xymatrix{JS\homr(-,-) \ar[r]^-{\b_{\homr}} & S'K\homr(-,-) \ar[r]^-{S'\r^r} & S'\homr'(M,N)}\\ \intertext{are mates; from the right-hand rectangle, we see that this second natural transformation and} &\xymatrix@C=30pt{ M\homl(T,-) \ar[r]^-{\r^\ell_{T,1}} & \homl'(KT,N) \ar[r]^-{\homl'(\a,N)} & \homl'(T'J,N)}\end{align*} are mates. By Lemma \ref{paramateslem}, the three composite natural transformations are parameterized mates.
\end{proof}

As a consequence, algebraic Quillen two-variable adjunctions pointing in the direction of the left adjoints can be composed in any of their variables with algebraic Quillen adjunctions pointing also in the direction of the left adjoints; see Lemma \ref{complem}.

\section{Preliminaries on algebraic weak factorization systems}\label{prelimsec}

We briefly review a few key topics: lifting properties, weak factorization systems, functorial factorizations, algebraic weak factorization systems, and the algebraic small object argument.

\subsection{Weak factorization systems}\label{wfsssec}

We write $\1, \2, \3, \4$ for the categories assigned to these ordinals; e.g., $\2$ is the ``walking arrow'' category, $\3$ is the free category containing a composable pair of arrows, and so on. The functor category $\M^\2$ is the category whose objects are arrows in $\M$, depicted vertically, and whose morphisms $(u,v) \colon f \To g$ are commutative squares \begin{equation}\label{square}\xymatrix{ \cdot \ar[d]_f \ar[r]^u & \cdot \ar[d]^g \\ \cdot \ar[r]_v & \cdot}\end{equation} Any such square presents a \emph{lifting problem} of $f$ against $g$; a solution would be an arrow from the bottom left to the upper right such that both resulting triangles commute. If every lifting problem presented by a morphism $f \To g$ has a solution, we say that $f$ has the \emph{left lifting property} against $g$ and, equivalently, that $g$ has the \emph{right lifting property} against $f$.

\begin{defn}[I.2.3, I.2.4]\label{defn:wfs} A \emph{weak factorization system} $(\mathcal{L},\mathcal{R})$ on $\M$ consists of two classes of morphisms such that \begin{myindentpar}{5em} \begin{itemize}
\item[(factorization)] every arrow of $\M$ can be factored as an arrow of $\mathcal{L}$ followed by an arrow of $\mathcal{R}$ \item[(lifting)] every lifting problem (\ref{square}) with $f \in \mathcal{L}$ and $g \in \mathcal{R}$ has a solution \item[(closure)] every arrow with the left lifting property against every arrow in $\mathcal{R}$ is in $\mathcal{L}$ and every arrow with the right lifting property against every arrow of $\mathcal{L}$ is in $\mathcal{R}$. \end{itemize}\end{myindentpar} In the presence of the first two axioms, the third can be replaced by \begin{myindentpar}{5em}\begin{itemize} \item[(closure$'$)] the classes $\mathcal{L}$ and $\mathcal{R}$ are closed under retracts \end{itemize}\end{myindentpar} by the so-called ``retract argument'' familiar from the model category literature.
\end{defn}

Adopting standard notation \begin{align*} \mathcal{L}^\boxslash &= \{ g \in \M^\2 \mid g\ \mathrm{has~the~right~lifting~property~against~all}\ f \in \mathcal{L}\} \\ {}^{\boxslash}\mathcal{R} &=\{ f \in \M^\2 \mid f\ \mathrm{has~the~left~lifting~property~against~all}\ g \in \mathcal{R}\} \end{align*} the lifting and closure axioms combine to assert that $\mathcal{R}= \mathcal{L}^\boxslash$ and $\mathcal{L} = {}^{\boxslash}\mathcal{R}$. In particular, it is clear that either class determines the other. For any class of morphisms $\cR$, the class ${}^\boxslash\cR$ is closed under coproducts, pushouts, (transfinite) composition, retracts, and contains the isomorphisms: precisely the familiar closure properties for the cofibrations in a model category. 

We will now ``categorify'' the notation just introduced.

\begin{defn}[I.2.25]\label{Jboxslashdefn} If $\J \to \M^\2$ is some subcategory of arrows, write $\J^\boxslash$ for the category whose objects are pairs $(f, \phi_f)$, where $f \in \M^\2$ and $\phi_f$ is a \emph{lifting function} that specifies a solution \[\xymatrix{ \cdot \ar[d]_j \ar[r]^a & \cdot \ar[d]^f \\ \cdot \ar[r]_b \ar@{-->}[ur]|{\phi_f(j,a,b)} & \cdot}\] to any lifting problem against some $j \in \J$ in such a way that the specified lifts commute with morphisms in $\J$. A morphism $(f,\phi_f) \to (g,\phi_g)$ is a morphism $f \To g$ in $\M^\2$ commuting with the chosen lifts.
\end{defn}

When $\J$ is discrete, the set of objects in the image of the forgetful functor $\J^\boxslash \to \M^\2$ is precisely the set $\J^\boxslash$ defined above. The category ${}^\boxslash\J$ is defined dually.

A \emph{functorial factorization} on $\M$ is a section $\vec{E} \colon \M^\2 \to \M^\3$ of the ``composition'' functor $\M^\3 \to \M^\2$; $\vec{E}$ is often described by a pair of functors $L,R \colon \M^\2 \rightrightarrows \M^\2$ whose respective codomain and domain define a common functor $E \colon \M^\2 \to \M$, as depicted below \begin{equation}\label{funfact}  \raisebox{.25in}{\xymatrix{ \cdot \ar[r]^u \ar[d]_f & \cdot \ar[d]^g \\ \cdot \ar[r]_v & \cdot}} \quad \stackrel{\vec{E}}{\mapsto}\quad \raisebox{.52in}{\xymatrix{\cdot \ar[r]^u \ar[d]_{Lf} & \cdot \ar[d]^{Lg} \\ Ef \ar[r]^{E(u,v)} \ar[d]_{Rf} & Eg \ar[d]^{Rg} \\ \cdot \ar[r]_v & \cdot}} \quad \stackrel{\circ}{\mapsto} \quad \raisebox{.25in}{\xymatrix{ \cdot \ar[r]^u \ar[d]_f & \cdot \ar[d]^g \\ \cdot \ar[r]_v & \cdot}}\end{equation} 

\begin{ntn}Throughout, the vector notation is used to decorate functors and natural transformations on diagram  categories whose essential data is described by one component; e.g., $E$ contains all the data of the action of $\vec{E}$ on morphisms.
\end{ntn}

\subsection{Algebraic weak factorization systems}\label{awfsssec}

The endofunctors $L, R$ arising from a functorial factorization $\vec{E}$ are equipped with canonical natural transformations $\vec{\e} \colon L \To 1$, $\vec{\eta} \colon 1 \To R$ whose components  are rearrangements of the functorial factorization; cf.~\S I.2.3.  A functorial factorization gives rise to an algebraic weak factorization system when this data can be extended to a compatible comonad and monad.

\begin{defn}[{Grandis-Tholen \cite{gtnatural}, Garner \cite{garnerunderstanding}}] An \emph{algebraic weak factorization system} $(\L,\R)$ on a category $\M$ consists of a comonad $\L = (L,\vec{\e},\vec{\d})$ and a monad $\R = (R,\vec{\eta}, \vec{\mu})$ arising from a functorial factorization and such that $(\d,\mu) \colon LR \To RL$ is a distributive law. 
\end{defn}

The functorial factorization of an algebraic weak factorization system determines an \emph{underlying weak factorization system} whose left and right classes are the retract closures of the classes of maps admitting $\L$-coalgebra and $\R$-algebra structures respectively. The comultiplication for the comonad and multiplication for the monad ensure that left and right factors are themselves $\L$-coalgebras and $\R$-algebras. Equivalently, the left and right classes consist of those maps that admit solutions to the lifting problems 
 \begin{equation}\label{eq:pointedstr} \xymatrix{ \dom f \ar[d]_f \ar[r]^{Lf} & Ef \ar[d]^{Rf}  & & \dom g \ar@{=}[r] \ar[d]_{Lg} & \dom g \ar[d]^g \\ \cod f \ar@{=}[r]  \ar@{-->}[ur]_s & \cod f & & Eg \ar[r]_{Rg}   \ar@{-->}[ur]^t& \cod g}\end{equation}  respectively; lifts precisely define (co)algebra structures for the pointed endofunctors $L$ and $R$. The (co)algebra structures of \eqref{eq:pointedstr} can be used to define  a canonical solution to any lifting problem in such a way that the canonical solution to the lifting problems posed in \eqref{eq:pointedstr} are $s$ and $t$:  \begin{equation}\label{chosenlift}\xymatrix@C=30pt{ \cdot \ar[r]^u \ar[d]_{Lf} & \cdot \ar[d]^{Lg}  \\ \cdot \ar@{-->}[r]^{E(u,v)} \ar[d]_{Rf} & \cdot \ar[d]^{Rg} \ar@<1ex>@{-->}[u]^t \\ \cdot \ar@<-1ex>@{-->}[u]_s \ar[r]_v & \cdot }\end{equation} In this way, all $\L$-coalgebras lift against all $\R$-algebras. Furthermore, morphisms of (co)algebras preserve the chosen solutions to lifting problems, defining functors \begin{equation}\label{eq:liftfunctor} \xymatrix{ \Lalg \ar[r]^-{\mathrm{lift}} & {}^\boxslash\Ralg & \Ralg \ar[r]^-{\mathrm{lift}} & \Lalg^\boxslash}\end{equation}

We call maps admitting $\L$-coalgebra structures \emph{cellular} to distinguish them from mere pointed endofunctor coalgebras. This distinction is classical: Quillen's original notion of model category did not include the closure axiom of Definition \ref{defn:wfs}, presumably because he wanted the cofibrations in his model structure on spaces to be what we'd term the cellular cofibrations: the relative cell complexes. One of the most interesting features of this work is the power of the cellularity condition illustrated by Theorems \ref{cgadjthm} and \ref{existthm} below.

\subsection{The algebraic small object argument}

The following theorem enables the theory of algebraic model categories. Here a category ``permits the small object argument'' if it is \emph{locally bounded}, a set theoretical condition developed by Freyd and Kelly that includes locally presentable categories as well as many categories of topological spaces \cite{freydkellycategories,kellylack}.

\begin{thm}[Garner \cite{garnerunderstanding}]\label{garnsthm} Suppose $\M$ permits the small object argument and $\J$ is any small category of arrows of $\M$. Then $\M$ has an awfs $(\L,\R)$ so that  there is \begin{myindentpar}{3em} \begin{itemize} \item[\emph{(I.2.26)}] a functor $\J \to \Lalg$ over $\M^\2$ universal among morphisms of awfs \item[\emph{(I.2.27)}] an isomorphism of categories $\Ralg \cong \J^\boxslash$ over $\M^\2$ \end{itemize}\end{myindentpar}
\end{thm}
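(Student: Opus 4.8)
The plan is to carry out Garner's ``algebraic'' refinement of Quillen's small object argument, in which the transfinite iteration is organised so as to produce a comonad and a monad, not merely a functorial factorisation, and then to read both universal properties off that construction. Everything is driven by a single ``step-one'' endofunctor: I would first produce a functorial factorisation on $\M$ whose right-hand functor is a pointed endofunctor $(R^1,\vec{\eta}^1)$ on $\M^\2$ and whose left-hand functor $L^1$ is copointed, arranged so that the category of algebras for the pointed endofunctor $R^1$ is exactly the category $\J^\boxslash$ of Definition~\ref{Jboxslashdefn}. Since a free monad on a pointed endofunctor has the same algebras as that pointed endofunctor, as soon as the free monad $\R$ on $R^1$ is known to exist --- which is precisely what the hypothesis that $\M$ is locally bounded buys --- we get $\Ralg\cong\J^\boxslash$ over $\M^\2$ for free, which is~(I.2.27); and~(I.2.26) will follow from the universal property of this free construction. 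The comonad $\L$ and the distributive law come out of the same iteration.

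For the step-one construction, fix $f\in\M^\2$ and let $\mathbf{Sq}_f$ be the category whose objects are commutative squares $(a,b)\colon j\To f$ with $j\in\J$ and whose morphisms are those induced by morphisms of $\J$; it carries two functors $\mathbf{Sq}_f\to\M$, sending such a square to $\dom j$ and to $\cod j$, related by a natural transformation with $j$-component. Take
\[\xymatrix{ \colim_{\mathbf{Sq}_f}\dom \ar[r]\ar[d] & \colim_{\mathbf{Sq}_f}\cod \ar[d] \\ \dom f \ar[r]_-{L^1f} & E^1f \ar[r]_-{R^1f} & \cod f}\]
a pushout, with the left vertical induced by the components $a$ and $R^1f$ induced by $f$ together with the components $b$. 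This is functorial in $f$ --- the colimits vary functorially, or one may present $E^1$ as a left Kan extension of the (co)domain functors along $\J\hookrightarrow\M^\2$ --- so it defines a functorial factorisation, with evident point $\vec{\eta}^1\colon 1\To R^1$ and copoint $\vec{\e}^1\colon L^1\To 1$. An algebra for the pointed endofunctor $R^1$ on $f$ is a square $(\sigma,1)\colon R^1f\To f$ splitting $\vec{\eta}^1_f$, and by the universal property of the pushout giving $\sigma$ is the same as giving, naturally in $\mathbf{Sq}_f$, a diagonal filler $\cod j\to\dom f$ for every square $(a,b)\colon j\To f$ --- that is, exactly a lifting function $\phi_f$ as in Definition~\ref{Jboxslashdefn}. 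Tracking morphisms identifies the category of $R^1$-algebras with $\J^\boxslash$ over $\M^\2$.

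Next I would run the iteration. Given a factorisation $f=R^nf\cdot L^nf$ through $E^nf$, apply the step-one construction to $R^nf$, but only along those squares $j\To R^nf$ that do not already factor through $E^{n-1}f$, to produce $E^{n+1}f$ with a natural map $E^nf\to E^{n+1}f$; at limit ordinals take colimits in $\M^\2$. Local boundedness of $\M$ guarantees that this chain stabilises at some ordinal $\beta$, uniformly in $f$, so that one may set $E=E^\beta$, $L=L^\beta$, $R=R^\beta$; the resulting $R$ then carries a canonical multiplication exhibiting it as the free monad on $R^1$, whence $\Ralg\cong\J^\boxslash$ over $\M^\2$, which is~(I.2.27). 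The step that really requires work --- and the one I expect to be the main obstacle --- is to check that $L$ is not merely copointed but underlies a comonad $\L$ and that $(\vec{\d},\vec{\mu})\colon LR\To RL$ is a distributive law, so that $(\L,\R)$ genuinely is an awfs. This is where the ``algebraic'' refinement earns its keep: indexing the step-one construction by the \emph{category} $\mathbf{Sq}_f$ rather than by a bare set of lifting problems is exactly what makes the two legs of the pushout compatible with re-applying the construction, and that compatibility is what supplies $\vec{\d}$ and the comultiplication pentagon --- a naive set-indexed construction yields only a copointed endofunctor.

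Finally,~(I.2.26). Each $j\in\J$ carries a canonical $\L$-coalgebra structure: the identity square $j\To j$, as an object of $\mathbf{Sq}_j$, yields a canonical map $\cod j\to E^1j\to Ej$, and one checks this is an $\L$-coalgebra structure on $j$ (in particular a section of the right factor $Rj$), its compatibility with $\vec{\d}$ holding by construction; naturality in $\J$ assembles these into a functor $\J\to\Lalg$ over $\M^\2$. For its universality, given any awfs $(\L',\R')$ on $\M$ together with a functor $\J\to\Lpalg$ over $\M^\2$, I would apply the lifting functor~\eqref{eq:liftfunctor} for $\L'$ to the canonical $\R'$-algebra structures on the right factors $R'f$; this turns the datum into a morphism of pointed endofunctors $R^1\To R'$, which --- since $\R$ is the free monad on $R^1$ --- extends uniquely to a monad morphism $\R\To\R'$ and thence to a morphism of awfs $(\L,\R)\to(\L',\R')$ (cf.\ \S\ref{morawfssec}). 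It then remains to check that this assignment is inverse to restriction along $\J\to\Lalg$ --- a diagram chase of exactly the kind the calculus of mates of \S\ref{matessec} is designed to organise --- which yields the stated universal property.
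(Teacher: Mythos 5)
This theorem is not proved in the paper at all---it is quoted from Garner \cite{garnerunderstanding} (and I.2.26--27 of \cite{riehlalgebraic})---so the relevant comparison is with Garner's argument, of which your proposal is a recognizable outline: the one-step factorization obtained by pushing out the colimit over the category of squares, the identification of algebras for the pointed endofunctor $R^1$ with $\J^\boxslash$, and a transfinite construction made available by local boundedness are all the right ingredients, and your pushout analysis of $R^1$-algebras is correct. One caveat even at this stage: ``a free monad on a pointed endofunctor has the same algebras as the pointed endofunctor'' is only true for \emph{algebraically-free} monads; Kelly's transfinite construction does produce an algebraically-free monad, but bare 2-categorical freeness would not give you (I.2.27), so this needs to be said.

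The genuine gap is that everything specific to the ``algebraic'' conclusion is either deferred or derived from the wrong universal property. Taking the free monad on $R^1$ in $\M^\2$ gives no control over the left factor: the comultiplication $\vec{\d}$, the distributive law, and above all (I.2.26)---which asserts universality among \emph{morphisms of awfs}, not among monad morphisms---do not follow from monad-freeness. Garner's construction works one level up: the one-step datum is already a comonad over the domain functor (this is true for a discrete set of generators as well, so indexing by the category $\mathbf{Sq}_f$ is not what supplies the comultiplication; it supplies functoriality in $\J$ and the universal property relative to functors out of $\J$), and the awfs is produced as a reflection along the chain $\AWFS \to \LAWFS \to \Cmd(-)^\2 \to \CAT/(-)^\2$ recorded as \eqref{semantics} and exploited in Theorem~\ref{extendedUPthm}. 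It is that stronger reflection which simultaneously yields $\vec{\d}$, the distributive law, and the uniqueness of the induced morphism of awfs; with only ``$\R$ is free on $R^1$'' your final step produces a monad morphism $\R \To \R'$ whose compatibility with the comonads (the comultiplication pentagon of Definition~\ref{defn:morawfs}) and whose uniqueness among awfs morphisms remain unproved, and these are exactly what the restriction/extension bijection in (I.2.26) needs. Separately, your successor step---attach only those squares $j \To R^n f$ that do not already factor through $E^{n-1}f$---is not functorial in $f$ and is not Garner's convergence mechanism: he attaches all squares and then coequalizes, the quotient being built into the free-monoid iteration for the $\oast$ product, and it is that quotient which makes the chain converge on a locally bounded category.
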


We make frequent use of both universal properties. Indeed, the universal property (I.2.26) of the \emph{unit functor} $\J \to \Lalg$ is even stronger than originally stated. It was first extended in \S I.6.4, reproduced as Theorem \ref{universalSOAthm} below, and will be extended further in Theorem \ref{extendedUPthm}. The isomorphism (I.2.27) factors as \[\xymatrix{ \Ralg \ar[r]^-{\mathrm{lift}} & \Lalg^\boxslash \ar[r]^-{\mathrm{res}} & \J^\boxslash},\] where the second component is the restriction along the unit functor.
An easy consequence of (I.2.27) is that the class of algebras for the monad of a cofibrantly generated awfs is retract closed; cf.~I.2.30. For this reason,  the adjective ``cellular'' refers only to left maps.

\begin{ex}\label{Iex} The set $\I$ of inclusions of spheres as the boundary of disks in each dimension generates an awfs on spaces whose left class consists of retracts of relative cell complexes and whose right class consists of Serre fibrations that are also weak homotopy equivalences. Objects of the category $\I^\boxslash$ are maps equipped with lifted contractions filling spheres in the total space which are contractible in the base. The set of inclusions of simplicial spheres into the standard simplices of each dimension generates a similar awfs on the category of simplicial sets.
\end{ex}

\section{Morphisms of algebraic weak factorization systems}\label{morawfssec}

In \S\ref{algQuillsec}, we employ several flavors of morphisms of algebraic weak factorization systems to define algebraic model categories and algebraic Quillen functors of one and two variables. The constituent morphisms preserve either the left or right classes of awfs on different categories while interacting with both.  In this section, we leave aside the model structure context and focus instead on the categorical underpinnings of the two-variable algebraic Quillen functors that will appear in \S\ref{algQuillsec}. To contextualize the new notions appearing in \S\ref{adjawfsssec} below,  we first review the single-variable morphisms introduced in \S I.6.

\subsection{Morphisms} 

To warm up, let us consider the simplest case:  morphisms between two algebraic weak factorization systems on the same category. We write $\vec{E} = (L,R)$ for a functorial factorization in the sense of \eqref{funfact}.

\begin{defn}  A \emph{morphism of functorial factorizations} $\vec{E}  \to \vec{E'}$ consists of a natural transformation $\xi \colon E \To E'$ so that \begin{equation}\label{eq:funmor}\xymatrix@C=30pt@R=30pt@!0{ & \dom \ar[dl]_L \ar[dr]^{L'} \\ E \ar[rr]^\xi \ar[dr]_R & & E' \ar[dl]^{R'} \\ & \cod}\end{equation} commutes.
\end{defn}

The conditions \eqref{eq:funmor} assert that $(1,\xi) \colon L \To L'$ and $(\xi,1) \colon R \To R'$ are morphisms of pointed endofunctors. A morphism of functorial factorizations defines a pair of functors $L\text{-}\mathrm{\bf coalg} \to L'\text{-}\mathrm{\bf coalg}$ and $R'\text{-}{\mathrm{\bf alg}} \to R\text{-}{\mathrm{\bf alg}}$ over $\M^\2$  by post- and pre-composing with the relevant component of $\xi$. In the context of a wfs $(\cL,\cR)$, a functorial factorization gives rise to the following algebraic characterizations of the left and right classes: $f \in \cL$ and $g \in \cR$ if and only if $f$ admits the structure of an $L$-coalgebra and $g$ admits the structure of an $R$-algebra, as displayed in \eqref{eq:pointedstr}. In particular, the existence of a map \eqref{eq:funmor} implies that $\cL \subset \cL'$ and $\cR' \subset \cR$. 
  
 \begin{defn}[Garner, I.2.14]\label{defn:morawfs} A \emph{morphism of awfs} $\xi \colon (\L,\R) \to (\L',\R')$ consists of a natural transformation $\xi \colon E \To E'$ so that any, and hence all, of the following equivalent conditions hold \begin{itemize}\item $(1,\xi) \colon L \To L'$ is a colax comonad morphism and $(\xi,1) \colon R \To R'$ is a lax monad morphism  \item $\xi$ determines functors $\Lalg \to \Lpalg$ and $\Rpalg \to \Ralg$  \item $\xi$ is a morphism of functorial factorizations satisfying pentagons
 \[{\xymatrix@C=20pt@R=20pt@!0{ & E  \ar[ddl]_{\d} \ar[rr]^{\xi} & & E' \ar[ddr]^{{\d}'} & &&&  & E'R \ar[drr]^{E'(\xi,1)} & & \\ & & & & && ER \ar[ddr]_{{\mu}} \ar[urr]^{\xi_R} & & & & E'R'\ar[ddl]^{{\mu}'} \\ EL \ar[drr]_{\xi_L} & & & & E'L' & & & & & \\ & & E'L \ar[urr]_{E'(1,\xi)} & & &&  & E\ar[rr]_{\xi} & & E' & } } 
\] 
  \end{itemize}
 \end{defn}

\subsection{Colax morphisms, lax morphisms, and adjunctions}\label{adjssec}

Weak factorization systems on different categories are compared by means of a functor that preserves either the left or right classes. Consider functorial factorizations $\vec{Q}=(C,F)$ on $\M$ and $\vec{E}=(L,R)$ on $\K$.  

\begin{defn}\label{defn:colaxfunfact} A \emph{colax morphism of functorial factorizations} $(T,\l) \colon \vec{Q} \to \vec{E}$ consists of a functor $T \colon \M \to \K$ and a natural transformation $\lambda \colon TQ \To ET$ so that the following triangles commute. A \emph{lax morphism of functorial factorizations} $(S,\rho) \colon \vec{E} \to \vec{Q}$ consists of a functor $S \colon \K \to \M$ and a natural transformation $\rho \colon QS \To SE$ so that the following triangles commute. \begin{equation}\label{eq:colaxfun}\xymatrix{ & \dom T \ar[dl]_{TC} \ar[dr]^{LT} \\ TQ \ar@{-->}[rr]^\lambda \ar[dr]_{TF} & & ET \ar[dl]^{RT} \\ & \cod T} \qquad \qquad \xymatrix{ & \dom S \ar[dl]_{CS} \ar[dr]^{SL} \\ QS \ar@{-->}[rr]^\rho \ar[dr]_{FS} & & SE \ar[dl]^{SR} \\ & \cod S}\end{equation} 
\end{defn}

A colax morphism of functorial factorizations \eqref{eq:colaxfun} determines a functor $C$-{\bf coalg}$\to L$-{\bf coalg} lifting $T$; a lax morphism determines a functor $R$-{\bf alg}$\to F$-{\bf alg} lifting $S$. Consequently, if $T$ is part of a colax morphism of wfs, then $T$ preserves the left class, and if $S$ is part of a lax morphism, then $S$ preserves the right class. 

\begin{defn}[I.6.4, I.6.6]\label{defn:colaxmor} A \emph{colax morphism of awfs} $(T,\l) \colon (\C,\F) \to (\L,\R)$ consists of a functor $T \colon \M \to \K$ and a natural transformation $\l \colon TQ \To ET$ so that $(1,\l) \colon TC \To LT$ is a colax comonad morphism and $(\l,1) \colon TF \To RT$ is a colax monad morphism. A \emph{lax morphism of awfs} $(S,\rho) \colon (\L,\R) \to (\C,\F)$ consists of a functor $S \colon \K \to \M$ and a natural transformation $\rho \colon QS \To SE$ so that $(1,\rho) \colon CS \To SL$ is a lax comonad morphism and $(\rho,1) \colon FS \To SR$ is a lax monad morphism. 
\end{defn} 

\begin{rmk}
Colax comonad morphisms $TQ \To ET$ are in bijection with functors $\Calg \to \Lalg$ lifting $T$; dually, lax monad morphisms $FS \To SR$ are in bijection with functors $\Ralg \to \Falg$ lifting $S$. The extra conditions in Definition \ref{defn:colaxmor} are equivalent to asking that these lifted functors in fact define double functors between the double categories introduced in \S\ref{dblcatssec} below.
\end{rmk}

Now suppose $ T \colon \M \rightleftarrows \K \colon S$ is an adjunction. It is well known that the left adjoint $T$ preserves the left classes of wfs on $\M$ and $\K$ if and only if the right adjoint $S$ preserves the right classes. In the presence of functorial factorizations, an algebraic manifestation of this fact is encoded in the following definition. 

\begin{defn}
An \emph{adjunction of functorial factorizations} $(T,S,\l,\rho) \colon \vec{Q} \to \vec{E}$  is given by a pair of natural transformations 
 $\lambda \colon TQ \To ET$ and $\rho \colon QS \To SE$ that are mates with respect to the adjunctions \[\xymatrix{\M^{\bf 2} \ar@<-1ex>[d]_-{T^\2} \ar@{}[d]|-{\dashv} \ar[r]^Q & \M \ar@<-1ex>[d]_-T \ar@{}[d]|-{\dashv} \\ \K^{\bf 2} \ar@<-1ex>[u]_-{S^\2} \ar[r]_E & \K \ar@<-1ex>[u]_-S}\]  so that  $(T,\l) \colon \vec{Q} \to \vec{E}$ is a colax morphism of functorial factorizations or equivalently such that $(S,\rho) \colon \vec{E} \to \vec{Q}$ is a lax morphism of functorial factorizations.
 \end{defn}

Let $\M$ and $\K$ have awfs $(\C,\F)$ and $(\L,\R)$, respectively. 

\begin{defn}[I.6.10-13]\label{defn:adjawfs} An \emph{adjunction of awfs} $(T,S,\l,\r) \colon (\C,\F) \to (\L,\R)$ consists of an adjoint pair of functors together with mates $\l$ and $\r$, as above, such that $(S,\r)$ is a lax morphism of awfs and $(T,\l)$ is a colax morphism of awfs.
\end{defn}

\begin{ex} A morphism of awfs in the sense of Definition \ref{defn:morawfs} is an adjunction of awfs, the adjunction in question being the identity. 
\end{ex}

\begin{ex} The geometric realization--total singular complex adjunction defines an adjunction of awfs between the awfs of Example \ref{Iex}. The coalgebra structure of a monomorphism of simplicial sets amounts to a factorization of the map into countably many ``attaching stages'' in which simplices in the codomain are attached via their boundaries to the domain. The lift of $|-| \colon \sSet \to \Top$ to a functor between categories of algebraic cofibrations sends this monomorphism to a relative cell complex equipped with a canonical cellular decomposition. Simultaneously, the lift of $S \colon \Top \to \sSet$ to a functor between algebraic trivial fibrations sends maps with chosen lifted contractions of spheres in the total space that become contractible in the base to maps of simplicial sets with chosen fillers for simplicial spheres in the domain whose image bounds a simplex in the codomain.
\end{ex}

\subsection{Bicolax morphisms, bilax morphisms, and two-variable adjunctions}\label{adjawfsssec}

In \S\ref{adjssec}, we made use of the fact that an adjunction $T \colon \M \rightleftarrows \K \colon S$ induces an adjunction $T^\2 \colon \M^\2 \rightleftarrows \K^\2 \colon S^\2$ on arrow categories. Similarly, a two-variable adjunction induces a two-variable adjunction on arrow categories, though the constituent bifunctors are no longer defined pointwise.

A \emph{two-variable adjunction}  $(\otimes, \homl, \homr) \colon \K \times \M \to \N$ consists of three bifunctors
 \begin{equation}\label{twovaradj1}\xymatrix@C=30pt{ \K \times \M \ar[r]^-{-\otimes-} & \N}  \quad \xymatrix@C=30pt{ \K^\op \times \N \ar[r]^-{\homl(-,-)} & \M}\quad \xymatrix@C=30pt{ \M^\op \times \N \ar[r]^-{ \homr(-,-)} &\K}\end{equation} together with hom-set isomorphisms \begin{equation}\label{twovarhomset}\N(k \otimes m, n)  \cong \M(m, \homl(k,n)) \cong \K(k,\homr(m,n))\end{equation} natural in all three variables. In particular, these form a \emph{parameterized adjunction}: fixing any one variable gives rise to families of adjunctions in the usual sense. 
 
When $\K$ and $\M$ have pullbacks and $\N$ has pushouts, there is an induced two-variable adjunction \begin{equation}\label{twovaradj}\xymatrix@C=28pt{ \K^\2 \times \M^\2 \ar[r]^-{-\hat{\otimes}-} & \N^\2}  \ \xymatrix@C=28pt{ (\K^\2)^\op \times \N^\2 \ar[r]^-{\hhoml(-,-)} & \M^\2}\ \xymatrix@C=28pt{ (\M^\2)^\op \times \N^\2 \ar[r]^-{ \hhomr(-,-)} &\K^\2}\end{equation} defined at $i \colon A \to B \in \K$, $j \colon J \to K \in \M$, and $f \colon X \to Y \in \N$ by 
 \[\xymatrix{ A \otimes J \ar[d]_{i \otimes J} \ar@{}[dr]|(.8){\ulcorner} \ar[r]^{A \otimes j} & A \otimes K \ar[d] \ar@/^/[ddr]^{i \otimes K} &  \save[]\homl(B,X)\restore \ar@{-->}[dr]|(.6){\hhoml(i,f)} \ar@/^/[drr]^{\homl(B,f)} \ar@/_/[ddr]_{\homl(i,X)} \\ B \otimes J \ar[r] \ar@/_/[drr]_{B \otimes j} & \cdot \ar@{-->}[dr]|(.4){i \hast j} &   & \cdot \ar[r] \ar[d] \ar@{}[dr]|(.2){\lrcorner} &\save[]\homl(B,Y)\restore \ar[d]^{\homl(i,Y)} \\ & & B \otimes K & \save[]\homl(A,X)\restore \ar[r]_{\homl(A,f)} & \save[]\homl(A,Y)\restore}\] 
 The bifunctor $\hast$ is referred to as the \emph{pushout-product}; we call $\hhoml$ and $\hhomr$, defined analogously, \emph{pullback-homs}. 

In order to algebraicize Quillen bifunctors, we will make use of bicolax morphisms that are covariant in both variables and bilax morphisms of mixed variance. To introduce these notions, consider bifunctors $-\otimes- \colon \K \times \M \to \N$ and $\hom(-,-) \colon \M^\op \times \N \to \K$, abbreviated using exponential notation.

\begin{defn}  Given functorial factorizations $\vec{Q'}$ on $\K$, $\vec{Q}$ on $\M$, and $\vec{E}$ on $\N$, 
a \emph{bicolax morphism of functorial factorizations} lifting $\otimes$ and a \emph{bilax morphism of functorial factorizations} lifting $\hom$ are given respectively by natural transformations $\lambda \colon Q' \otimes Q \To E$ and $\rho \colon Q'\hhom \To \hom(Q,E)$ satisfying the displayed conditions (abbreviating the codomain and domain functors from arrow categories to their base with ``c'' and ``d'').
\begin{equation}\label{eq:colax2var} \xymatrix@C=35pt{ d \otimes Q \displaystyle\po_{d \otimes d} Q'\! \otimes d \ar[d]_{C'\! \hast C} \ar[r]^-{ 1 \otimes F \sqcup F'\! \otimes 1} & d \otimes c \displaystyle\po_{d \otimes d} c \otimes d \ar[d]^{L\hast}   \\ Q'\! \otimes Q \ar[r]^{\l} \ar[d]_{F'\! \otimes F} & E\hast \ar[d]^{R\hast }  \\ c \otimes c \ar@{=}[r] & c\hast } \xymatrix@C=45pt{ d^c \ar@{=}[r] \ar[d]_{C'\!\hhom} & d^c \ar[d]^{\hom(F,L)} \\ Q'\hhom \ar[d]_{F'\!\hhom} \ar[r]^{\rho^r} & E^Q \ar[d]^{\hhom(C,R)} \\ c^c \displaystyle\pb_{c^d} d^d \ar[r]_-*+{\labelstyle\hom(F,1) \times \hom(1,L)}  & c^Q \displaystyle\pb_{c^d} E^d }
\end{equation}
\end{defn}

This data defines functors  \[ \xymatrix{C'\text{-}{\mathrm{\bf coalg}} \times C\text{-}{\mathrm{\bf coalg}}  \ar[r]^-{\hotimes} &  L\text{-}{\mathrm{\bf coalg}}}  \qquad  \xymatrix{C\text{-}{\mathrm{\bf coalg}}^\op \times R\text{-}{\mathrm{\bf alg}} \ar[r]^-{\hhom} & F'\text{-}{\mathrm{\bf alg}}}\] lifting the pushout-product and pullback-hom.
When the functorial factorizations are part of wfs, the underlying non-algebraic content is that the pushout-product of two maps in the left classes on $\K$ and $\M$ is in the left class on $\N$ and that the pullback-hom of a map in the left class on $\M$ and a map in the right class on $\N$ is in the right class on $\K$.

Suppose $\K$, $\M$, and $\N$ are equipped with awfs $(\C',\F')$, $(\C,\F)$, and $(\L,\R)$ respectively. 
A two-variable adjunction of awfs $(\otimes, \homl, \homr) \colon (\C',\F') \times (\C,\F) \to (\L,\R)$ is given by a \emph{bicolax morphism of awfs} over $\otimes$ or dually by a \emph{bilax morphism of awfs} over either $\homl$ or $\homr$.  A bicolax morphism of awfs is a bicolax morphism of functorial factorizations in which the natural transformation satisfies three additional conditions: a pentagon and two hexagons relating the comultiplication in one of the domain variables with the (co)multiplications in the other two variables. However, we find a different formulation more enlightening. The data of a two-variable adjunction of awfs consists of lifted functors \begin{equation}\label{eq:twovaradjlifts}  \xymatrix{\Cpalg \times \Calg \ar[r]^-{\hotimes} & \Lalg} \end{equation} \[ \xymatrix{ \Cpalg^\op \times \Ralg \ar[r]^-{\hhoml} & \Falg} \qquad \xymatrix{ \Calg^\op \times \Ralg \ar[r]^-{\hhomr} &  \Fpalg} \]
characterized by the natural transformations displayed below
\begin{equation}\label{liftedfun}{ \xymatrix@R=5pt@C=20pt{  & d \otimes Q \displaystyle\po_{d \otimes d} Q' \otimes d \ar[dd]_{C' \hast C} \ar[r]^{ 1 \otimes F \sqcup F' \otimes 1} & d \otimes c \displaystyle\po_{d \otimes d} c \otimes d \ar[dd]^{L (-\hast-)} \\ **[r]\save[]{\vec{\l}:=}\restore & &   \\ &  Q' \otimes Q \ar[r]^{\l} & E(-\hast-) \\   & Q\hhoml \ar[r]_-{\r^{\ell}} \ar[dd]_{F\hhoml} & \homl(Q', E) \ar[dd]^{\hhoml(C', R)} 
 \\  **[r]\save[]{\vec{\r^\ell}:=}\restore& & \\  & \homl(c,c) \displaystyle\pb_{\homl(d,c)} \homl(d,d) \ar[r]_*+{\labelstyle\homl(C',1) \times \homl(1,L)} & \homl(Q',c) \displaystyle\pb_{\homl(d,c)} \homl(d,E) \\  & Q'\hhomr \ar[dd]_{F'\hhomr} \ar[r]_-{\r^r} & \homr(Q,E) \ar[dd]^{\hhomr(C,R)}\\  **[r]\save[]{\vec{\r^r}:=}\restore& & \\  &  \homr(c,c) \displaystyle\pb_{\homr(d,c)} \homr(d,d) \ar[r]_*+{\labelstyle \homr(C,1) \times \homr(1,L)} & {\homr(Q,c) \displaystyle\pb_{\homr(d,c)} \homr(d,E)} }}\end{equation}
 which satisfy (co)unit and (co)multiplication conditions encoding compatibility with the (co)monads.
 
\begin{defn}\label{twovaradjdefn} A \emph{two-variable adjunction of awfs} $\otimes \colon (\C',\F') \times (\C,\F) \to (\L,\R)$ is a two-variable adjunction $(\otimes, \homl, \homr) \colon \K \times \M \to \N$ equipped with lifted functors \eqref{eq:twovaradjlifts} characterized by natural transformations $\vec{\l}$, $\vec{\r^\ell}$, and $\vec{\r^r}$ whose components $\l$, $\r^\ell$, $\r^r$ 
\[\xymatrix@C=12pt{ \K^\2 \times \M^\2 \ar[d]_{\hast} \ar[r]^-{Q' \times Q} \ar@{}[dr]|{\l\Swarrow} & \K \times \M \ar[d]^{\otimes} \\ \N^\2 \ar[r]_E & \N}\xymatrix@C=12pt{ \M^\2 \ar[r]^Q \ar@{}[dr]|{\Searrow \rho^\ell} & \M \\ (\K^\2)^\op \times \N^\2 \ar[u]^{\hhoml} \ar[r]_-{Q' \times E} & \K^\op \times \N \ar[u]_{\homl}} \xymatrix@C=12pt{ \K^\2 \ar[r]^{Q'} \ar@{}[dr]|{\Searrow \rho^r} & \K \\
(\M^\2)^\op \times \N^\2 \ar[u]^{\hhomr} \ar[r]_-{Q\times E} & \M^\op \times \N \ar[u]_{\homr}}  \] are parameterized mates.
\end{defn}

\begin{ntn}\label{terriblentn}
In \S\S\ref{compsec}-\ref{cellsec}, in which we prove our main technical theorems characterizing two-variable adjunctions of awfs, we adopt the following notation. We fix a two-variable adjunction $(\otimes,\homl,\homr) \colon \K \times \M \to \N$ and let $(\C',\F')$, $(\C,\F)$, and $(\L,\R)$ denote awfs on $\K$, $\M$, and $\N$, respectively, extending functorial factorizations $\vec{Q}'$, $\vec{Q}$, and $\vec{E}$. We write $i \colon A \to B$, $j \colon J \to K$, and $f \colon X \to Y$ for generic elements of $\K^\2$, $\M^\2$, and $\N^\2$ respectively. Whenever we assume further that $i$ has the structure of a $\C'$-coalgebra, $j$ has the structure of a $\C$-coalgebra, or $f$ has the structure of an $\R$-algebra, we make this explicit.
\end{ntn}

\section{The composition criterion}\label{compsec}

In practice it is often easier to define a lifted bifunctor between categories of (co)algebras for awfs than it is to write down the characterizing natural transformation. In this section, we will develop a composition criterion that allows us to recognize bilax and bicolax morphisms of awfs ``in the wild.'' This will be the key tool in proof of our main existence theorem for morphisms between cofibrantly generated awfs.  In later sections, we will see that this is essentially our only trick for recognizing two-variable adjunctions of awfs.

For motivation, we begin in \S\ref{dblcatssec} with a digression on double categorical aspects of awfs. Then in \S\ref{arisingssec} we consider certain single-variable adjunctions derived from two-variable adjunctions to introduce ideas and notation that will be used later. In \S\ref{compssec}, we finally present the composition criterion, proving it first for single-variable adjunctions and then extending it immediately to two-variable adjunctions.

\subsection{Double categorical aspects of algebraic weak factorization systems}\label{dblcatssec}

The idea behind the composition criterion is motivated by the following collection of ideas due to Richard Garner, which we now summarize. Here composition means ``vertical'' composition of algebras or coalgebras for the monad or comonad of an awfs; we'll see shortly that these assemble into a double category. 

The vertical composition law is most clearly illustrated in the cofibrantly generated case. The category $\J^\boxslash$ of Definition \ref{Jboxslashdefn} is the category of vertical arrows and squares for a double category, also denoted $\J^\boxslash$, whose objects are horizontal arrows are simply those of $\M$.  The vertical composition is defined as follows. If $(f,\phi_f), (g,\phi_g) \in \J^\boxslash$ with $\cod f = \dom g$, their composite $(gf, \phi_{g} \bullet \phi_f)$ is given by \begin{equation}\label{boxcomp} \phi_g \bullet \phi_f (j,a,b) := \phi_f(j, a, \phi_g(j, fa, b)) \quad \quad \vcenter{\xymatrix{\cdot \ar[dd]_j \ar[rr]^a & & \cdot \ar[d]^f \\ && \cdot \ar[d]^g \\ \cdot \ar@{-->}[urr]_{\phi_g(j,fa,b)} \ar[rr]_b \ar@{-->}[uurr]^(.6)*-{\labelstyle\phi_{f}(j,a,\phi_g)} & & \cdot}}\end{equation}
There is a forgetful double functor $\J^\boxslash \to \SSq(\M)$ which restricts to the usual forgetful functor on the categories of vertical arrows. 

For essentially the same reason, the category $\Lalg$ embeds as the vertical arrows and squares of a double category $\Coalg(\L)$: \[\xymatrix{ \Lalg \displaystyle\pb_{\M} \Lalg \ar[r]^-{\circ} &  \Lalg \ar@<1ex>[r]^-{\dom} \ar@<-1ex>[r]_-{\cod} & \M \ar[l]|-{\id}}\] Objects are objects of $\M$, horizontal arrows are morphisms of $\M$, vertical arrows are $\L$-coalgebras, and squares are maps of $\L$-coalgebras. The essential point is that $\L$-coalgebras have a canonical composition law---the functor $\circ$ above---that is functorial against $\L$-coalgebra morphisms. This vertical composition is derived from the embedding (\ref{eq:liftfunctor}) and (\ref{boxcomp}): if $(i,s), (j,t) \in \Lalg$ with $\cod i = \dom j$, then the arrow $ji$ is canonically an $\L$-coalgebra with coalgebra structure $t\bullet s$ defined by \begin{equation}\label{coalgcomp}t\bullet s := \xymatrix{ \cod j \ar[r]^t & Ej \ar[rr]^-{E(E(1,j)\cdot s,1)} & & ER(ji) \ar[r]^{\mu_{ji}} & E(ji).}\end{equation} Dually to the construction of (\ref{boxcomp}), $t \bullet s$ is defined to be the canonical solution to the lifting problem displayed on the left
\[\xymatrix{ \dom j \ar@<-.5ex>[dd]_j \ar[r]^{E(1,j)\cdot s} & E(ji) \ar@<.5ex>[dd]^{R(ji)}  & & \dom i \ar@<-.5ex>[dd]_i \ar@{=}[r] & \dom i \ar[dd]_(.75){ji} \ar[r]^{L(ji)} & E(ji) \ar@<.5ex>[dd]^{R(ji)} \\  \ar@{-->}[r]_{E(E(1,j)s,1)}& \ar@<.5ex>@{-->}[u]^{\mu_{ji}} & & \ar@{-->}[r]^{E(1,j)} & \ar@{-->}[r]_{E(L(ji),1)} \ar@{-->}[ur]^1 & \ar@<.5ex>@{-->}[u]^(.4){\mu_{ji}}  \\ \cod j \ar@<-.5ex>@{-->}[u]_t \ar@{=}[r] & \cod j & & \cod i = \dom j \ar@<-.5ex>@{-->}[u]_s  \ar[r]_-j & \cod j \ar@{=}[r] &}\]
whose top component, by a monad triangle identity, is the canonical solution to the lifting problem displayed on the right.

There is an obvious forgetful double functor $\Coalg(\L) \to \SSq(\M)$ which factors through the left class of the underlying wfs of $(\L,\R)$. A double category $\Alg(\R)$ is defined similarly with composition law, dual to (\ref{coalgcomp}), defined by means of the vertical composition in $\Lalg^\boxslash$. While the definitions are fresh in mind, we prove a lemma that will be used later.

\begin{lem}\label{liftcomplem} For any awfs $(\L,\R)$, the functor $\xymatrix{\Ralg \ar[r]^-{\mathrm{lift}} & \Lalg^\boxslash}$ over $\M^\2$ preserves composition of algebras.
\end{lem}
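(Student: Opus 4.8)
The plan is to unwind both composition laws and compare them. Fix composable $\R$-algebras $(f,\alpha)$ and $(g,\beta)$, so $\cod f=\dom g$, with structure maps $\alpha\colon Ef\to\dom f$ and $\beta\colon Eg\to\dom g$. Dualizing \eqref{coalgcomp}, the composition law of $\Alg(\R)$ equips $gf$ with the $\R$-algebra structure $\gamma:=\alpha\cdot E(1,v)\cdot\delta_{gf}\colon E(gf)\to\dom f$, where $v:=\beta\cdot E(f,1)\colon E(gf)\to\dom g$ is built from the square $(f,1)\colon gf\To g$. On the other hand, $\mathrm{lift}$ carries an $\R$-algebra $(g,\beta)$ to the lifting function sending an $\L$-coalgebra $(j,\sigma)$, with $\sigma\colon\cod j\to Ej$, and a square $(a,b)\colon j\To g$ to the canonical solution $\beta\cdot E(a,b)\cdot\sigma$ of \eqref{chosenlift}. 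Unwinding \eqref{boxcomp}, the lemma therefore reduces to the identity
\[ \gamma\cdot E(a,b)\cdot\sigma \;=\; \alpha\cdot E(a,w)\cdot\sigma, \qquad w:=\beta\cdot E(fa,b)\cdot\sigma, \]
for every $\L$-coalgebra $(j,\sigma)$ and every square $(a,b)\colon j\To gf$: the left-hand side is the value at $((j,\sigma),a,b)$ of $\mathrm{lift}(gf,\gamma)$ and the right-hand side that of the composite $\mathrm{lift}(g,\beta)\bullet\mathrm{lift}(f,\alpha)$ prescribed by \eqref{boxcomp}. A preliminary remark: naturality of $L$ with the algebra unit law $\beta\cdot Lg=1$ shows that $(1,v)\colon L(gf)\To f$ and $(a,w)\colon j\To f$ are honest squares, so that both sides are defined.

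To prove the identity I would rewrite its left-hand side from the inside out. Naturality of the comultiplication $\delta\colon L\To L^2$ along $(a,b)\colon j\To gf$ turns $\delta_{gf}\cdot E(a,b)$ into $E(a,E(a,b))\cdot\delta_j$; coassociativity of the $\L$-coalgebra $(j,\sigma)$ turns $\delta_j\cdot\sigma$ into $E(1,\sigma)\cdot\sigma$; and functoriality of $E$ contracts the resulting string $E(1,v)\cdot E(a,E(a,b))\cdot E(1,\sigma)$ — which is $E$ applied to the composable triple $j\xrightarrow{(1,\sigma)}Lj\xrightarrow{(a,E(a,b))}L(gf)\xrightarrow{(1,v)}f$ in $\M^\2$ — into the single map $E(a,\,v\cdot E(a,b)\cdot\sigma)$. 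Finally, functoriality of $E$ along the composable squares $j\xrightarrow{(a,b)}gf\xrightarrow{(f,1)}g$ gives $v\cdot E(a,b)=\beta\cdot E(fa,b)$, whence $v\cdot E(a,b)\cdot\sigma=w$ and the left-hand side becomes $\alpha\cdot E(a,w)\cdot\sigma$, as required.

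The argument is essentially bookkeeping, and I anticipate no conceptual obstacle once the dual of \eqref{coalgcomp} is written down. The one point that warrants care is precisely that dualization — identifying that the comparison map occurring in $\gamma$ is $E(f,1)$ (and not, say, a comparison map over $gf$ itself), and that the square $(1,v)\colon L(gf)\To f$ it sits in is legitimate, which is where the algebra unit law enters — together with keeping straight the numerous occurrences of the comparison functor $E(-,-)$ and the naturality and functoriality squares invoked along the way. (The full monad laws for $\R$ play no role here; only the algebra unit law is needed.)
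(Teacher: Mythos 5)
Your proof is correct and follows essentially the same route as the paper's: both unwind the dual of \eqref{coalgcomp}, apply naturality of $\delta$ together with the coalgebra's comultiplication compatibility, and then use functoriality of $E$ on a composite of squares to identify the two lifting functions (your single square $(1,v)$ is just the paper's composite $(1,t)\circ(1,E(f,1))$, and your contraction to $E(a,w)$ matches the paper's factorization of $(a,\phi_g)$). No gaps.
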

\begin{proof} The functor ``lift'' assigns an $\R$-algebra $(g,t)$ the lifting function $\phi_{(g,t)}$ defined via \eqref{chosenlift}. Given composable $(f,s), (g,t) \in \Ralg$, we must show that $\phi_{(g,t)} \bullet \phi_{(f,s)}$, defined by the formula (\ref{boxcomp}), equals $\phi_{(gf,t\bullet s)}$. Using the dual to (\ref{coalgcomp}), the chosen solution $\phi_{(gf,t\bullet s)}((j,z),a,b)$ to a lifting problem (\ref{boxcomp}) against an $\L$-coalgebra $(j,z)$ is \[\xymatrix@C=18pt{**[r]{\cod j} \ar[r]^-z \ar[dr]_z & Ej \ar[r]^-{E(a,b)} \ar[dr]_(.45){\d_j} & E(gf) \ar[r]^{\d_{gf}} & EL(gf) \ar[rr]^{E(1, E(f,1))} && E(Lg \cdot f) \ar[r]^-{E(1,t)} & Ef \ar[r]^-s & **[l]{\dom f} \\ & Ez \ar[r]_{E(1,z)} & ELj \ar[ur]_{E(a,E(a,b))} }\] by naturality of $\d$ and the comultiplication compatibility condition for $z$. By definition $\phi_{(g,t)}\bullet \phi_{(f,s)} ((j,z),a,b)$ is \begin{equation*}\xymatrix{ \cod j \ar[r]^z & Ej \ar[r]^-{E(a, \phi_g)} & Ef \ar[r]^-s & \dom f}\end{equation*} where $\phi_g$ is shorthand for $\phi_{(g,t)}((j,z),fa,b) := t \cdot E(fa,b) \cdot z$. The lifting problem $(a,\phi_g) : j \To f$ factors as \[\xymatrix{ \cdot \ar@{=}[r] \ar[d]_j & \cdot \ar[r]^a \ar[d]^{Lj} & \cdot\ar[d]^{L(gf)} \ar@{=}[r] & \ar[d]^{Lg \cdot f} \cdot \ar@{=}[r] & \cdot \ar[d]^f \\ \cdot \ar[r]_z & \cdot \ar[r]_{E(a,b)} & \cdot \ar[r]_{E(f,1)} & \cdot \ar[r]_t & \cdot}\] Hence, $E(a, \phi_g)$ is the image of this factorization under $E \colon \M^\2 \to \M$, and therefore $\phi_{(gf,t\bullet s)} = \phi_{(g,t)} \bullet \phi_{(f,s)}$.
\end{proof}

These double categories capture the entire structure of the awfs $(\L,\R)$.

\begin{lem}[Garner, I.2.24]\label{awfscharlem} Either of the double categories $\Coalg(\L)$ or $\Alg(\R)$ completely determines the awfs $(\L,\R)$.
\end{lem}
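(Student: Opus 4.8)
The plan is to show how the functorial factorization, comonad $\L$, and monad $\R$ can all be recovered from (say) the double category $\Coalg(\L)$; the argument for $\Alg(\R)$ is dual, and since each of $\L$, $\R$ determines the other via the distributive law, either suffices. First I would recall that the underlying functorial factorization $\vec{E} = (L,R)$ is recovered directly: for any arrow $f \colon X \to Y$ of $\M$, the left factor $Lf \colon X \to Ef$ carries a canonical $\L$-coalgebra structure (the comultiplication $\vec{\d}$ of the comonad restricted along the free-coalgebra inclusion), so $Lf$ is a vertical arrow of $\Coalg(\L)$; then $Ef$ is its codomain (an object of $\M$, read off via $\cod \colon \D_1 \to \D_0$) and $Rf \colon Ef \to Y$ is the unique horizontal arrow making the evident square a square of $\Coalg(\L)$ — concretely $Rf$ is the bottom edge of the $\L$-coalgebra $(Lf, \vec\d_f)$ viewed through the forgetful double functor $\Coalg(\L) \to \SSq(\M)$. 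Functoriality of $\vec E$ on morphisms is then recovered from the horizontal composition of squares in $\D_1$.

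Next I would reconstruct the comonad structure $\vec\e$ and $\vec\d$. The counit component $\vec\e_f \colon Lf \To f$ — i.e. the pair $(1_X, Rf)$ — is just the evident square exhibiting $f$ as the vertical composite of $Lf$ with the identity-at-$Y$ vertical arrow, which is available since $\id \colon \D_0 \to \D_1$ and $\circ$ are part of the double-category data. The comultiplication $\vec\d_f \colon Lf \To LLf$ is precisely the coalgebra structure on $Lf$ itself, which is visible in $\Coalg(\L)$ as the unique square data making $(Lf, \vec\d_f)$ a vertical arrow: in other words, the vertical arrows of $\Coalg(\L)$ lying over a given $f$ and the $\L$-coalgebra structures on $f$ are the same thing by construction, so the free coalgebra $(Lf, \vec\d_f)$ is simply the vertical arrow $Lf$ of the double category. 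The comonad axioms for $(\L, \vec\e, \vec\d)$ are then consequences of the associativity and unit laws for vertical composition $\circ$ in $\D$, i.e. functoriality of $\circ$. To recover the monad $\R = (R, \vec\eta, \vec\mu)$ from $\Coalg(\L)$, I would use the adjoint/lifting correspondence: by \eqref{eq:liftfunctor} the double category $\Coalg(\L)$ encodes, for each $\L$-coalgebra $(j,t)$ and each arrow $g$, a canonical solution to every lifting problem of $j$ against $g$, and from the universal (terminal) property of such lifting data one extracts the $\R$-algebra structure on $Rf$, hence $\vec\eta$ and $\vec\mu$; alternatively, invoke that an awfs is determined by its comonad together with the distributive law, and the distributive law $(\d,\mu)$ is itself encoded by the interaction of vertical composition with the factorization, hence by $\Coalg(\L)$.

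The step I expect to be the main obstacle is the reconstruction of the full monad structure $\R$ — equivalently the comultiplication-compatible lifting data — purely from $\Coalg(\L)$, since $\Coalg(\L)$ manifestly remembers only the comonad side and the vertical composition of coalgebras. The key is that vertical composition of $\L$-coalgebras, as spelled out in \eqref{coalgcomp}, is defined using the multiplication $\vec\mu$ of $\R$; I would argue conversely that $\vec\mu_f$ is the unique natural transformation $RRf \To Rf$ such that the composition law $\circ$ of $\Coalg(\L)$ agrees with \eqref{coalgcomp}, using that a natural transformation of functors into $\M$ is determined by its components and that the lifting functor of \eqref{eq:liftfunctor} together with Lemma \ref{liftcomplem} pins down the composite uniquely. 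Since this is the bookkeeping core of Garner's original argument (I.2.24), I would keep the exposition brief, citing the essential points — that $\vec E$ is read off from objects/arrows, that $\vec\d$ is the coalgebra structure on $Lf$ visible as a vertical arrow, and that $\vec\mu$ is forced by the composition functor $\circ$ — and leave the remaining diagram-chase verifications of the (co)monad and distributive-law axioms, which follow formally from functoriality of $\circ$, to the reader.
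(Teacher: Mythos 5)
Your overall strategy is the paper's own, transported to the dual side: read off the functorial factorization and one of the two (co)monads from the category of vertical arrows, then use the vertical composition $\circ$ to recover the missing (co)multiplication. (The paper argues from $\Alg(\R)$ and recovers $\d$; since the statement is symmetric, working from $\Coalg(\L)$ is fine --- though not for the reason you give: the comonad does not ``determine the monad via the distributive law,'' and $\vec\mu$ genuinely has to be reconstructed.) The gap is in exactly that step, which carries all the content of the lemma. You assert that $\vec\mu$ is the unique natural transformation making the composition law of $\Coalg(\L)$ agree with \eqref{coalgcomp}, but you do not establish this, and the tools you cite do not do it: Lemma~\ref{liftcomplem} is about the functor $\mathrm{lift}\colon \Ralg \to \Lalg^\boxslash$ preserving composition and says nothing about how $\circ$ pins down $\mu$; and ``a natural transformation is determined by its components'' is beside the point, since the problem is to produce those components from the double-categorical data. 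Note moreover that \eqref{coalgcomp} expresses $t\bullet s$ as $\mu_{ji}$ precomposed with maps involving the coalgebra structures themselves, so an equation of that shape does not immediately isolate $\mu_f$; one must evaluate at well-chosen coalgebras and strip off the remaining factors. Your fallback (``the distributive law is encoded by the interaction of vertical composition with the factorization'') is circular: exhibiting that encoding is precisely what is to be proved.

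What the paper's proof actually consists of is the explicit formula \eqref{deltadefn}: $\d_f$ is $(\mu_f\bullet\mu_{Lf})\cdot E(L^2f,1)$, i.e.\ it is expressed through the algebra structure that $\Alg(\R)$ assigns to the composite of the free algebras $(Rf,\mu_f)\circ(RLf,\mu_{Lf})$. The dual formula is what your argument needs: from $\Coalg(\L)$ define $\mu_f := E(1,R^2f)\cdot(\d_{Rf}\bullet\d_f)$, using the coalgebra structure that the double category assigns to the composite of the cofree coalgebras $(LRf,\d_{Rf})\circ(Lf,\d_f)$; checking that this recovers the original multiplication (and hence the monad and distributive-law data) is the diagram chase the paper defers to \S I.2.5 and \cite{barthelriehl}. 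One smaller repair: to locate the cofree coalgebras $(Lf,\d_f)$ inside the abstract double category you should invoke comonadicity of the forgetful functor $\Lalg\to\M^\2$ --- the category of vertical arrows together with its projection to $\M^\2$ determines $\L$, and in particular $L$, $\vec\e$, hence $R$ and $\vec\eta$ --- rather than saying ``$Lf$ carries a canonical coalgebra structure,'' which presupposes the comonad you are in the middle of reconstructing.
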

\begin{proof} Given $\Alg(\R)$, the functorial factorization $\vec{E}$, and in particular the functor $L$ and counit $\vec{\e}$, can be read off from the unit $\vec{\eta}$ of the monad $\R$. The comultiplication $\d$ can be defined in terms of the algebra structure assigned to the composite of the free algebras $(Rf, \mu_f) \circ (RLf, \mu_{Lf})$ as follows: \begin{equation}\label{deltadefn}\xymatrix@C=30pt{ \d_f := Ef \ar[r]^-{E(L^2f,1)} & E(Rf \cdot RLf) \ar[r]^-{\mu_f \bullet \mu_{Lf}} & ELf}\end{equation}
See \S I.2.5 or \cite[4.10]{barthelriehl} for more details.
\end{proof}

The characterization of Lemma \ref{awfscharlem} extends to morphisms. Let $(\C,\F)$ and $(\L,\R)$ be awfs on $\M$ and $\K$ and let $T \colon \M \rightleftarrows \K \colon S$ be functors, not necessarily adjoint.
Lifted double functors $T \colon \Coalg(\C) \to \Coalg(\L)$ and $S \colon \Alg(\R) \to \Alg(\F)$ determine lifted functors $T \colon \Calg \to \Lalg$, $S \colon \Ralg \to \Falg$ by passing to the categories of vertical arrows.

\begin{lem}[Garner, I.6.9]\label{laxmorlem} 
A lifted double functor $S \colon \Alg(\R) \to \Alg(\F)$ is precisely a lifted functor $S \colon \Ralg \to \Falg$ that preserves the canonical composition of algebras, which is precisely a lax morphism of awfs $S \colon (\L,\R) \to (\C,\F)$. Dually, a lifted double functor $T \colon \Coalg(\C) \to \Coalg(\L)$ is precisely a composition-preserving lifted functor $T \colon \Calg \to \Lalg$, which is precisely a colax morphism of awfs $T \colon (\C,\F) \to (\L,\R)$.
\end{lem}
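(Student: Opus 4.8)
The plan is to establish the chain of equivalences in each direction separately (the two halves being dual, so it suffices to treat the lax/monad side). There are two implications to organize: first, that a lifted double functor $S \colon \Alg(\R) \to \Alg(\F)$ is the same thing as a composition-preserving lifted functor on the categories of vertical arrows; and second, that such a composition-preserving lift is the same thing as a lax morphism of awfs in the sense of Definition \ref{defn:colaxmor}. The first equivalence is essentially definitional once one unwinds what a double functor must do: passing to vertical arrows gives the underlying lifted functor, and the only extra data in a double functor is compatibility with vertical composition; since horizontal arrows and squares are already handled by the lifted functor on $\Ralg$ and $\Falg$ (recalling that the category of vertical arrows remembers horizontal composition of squares), the content of ``lifted double functor'' is precisely ``lifted functor preserving the canonical composition of algebras'' of \eqref{coalgcomp} (dualized). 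One should remark that compatibility with horizontal identities is automatic and that functoriality of the double functor forces the underlying $S$ on objects to be the given one.

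The substantive step is the second equivalence: a composition-preserving lifted functor $S \colon \Ralg \to \Falg$ is exactly a lax morphism of awfs. By Lemma \ref{appellem}, a lifted functor $S \colon \Ralg \to \Falg$ over $\K^\2$, $\M^\2$ corresponds to a lax monad morphism $(S,\rho) \colon \vec{E} \to \vec{Q}$, i.e.\ a natural transformation $\rho \colon QS \To SE$ making $(\rho,1) \colon FS \To SR$ a lax monad morphism; what remains is to show that the extra condition ``$(1,\rho) \colon CS \To SL$ is a lax comonad morphism'' from Definition \ref{defn:colaxmor} is equivalent to preservation of vertical composition of algebras. Here I would exploit Lemma \ref{awfscharlem} and especially the formula \eqref{deltadefn} expressing the comultiplication $\d$ of $\F$ in terms of the composite of free algebras $(Rf,\mu_f) \circ (RLf,\mu_{Lf})$. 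Concretely: the comonad-morphism axiom for $(1,\rho)$ is a statement about how $\rho$ interacts with the comultiplications of $\C$ and $\L$; since those comultiplications are, by \eqref{deltadefn}, encoded by the composition law on free algebras, and since $S$ already commutes with the free-algebra structures (that is the lax monad morphism condition), the comonad axiom for $\rho$ becomes precisely the assertion that $S$ sends the composite $(Rf,\mu_f)\circ(RLf,\mu_{Lf})$ to $(R'Sf,\mu'_{Sf})\circ(\ldots)$ compatibly — i.e.\ preserves the canonical composition. One also needs the counit/unit triangle conditions to match up, but these follow from $\rho$ being a morphism of functorial factorizations (the triangles in \eqref{eq:colaxfun}), which is part of the data in either formulation.

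The main obstacle I anticipate is the bookkeeping in this last step: one must carefully verify that the comultiplication-compatibility pentagon for $(1,\rho)$, when transported through \eqref{deltadefn} and the known commutation of $S$ with free-algebra structure and with $L$-counits, is literally the equation expressing $S(t \bullet s) = Ss \bullet' St$ for composable algebras $(f,s),(g,t)$ — and, conversely, that composition-preservation on \emph{all} composable pairs (not just free ones) can be deduced from the free case, using that every algebra is a retract/quotient of a free one and that both sides are natural. This reduction to free algebras, and the diagram chase identifying the pentagon with \eqref{coalgcomp}, is where the real work lies; everything else is formal. I would cite \S I.2.5 and \S I.6.9 for the pieces of this calculation that were carried out in the prequel, and present here only the identification of the two conditions, leaving the routine verification to the reader or to the cited references.
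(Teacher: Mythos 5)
Your outline follows the paper's route: the double-functor condition unwinds, essentially definitionally, to a lifted functor on the vertical-arrow categories commuting with the vertical composition functor, and the substantive content is the identification, via Lemma \ref{appellem} and the formula \eqref{deltadefn}, of the comultiplication pentagon for $(S,\rho)$ with preservation of the canonical composite of the free pair $(Rf,\mu_f)\circ(RLf,\mu_{Lf})$; like the paper, you defer the actual diagram chase to I.6.9.

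The one step whose proposed mechanism would not survive as written is your reduction of composition-preservation for \emph{arbitrary} composable pairs to the free case ``using that every algebra is a retract/quotient of a free one and that both sides are natural.'' First, algebras are not retracts of free algebras \emph{in the category of algebras}: the would-be section $(Lf,1)\colon (f,s)\to(Rf,\mu_f)$ is not an algebra map, since the monad unit law gives $\mu_f\cdot E(Lf,1)=1$, so the algebra-map condition would force $Lf\cdot s=1$, which fails in general (only $s\cdot Lf=1$ holds). Second, and more importantly, neither the retract nor the split-coequalizer presentation respects composability of a pair $(f,s),(g,t)$: the free algebras on $f$ and on $g$ are not composable with one another, and the canonical composable free pair $(Rf,\mu_f)\circ(RLf,\mu_{Lf})$ does not map onto a given composable pair in the way such a naturality argument would require. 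In I.6.9 (and in the analogous Theorem \ref{compcritthm} of this paper) the logic runs the other way: the free pair is used only to \emph{extract} the pentagon from composition-preservation, while the implication ``pentagon $\Rightarrow$ $S$ preserves $t\bullet s$ for all composable algebras'' is a direct chase from the explicit formula for $\bullet$ (the dual of \eqref{coalgcomp}), naturality of $\rho$, and the monad and algebra identities---no reduction to free algebras is needed. With that substitution your argument coincides with the paper's.
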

\begin{proof} A double functor $\Alg(\R) \to \Alg(\F)$ lifting $S$ is determined by a commuting diagram of functors \[\raisebox{47pt}{\xymatrix@C=30pt{ \Ralg \displaystyle\pb_{\K} \Ralg \ar[r]^-{\circ} \ar[d]_{\tilde{S} \times_S \tilde{S}} & \Ralg \ar@<1ex>[r]^-{\dom} \ar@<-1ex>[r]_-{\cod} \ar[d]_{\tilde{S}} & \K \ar[l]|-{\id} \ar[d]^S \\ \Falg \displaystyle\pb_{\M} \Falg \ar[r]^-{\circ}  & \Falg \ar@<1ex>[r]^-{\dom} \ar@<-1ex>[r]_-{\cod} & \M \ar[l]|-{\id}}} \]
A lifted functor, $\Ralg \to \Falg$ preserves composition of certain free algebras if and only if the characterizing natural transformation satisfies a pentagon involving the comultiplication. See I.6.9 for more details.
\end{proof}

\subsection{Adjunctions arising from two-variable adjunctions}\label{arisingssec}

Recall the notational conventions introduced in \ref{terriblentn}. We consider adjunctions \begin{equation}\label{iadj} \xymatrix{i \hast - \colon \M^\2 \ar@<1ex>[r] \ar@{}[r]|-{\perp} & \ar@<1ex>[l] \N^\2 \colon \hhoml(i,-)}\end{equation} obtained by fixing $i \colon A \to B \in \K$ in the induced two-variable adjunction (\ref{twovaradj}). Because the right closure $\homr$ won't appear in this section, we abbreviate $\hhoml$ to $\hhom$ and use exponential notation for $\homl$.
 We want to extend the definitions of \S\ref{adjssec} to include functors of the form \eqref{iadj}.
 
 By Lemma \ref{appellem} and its dual, lifts of $i \hast-$ and $\hhom(i,-)$ to functors on coalgebras and algebras correspond to natural transformations \[ \xymatrix{i \hast C \ar@{=>}[r]^-{\vec{\l(i)}} & L(i\hast-)} \quad \mathrm{and} \quad \xymatrix{F\hhom(i,-) \ar@{=>}[r]^-{\vec{\r(i)}} & \hhom(i,R),}\] that satisfy (co)unit and (co)multiplication conditions. The (co)unit condition defines the domain of $\vec{\l(i)}$ and the codomain of $\vec{\r(i)}$ in such a way that the (co)mul\-tip\-li\-ca\-tion condition for that component is automatic. Write $\l(i) =\cod \vec{\l(i)}$ and $\r(i) = \dom \vec{\r(i)}$ for the non-trivial components.  The (co)multiplication conditions for $\l(i)$ and $\r(i)$ are pentagons which appear in the statement of Lemma \ref{adjawfsredefn}. The (co)unit conditions are expressed by saying that $\l(i)$ and $\r(i)$ define \emph{colax} and \emph{lax morphisms of functorial factorizations}:
\begin{equation}\label{laxgammarho}\xymatrix@R=10pt@C=10pt{ {A \otimes Qj \displaystyle\po_{A \otimes J} B \otimes J} \ar[dd]_{i \hast Cj} \ar[rr]^-{A \otimes Fj \sqcup 1} & & A \otimes K \displaystyle\po_{A \otimes J} B \otimes J \ar[dd]^{L(i \hast j)} &  X^B \ar@{=}[rr] \ar[dd]_{C\hhom(i,f)} & & X^B \ar[dd]^{\hhom(i,Lf)} \\ \\ B \otimes Qj \ar[rr]^{\l(i)_j} \ar[dd]_{i \otimes Fj} & & E(i\hast j) \ar[dd]^{R(i \hotimes j)}  &  Q\hhom(i,f) \ar[dd]_{F \hhom(i,f)} \ar[rr]^{\rho(i)_f} & & Ef^B \ar[dd]^{\hhom(i,Rf)} \\ \\ B \otimes K \ar@{=}[rr] & & B \otimes L &  Y^B \displaystyle\pb_{Y^A} X^A \ar[rr]_-{1 \times Lf^A } & & Y^B \times_{Y^A} Ef^A}\end{equation}

\begin{defn}\label{newadjawfsdefn} The functors $i \hast -$ and $\hhom(i,-)$ form an \emph{adjunction of awfs} $(\C,\F) \to (\L,\R)$ if there exist mates $\l(i)$ and $\r(i)$ with respect to the adjunctions 
\begin{equation}\label{eq:imates}\xymatrix@C=50pt@R=30pt{ \M^\2 \ar[r]^Q \ar@<-1ex>[d]_{i\hast -} \ar@{}[d]|{\dashv} & \M \ar@<-1ex>[d]_{B\otimes -} \ar@{}[d]|{\dashv} \\ \N^\2 \ar@<-1ex>[u]_{\hhom(i,-)} \ar[r]_E & \N \ar@<-1ex>[u]_{(-)^B}}\end{equation}
such that $\vec{\l(i)}$ and $\vec{\r(i)}$ determine lifts of $i \hast -$ and $\hhom(i,-)$.
\end{defn} 

In analogy with Definition \ref{defn:adjawfs}, any such adjunction of awfs is determined by the pair $(i \hast -, \l(i))$ or the pair $(\hhom(i,-), \r(i))$ alone. This is the result of Lemma \ref{adjawfsredefn} below. Its proof, via Theorem \ref{dblmatesthm}, requires some preparation. Unlike adjunctions of the form $T^\2 \colon \M^\2 \rightleftarrows \N^\2 \colon S^\2$ defined pointwise by some adjunction $T \colon \M \rightleftarrows \N \colon S$, the functors $i \hast -$ and $\hhom(i,-)$ do not preserve composability of vertical arrows. Instead, they induce an adjunction between the categories of composable triples of arrows in $\M$ and in $\N$. Writing $\iota$ and $\pi$ for the obvious legs of the pushout and pullback cones, the diagram
\[ \xymatrix@R=10pt@C=15pt{ A \otimes J \displaystyle\po_{A \otimes I} B \otimes I \ar[dd]_{i\hast j} \ar[rr]^-{a \sqcup b} & & X \ar[dd]^f & & I\ar[dd]_j  \ar[rr]^{b^\sharp} & & X^B \ar[dd]^{\hhom(i,f)} \\ \\ B \otimes J \ar[d]_{B \otimes k} \ar[rr]^c & & Y\ar[dd]^g & &  J \ar[dd]_k \ar@{-->}[drr]_{c^\sharp}\ar[rr]^-{c^\sharp \times a^\sharp} & & Y^B \times_{Y^A} X^A \ar[d]^{\pi} \\ B \otimes K \ar@{-->}[drr]^e \ar[d]_{\iota} & &  & \leftrightsquigarrow & & & Y^B \ar[d]^{g^B} \\ A \otimes L \displaystyle\po_{A \otimes K} B \otimes K \ar[rr]_-{d \sqcup e} \ar[dd]_{i \hast l} & & Z \ar[dd]^h & & K \ar[dd]_l \ar[rr]^{e^\sharp} & & Z^B \ar[dd]^{\hhom(i,h)} \\ \\ B \otimes L \ar[rr]^z & & W & & L \ar[rr]^-{z^\sharp \times d^\sharp} & & W^B \times_{W^A} Z^A}\]
exhibits the adjoint correspondence: the top and bottom squares correspond via $i \hast - \dashv \hhom(i,-)$ and the middle quadrangles correspond via $B \otimes - \dashv (-)^B$. Because our focus will be on the top and bottom squares, we denote this adjunction by \[ (i \otimes -, i \otimes -) \colon \M^\4 \rightleftarrows \N^\4 \colon (\hhom(i,-),\hhom(i,-))\] 

We give another presentation of the mates correspondence of \eqref{eq:imates} that captures the full data of $\vec{\l(i)}$ and $\vec{\r(i)}$.  Adopting simplicial notation, write $s_1$ for precomposition with the functor $\4 \to \3$ that collapses the middle two objects of $\4$ to the middle object of $\3$. We consider mates with respect to the adjunctions 
\begin{equation}\label{biggammarho}\xymatrix@C=40pt@R=30pt{ \M^\2 \ar[r]^{\vec{Q}} \ar@<-1ex>[d]_{i \hast -} \ar@{}[d]|{\dashv} & \M^\3 \ar[r]^{s_1} & \M^\4 \ar@<-1ex>[d]_{(i\hast-,i\hast-)} \ar@{}[d]|{\dashv} \\ \N^\2 \ar[r]_{\vec{E}} \ar@<-1ex>[u]_{\hhom(i,-)} & \N^\3 \ar[r]_{s_1} & \N^\4 \ar@<-1ex>[u]_{(\hhom(i,-),\hhom(i,-))}}\end{equation} whose components at $j \colon J \to K \in \M$ and $f \colon X \to Y \in \N$ are
\begin{equation}\label{gammarhodefn}\small{\xymatrix@R=10pt@C=10pt{ {A \otimes Qj \displaystyle\po_{A \otimes J} B \otimes J} \ar[dd]_{i \hast Cj} \ar[rr]^-{A \otimes Fj \sqcup 1} & & A \otimes K \displaystyle\po_{A \otimes J} B \otimes J \ar[dd]^{L(i \hast j)} &  X^B \ar@{=}[rr] \ar[dd]_{C\hhom(i,f)} & & X^B \ar[dd]^{\hhom(i,Lf)} \\ \\ B \otimes Qj \ar[rr]^{\l(i)_j} \ar[dd]_{\iota} & & E(i\hast j) \ar@{=}[dd] &  Q\hhom(i,f) \ar@{=}[dd] \ar[rr]^-{\rho'(i)_f} & & Ef^B \times_{Ef^A} X^A \ar[dd]^{\pi} \\ & &  & & & \\ A \otimes K \displaystyle\po_{A \otimes Qj} B \otimes Qj \ar[dd]_{i \hast Fj} \ar[rr]^-{\l'(i)_j} & & E (i \hast j) \ar[dd]^{R(i \hast j)} &  Q\hhom(i,f) \ar[dd]_{F \hhom(i,f)} \ar[rr]^{\rho(i)_f} & & Ef^B \ar[dd]^{\hhom(i,Rf)} \\ \\ B \otimes K \ar@{=}[rr] & & B \otimes K &  Y^B \displaystyle\pb_{Y^A} X^A \ar[rr]_-{1 \times Lf^A } & & Y^B \times_{Y^A} Ef^A}}\end{equation} We only consider pairs in which the left-hand mate defines a colax morphism of functorial factorizations and the right-hand mate defines a lax morphism of functorial factorizations. This requirement implies that the top-left and lower-right horizontal arrows have the form displayed in \eqref{gammarhodefn} and also imposes conditions on $\l'(i)$ and $\r'(i)$.

Obviously $\l'(i)$ determines $\l(i)$; under the hypothesis that the left-hand side is the mate of a lax morphism of functorial factorizations, the converse also holds. One leg of the cone defining $\l'(i)$ is $\l(i)$ and the other is necessarily a leg of the cone defining $L(i \hast -)$ on account of the appearance of the functor $L$ in the bottom component of the right-hand natural transformation. Similarly, when the mate of the right-hand side is a colax morphism of functorial factorizations, $\r(i)$ determines $\r'(i)$; its other leg is  a leg of the cone defining $F\hhom(i,-)$.

Extending the notation introduced above, write $\vec{\l(i)}$, $\vec{\l'(i)}$, $\vec{\rho'(i)}$, and $\vec{\rho(i)}$ for the natural transformations of the upper left, lower left, upper right, and lower right squares of (\ref{gammarhodefn}), respectively. Note $\vec{\l(i)}$ and $\vec{\rho'(i)}$ are mates and $\vec{\l'(i)}$ and $\vec{\rho(i)}$ are mates with respect to 
\[\xymatrix@C=50pt@R=15pt{ \M^\2 \ar[r]^{C} \ar@<-1ex>[dd]_{i \hast -} \ar@{}[dd]|{\dashv} & \M^\2 \ar@<-1ex>[dd]_{i\hast-} \ar@{}[dd]|{\dashv} & & \M^\2 \ar@<-1ex>[dd]_{i \hast -} \ar@{}[dd]|{\dashv} \ar[r]^F & \M^\2 \ar@<-1ex>[dd]_{i \hast -} \ar@{}[dd]|{\dashv}\\ & & \text{and} & &  \\ \N^\2 \ar[r]_{L} \ar@<-1ex>[uu]_{\hhom(i,-)} & \N^\2 \ar@<-1ex>[uu]_{\hhom(i,-)} & & \N^\2 \ar@<-1ex>[uu]_{\hhom(i,-)} \ar[r]_R & \N^\2 \ar@<-1ex>[uu]_{\hhom(i,-)}}\] respectively. Indeed $\l(i)$ and $\r(i)$ are mates if and only if \eqref{gammarhodefn} are mates, by Theorem \ref{dblmatesthm} and a diagram chase left to the reader.

With (\ref{gammarhodefn}), we can now be more explicit about the conditions on the natural transformations that satisfy Definition \ref{newadjawfsdefn}.

\begin{lem}\label{adjawfsredefn} An adjunction of awfs $(i \hast -, \hom(i,-))\colon (\C,\F) \to (\L,\R)$ is determined by either 
\begin{itemize} 
\item a \emph{colax morphism of awfs} $(i \hotimes -,\lambda(i)) \colon (\C,\F) \to (\L,\R)$, i.e., a natural transformation $\l(i)$ satisfying \emph{(\ref{laxgammarho})} and the pentagons
\[\small{\xymatrix@C=32pt@R=20pt@!0{ & i \hast C \ar[ddl]_{i \hast \vec{\d}} \ar[rr]^{\vec{\l(i)}} & & L(i \hast -) \ar[ddr]^{\vec{\d}_{i\hast-}} & &&  & R(i \hast F) \ar[drr]^{R\vec{\l'(i)}} & & \\ & & & & & **[r]i \hast F^2 \ar[ddr]_{i\hast \vec{\mu}} \ar[urr]^{\vec{\l'(i)_F}} & & & & **[l]R^2(i \hast -) \ar[ddl]^{\vec{\mu}_{i\hast-}} \\ **[r]i \hast C^2 \ar[drr]_{\vec{\l(i)_C}} & & & & **[l]L^2(i \hast -) & & & & & \\ & & L(i \hast C) \ar[urr]_{L\vec{\l(i)}} & & & & i \hast F \ar[rr]_{\vec{\l'(i)}} & & R(i \hast -) & } } 
\] 
\item a \emph{lax morphism of awfs} $(\hhom(i,-),\rho(i)) \colon (\L,\R) \to (\C,F)$, i.e., a natural transformation $\r(i)$ satisfying \emph{(\ref{laxgammarho})} and the pentagons
\[\small{\xymatrix@C=28pt@R=20pt@!0{ &  \save[]C\hhom(i,-) \ar[ddl]_{\vec{\d}_{\hhom(i,-)}} \ar[rr]^{\vec{\r'(i)}}\restore & & \save[] \hhom(i,L) \ar[ddr]^{\hhom(i,\vec{\d})}\restore &   \\ & & &  &  \\ **[r]\save[]C^2\hhom(i,-) \ar[drr]_{C\vec{\r'(i)}}\restore & & & & **[l]\save[]\hhom(i,L^2)\restore  \\ & & \save[] C\hhom(i,L) \ar[urr]_{\vec{\r'(i)_L}} \restore& & }\quad \xymatrix@C=28pt@R=20pt@!0{&  & F\hhom(i,R) \ar[drr]^{\vec{\rho(i)_R}} & & \\ **[r]F^2\hhom(i,-) \ar[ddr]_{\vec{\mu}_{\hhom(i,-)}} \ar[urr]^{F\vec{\rho(i)}} & & & & **[l]\hhom(i,R^2) \ar[ddl]^{\hhom(i,\vec{\mu})} \\ & & & & \\  & F\hhom(i,-) \ar[rr]_{\vec{\rho(i)}} & & \hhom(i,R) &}} 
\] 
\end{itemize} 
\end{lem}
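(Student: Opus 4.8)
The plan is to prove Lemma~\ref{adjawfsredefn} by exhibiting, in each of the two bulleted cases, that the promised natural transformation suffices to recover all of the data of an adjunction of awfs in the sense of Definition~\ref{newadjawfsdefn}, and that the two cases are interchanged by the mates correspondence. First I would start from a colax morphism of awfs $(i\hotimes-,\l(i))$: by hypothesis $\l(i)$ satisfies the colax triangle conditions \eqref{laxgammarho}, so it defines a colax morphism of functorial factorizations, and hence, as in the discussion following Definition~\ref{defn:colaxfunfact}, it defines a lift of $i\hast-$ to a functor $\Calg\to\Lalg$; the extra pentagons are exactly the comultiplication and multiplication conditions (phrased as in \S\ref{arisingssec}) that promote this to a lift of the \emph{comonad} $\C$-coalgebras to $\L$-coalgebras compatibly with composition, i.e.\ to a colax morphism of awfs.

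Next I would take the mate $\r(i)$ of $\l(i)$ with respect to the adjunction~\eqref{eq:imates}. Here is where I invoke the calculus of mates: by Theorem~\ref{dblmatesthm}, since $\l(i)$ is obtained by pasting the two squares $\vec{\l(i)}$ and $\vec{\l'(i)}$ of \eqref{gammarhodefn}, its mate $\r(i)$ is obtained by pasting the mate squares $\vec{\rho'(i)}$ and $\vec{\rho(i)}$; the remark in \S\ref{arisingssec} (that $\vec{\l(i)}$ and $\vec{\rho'(i)}$, resp.\ $\vec{\l'(i)}$ and $\vec{\rho(i)}$, are mates with respect to the indicated adjunctions between $\M^\2$ and $\N^\2$) is precisely what makes this pasting compatible. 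Consequently the triangle identities \eqref{laxgammarho} for $\l(i)$ transform into the corresponding triangle identities for $\r(i)$, so $\r(i)$ is a lax morphism of functorial factorizations; and applying Lemma~\ref{mateslem} (in its comonad and monad forms) to each of the two pentagons for $\l(i)$ shows that the two pentagons for $\r(i)$ hold, since those pentagons are again equalities of pasted squares in $\L${\bf adj} whose mates are the pentagons in $\R${\bf adj}. Hence $(\hhom(i,-),\r(i))$ is a lax morphism of awfs, and $\l(i),\r(i)$ are mates, so by Definition~\ref{newadjawfsdefn} they assemble into an adjunction of awfs. The converse direction, starting from a lax morphism of awfs and passing to the mate, is entirely symmetric.

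Finally I would observe that this establishes the claimed "either/or": a colax morphism of awfs $(i\hotimes-,\l(i))$ determines, via the mate, a unique lax morphism of awfs $(\hhom(i,-),\r(i))$ and vice versa, and both determine the same adjunction of awfs; conversely any adjunction of awfs restricts to both pieces of data, which are then automatically mates. The only real content beyond bookkeeping is the translation of pentagons through the mates correspondence, which is exactly the pattern of Lemma~\ref{mateslem}, together with the identification (already done in \S\ref{arisingssec}) of $\l(i)$ as a composite of two mate-able squares rather than a single square---the novelty being that $i\hast-$ and $\hhom(i,-)$ do not preserve composability, so one must mate against the auxiliary adjunction~\eqref{biggammarho} on triples. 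The main obstacle will be verifying carefully that the ``(co)unit condition determines the missing component'' bookkeeping from \S\ref{arisingssec} is consistent on both sides simultaneously---that is, checking that the leg of the $L(i\hast-)$-cone (resp.\ the leg of the $F\hhom(i,-)$-cone) forced by the colax/lax functorial-factorization hypothesis is the one the mate actually produces---but this is the diagram chase explicitly flagged as ``left to the reader'' in the paragraph preceding the lemma, and beyond it the proof is a formal application of Theorems~\ref{dblmatesthm} and Lemmas~\ref{appellem}, \ref{mateslem}.
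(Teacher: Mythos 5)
Your proposal is correct and follows essentially the same route as the paper: the paper's proof is just the observation that when $\l(i)$ and $\r(i)$ are mates, so are the pairs $\vec{\l(i)},\vec{\r'(i)}$ and $\vec{\l'(i)},\vec{\r(i)}$, whence Theorem~\ref{dblmatesthm} transfers each pentagon from one column to the other---precisely your pasting argument in the pattern of Lemma~\ref{mateslem}. Your additional remarks about the triangles and the ``missing leg'' bookkeeping correspond to the discussion and reader-supplied diagram chase preceding the lemma in \S\ref{arisingssec}, so nothing essential differs.
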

\begin{proof}
When $\l(i)$ and $\r(i)$ are mates, so are $\vec{\l(i)}$ and $\vec{\r'(i)}$ and $\vec{\l'(i)}$ and $\vec{\r(i)}$; hence, by Theorem \ref{dblmatesthm}, the top pentagon in each column commutes if and only if the bottom one does.
\end{proof}

\subsection{Composition criterion}\label{compssec}

The idea for our composition criterion begins with the following observation about the vertical composition law for $\J^\boxslash$ defined in \eqref{boxcomp}. The square $(f,1) : (gf, \phi_g \bullet \phi_f) \to (g, \phi_g)$ is a morphism in $\J^\boxslash$, but $(1,g) \colon f \To gf$ is not. However this latter map, appearing as the middle square below, does preserve solutions to some lifting problems: namely those, depicted in the left hand square below, whose bottom arrow is the solution specified by $\phi_g$ to the composite lifting problem  \begin{equation}\label{someliftspreserved}\xymatrix@=30pt{ \cdot \ar[d]_j \ar[r]^a & \cdot \ar[d]_(.4)f \ar@{=}[r] & \cdot \ar[d]^{gf} \ar[r]^f & \cdot \ar[d]^g \\ \cdot \ar@/_1pc/[rrr]_b \ar@{-->}[urrr]|{\phi_g(j,fa,b)} \ar@{-->}[r] & \cdot \ar[r]_g & \cdot \ar@{=}[r] & \cdot}\end{equation}

\begin{rmk}
Similarly, for any awfs $(\L,\R)$ and composable $\R$-algebras $f$ and $g$, the square $(f,1) \colon gf \To g$ is a morphism of $\R$-algebras, while $(1,g) \colon f \To gf$ only preserves the canonical solutions to lifting problems of the form (\ref{someliftspreserved}). 
\end{rmk}

Composing a lifted functor $\hhom(i,-) \colon \Ralg \to \Falg$ with the embedding (\ref{eq:liftfunctor}), $\R$-algebras are mapped to arrows which have chosen solutions to lifting problems against $\C$-coalgebras. If these chosen solutions satisfy the criterion of the following theorem, then the lifted functor determines an adjunction of awfs.

\begin{thm}[Composition criterion]\label{compcritthm} A lifted functor $\hhom(i,-)\colon \Ralg \to \Falg$ defines a lax morphism of awfs if and only if the lifting functions assigned to a composable pair $f,g \in \Ralg$ have the following property: given a lifting problem $(a,b\times c)$ between a $\C$-coalgebra $j$ and $\hhom(i,gf)$, composition with the right square of the rectangle below determines a lifting problem against $\hhom(i,g)$. The chosen solution $d$ determines a lifting problem against $\hhom(i,f)$ whose solution $e$ should be the chosen solution to the original lifting problem. 
\begin{equation}\label{crazycomp} \xymatrix@R=30pt{ J \ar[r]^a \ar[d]_j & X^B \ar[d]|(.4){\hhom(i,f)} \ar@{=}[r] & X^B \ar[d]|(.6){\hhom(i,gf)} \ar[r]^{f^B} & Y^B \ar[d]|{\hhom(i,g)} \\ K \ar@{-->}[ur]^e \ar@{-->}[urr]^(.7)e \ar@{-->}[urrr]_(.8)d \ar@{-->}[r]_(.4){d \times c} \ar@/_2pc/[rr]_{b \times c} &  Y^B \times_{Y^A} X^A \ar[r]_{g^B \times_{g^A} 1} & Z^B \times_{Z^A} X^A \ar[r]_{1 \times_1 f^A} & Z^B \times_{Z^A} Y^A}\end{equation}
\end{thm}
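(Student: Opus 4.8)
The plan is to recognize the diagram \eqref{crazycomp} as the unwinding of the comultiplication pentagon that distinguishes a lax morphism of awfs from a mere lax monad morphism, and to reduce the theorem to a chain of equivalences already available. By Lemma \ref{appellem}, a lift $\hhom(i,-)\colon\Ralg\to\Falg$ between categories of monad algebras is exactly a lax monad morphism $(\hhom(i,-),\vec{\rho(i)})$: it automatically satisfies the unit condition \eqref{laxgammarho} — that is, $\rho(i)$ is a lax morphism of functorial factorizations — together with the multiplication pentagon. By Lemma \ref{adjawfsredefn}, the single further condition needed to make this a lax morphism of awfs is the comultiplication pentagon for $\vec{\rho'(i)}$, or equivalently, via Theorem \ref{dblmatesthm} and the mate correspondence \eqref{gammarhodefn}, the comultiplication pentagon for $\vec{\l(i)}$. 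So the theorem reduces to showing that this comultiplication pentagon holds if and only if the lifting functions satisfy \eqref{crazycomp}.

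To identify the pentagon with a composition statement I would use the double-categorical picture of \S\ref{dblcatssec}. For a genuine functor $S$, Lemma \ref{laxmorlem} identifies the comultiplication pentagon with preservation of the canonical vertical composition of algebras, i.e. with lifting to a double functor $\Alg(\R)\to\Alg(\F)$. The functors $i\hast-$ and $\hhom(i,-)$ do not preserve composability of vertical arrows, but they induce the adjunction $(i\hast-,i\hast-)\colon\M^\4\rightleftarrows\N^\4\colon(\hhom(i,-),\hhom(i,-))$ of \eqref{biggammarho}; under this reinterpretation the pentagon for $\vec{\rho'(i)}$ becomes the requirement that, for composable $f,g\in\Ralg$, the $\F$-algebra structure the lift assigns to $\hhom(i,gf)$ agree with the canonical $\bullet$-composite — the dual of formula \eqref{coalgcomp} — of the structures on $\hhom(i,g)$ and $\hhom(i,f)$, formed along the comparison squares $(f^B,1\times_1 f^A)\colon\hhom(i,gf)\To\hhom(i,g)$ and $(1,g^B\times_{g^A}1)\colon\hhom(i,f)\To\hhom(i,gf)$. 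Here one first checks that these squares are the images under $\hhom(i,-)$ of the squares $(f,1)\colon gf\To g$ and $(1,g)\colon f\To gf$ of \eqref{someliftspreserved}, that $\hhom(i,gf)=(g^B\times_{g^A}1)\circ\hhom(i,f)$, and hence that the $s_1$-image of a composable pair in $\N$ really is a ``composable pair'' of the kind to which the composite-algebra formula applies.

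The final step converts this equality of $\F$-algebra structures into the lifting-function statement \eqref{crazycomp}. Here Lemma \ref{liftcomplem} does the work: $\mathrm{lift}\colon\Falg\to\Calg^\boxslash$ is faithful and preserves composition of algebras, with the composite lifting function computed by formula \eqref{boxcomp}. Since an $\F$-algebra structure is determined by its lifting function, the $\bullet$-composite of the structures on $\hhom(i,g)$ and $\hhom(i,f)$ equals the structure on $\hhom(i,gf)$ precisely when the corresponding composite of their lifting functions, via \eqref{boxcomp}, equals the lifting function of $\hhom(i,gf)$; and spelling \eqref{boxcomp} out along the comparison squares above is exactly \eqref{crazycomp} — given $(a,b\times c)$ against $\hhom(i,gf)$, first solve the problem obtained by composing with $(f^B,1\times_1 f^A)$ against $\hhom(i,g)$ to get $d$, then solve $(a,d\times c)$ against $\hhom(i,f)$ to get $e$, and demand that $e$ be the chosen solution against $\hhom(i,gf)$. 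Running this chain of equivalences in both directions proves the theorem; the single-variable argument then extends verbatim to the two-variable setting.

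I expect the main obstacle to be the bookkeeping in the middle step: matching the pullback objects $Y^B\times_{Y^A}X^A$, $Z^B\times_{Z^A}X^A$, $Z^B\times_{Z^A}Y^A$ and the legs $g^B\times_{g^A}1$, $1\times_1 f^A$ with the generic composite-algebra formulas \eqref{coalgcomp} and \eqref{boxcomp} through the adjunction $\M^\4\rightleftarrows\N^\4$, and verifying that Lemma \ref{laxmorlem} survives the passage to this non-composability-preserving setting, so that the double-functor condition remains equivalent to the comultiplication pentagon once one restricts to the composites detected by $s_1$. The faithfulness of $\mathrm{lift}$ is what licenses recovering the pentagon from the bare lifting-function identity; the rest is diagram chasing with the explicit formulas \eqref{gammarhodefn}, \eqref{coalgcomp}, and \eqref{boxcomp}.
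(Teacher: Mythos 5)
Your opening reduction coincides with the paper's: by Lemmas \ref{appellem} and \ref{adjawfsredefn} the lifted functor is a lax morphism of awfs precisely when the comultiplication pentagon for $\vec{\r'(i)}$ holds, so the theorem amounts to showing that this pentagon is equivalent to the lifting-function condition \eqref{crazycomp}. The gap is in how you propose to establish that equivalence. Lemma \ref{laxmorlem} cannot be ``restricted to the composites detected by $s_1$'': the arrows $\hhom(i,f)\colon X^B\to Y^B\times_{Y^A}X^A$ and $\hhom(i,g)\colon Y^B\to Z^B\times_{Z^A}Y^A$ are not composable in $\M^\2$, so there is no canonical $\bullet$-composite of their $\F$-algebra structures to which \eqref{coalgcomp}, Lemma \ref{liftcomplem}, or formula \eqref{boxcomp} applies. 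The assertion that the pentagon for $\vec{\r'(i)}$ ``becomes'' the statement that the structure assigned to $\hhom(i,gf)$ agrees with a composite-like structure built along the comparison squares is exactly the content of the theorem; it is not delivered by the double-categorical lemmas, whose proofs use that a genuine lifted functor sends composites to composites. What you defer as bookkeeping is the entire proof.

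Concretely, two things are missing. The forward direction (pentagon $\Rightarrow$ \eqref{crazycomp}) is indeed a computation, but one that must be carried out: it uses naturality of $\r(i)$, a comonad triangle identity, and a factorization of the relevant lifting problem, with the pentagon entering at the last step. More seriously, the converse direction cannot be obtained by ``running the chain of equivalences backwards,'' because the intermediate equivalence you invoke (your extension of Lemma \ref{laxmorlem}) is itself unproven and its proof requires a specific idea absent from your proposal: apply the composition criterion to the particular composable pair of free algebras $(RLf,\mu_{Lf})$ and $(Rf,\mu_f)$, and use the reconstruction \eqref{deltadefn} of the comultiplication $\d$ from the algebra structure of their composite (Lemma \ref{awfscharlem}) to recover the pentagon for an arbitrary $f$. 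This is how the paper closes the loop; without it, your argument reduces the theorem to a lemma whose proof is the theorem.
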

\begin{proof}
By Lemmas \ref{appellem} and \ref{adjawfsredefn}, the lifted functor $\Ralg \to \Falg$ determines a lax morphism of awfs if and only if its characterizing natural transformation $\r(i)$ is such that the left-hand pentagon 
%\[\xymatrix@C=40pt@R=20pt@!0{ & Q\hhom(i,f) \ar[rr]^{\r'(i)_f} \ar[ddl]_(.6){\d_{\hhom(i,f)}} & & Ef^B \times_{Ef^A} X^A \ar[ddr]^(.7){ \d_f^B \times_{\d_f^A} 1} & && & {Q\hhom(i,f) \ar[rr]^-{\r(i)_f} \ar[ddl]_(.6){\d_{\hhom(i,f)}}} & & Ef^B \ar[ddr]^(.6){\d^B_f}  \\ \\ {QC \hhom(i,f) \ar[drr]_(.4){Q(1,\r'(i)_f)}} & & & &  {ELf^B \times_{ ELf^A} X^A}   && {QC\hhom(i,f) \ar[drr]_(.4){Q(1,\r'(i)_f)}} & & & & ELf^B  \\ & & Q\hhom(i,Lf) \ar[urr]_{\r'(i)_{Lf}}  && & & & &  Q\hhom(i,Lf) \ar[urr]_{\r(i)_{Lf}} } \]
\[{\xymatrix@C=38pt@R=20pt@!0{ & & & & & Q\hhom(i,f) \ar[rr]^-{\r(i)_f} \ar[ddl]_(.6){\d_{\hhom(i,f)}} & & Ef^B \ar[ddr]^(.6){\d^B_f} \\ \\ & & & & **[r]{QC\hhom(i,f) \ar[drr]_(.4)*+{\labelstyle Q(1,\r'(i)_f)}} & & & & ELf^B \\ & Q\hhom(i,f) \ar[rr]^{\r'(i)_f} \ar[ddl]_(.6){\d_{\hhom(i,f)}} & & Ef^B \times_{Ef^A} X^A \ar[ddr]^(.6){ \d_f^B \times_{\d_f^A} 1} & & & Q \hhom(i,Lf)  \ar[urr]_(.6){\r(i)_{Lf}}  \\ \\ **[r]{QC \hhom(i,f) \ar[drr]_(.4)*+{\labelstyle Q(1,\r'(i)_f)}} & & & &  **[l]{ELf^B \times_{ ELf^A} X^A}   \\ & & Q\hhom(i,Lf) \ar[urr]_{\r'(i)_{Lf}}  && & }} \]
commutes. Projecting to one leg of the pullbacks, the left pentagon implies the right one, but an easy diagram chase---the essential point being that the other leg of $\r'(i)_f$ is defined to be a leg of $F\hhom(i,f)$---shows that the right pentagon also implies the left. Thus, it suffices to prove that the lifted functor satisfies the composition criterion if and only if this right-hand pentagon commutes.

Suppose $\hhom(i,-) \colon \Ralg \to \Falg$ is a lax morphism of awfs and consider composable $\R$-algebras $(f,s)$ and $(g,t)$. The lifted functor assigns the image of their composite $(gf, t\bullet s)$ the $\F$-algebra structure 
\begin{equation}\label{onedefn} \xymatrix@R=5pt@C=16pt{Q \hhom(i,gf) \ar[r]_-{\r(i)_{gf}} & E(gf)^B \ar[r]_{(t \bullet s)^B} & X^B = \\ Q \hhom(i,gf) \ar[r]_-{\r(i)_{gf}} & E(gf)^B \ar[r]_-{\d_{gf}^B} & EL(gf)^B \ar[rr]_-{E(1,E(f,1))^B} & & E (Lg \cdot f)^B \ar[r]_-{E(1,t)^B} & Ef^B \ar[r]_-{s^B} & X^B}\end{equation}
As for any $\F$-algebra structure, this map is the canonical solution assigned to the lifting problem \[\xymatrix@C=35pt{ X^B \ar@{=}[r] \ar[d]_{C \hhom(i,gf)} & X^B \ar[d]^{\hhom(i,gf)} \\ Q \hhom(i,gf) \ar[r]_{F\hhom(i,gf)} & Z^B \times_{Z^A} X^A} \] The composition criterion says that (\ref{onedefn}) should be obtained in the following manner. First, solve the composite lifting problem \[ \xymatrix@C=30pt{ X^B \ar@{=}[r] \ar[dd]_{C\hhom(i,gf)} & X^B \ar[r]^{f^B} \ar[dd] & Y^B \ar[dd]^{\hhom(i,g)} \\ {~} \ar@{-->}[r]^{Q(1, F\hhom(i,gf))} & {~} \ar@{-->}[r]^{Q(f^B, 1 \times_1 f^A)} & {~}\ar@{-->}@<1ex>[u]^{t^B\cdot \r(i)_g} \\ Q \hhom(i,gf) \ar@{-->}[ur]_1 \ar[r]_{F\hhom(i,gf)} \ar@{-->}@<-1ex>[u]_{\d} & Z^B \times_{Z^A} X^A \ar[r]_{1 \times_1 f^A} & Z^B \times_{Z^A} Y^A}\] in the manner displayed using the awfs $(\C,\F)$ and the $\F$-algebra structure assigned to $\hhom(i,g)$. The first two arrows in this lift compose to the identity by a triangle identity for the comonad $\C$; hence, by naturality of $\r(i)$, the canonical solution to this lifting problem is 
\[\xymatrix{ Q \hhom(i,gf) \ar[r]^-{\r(i)_{gf}} & E(gf)^B \ar[r]^-{E(f,1)^B} & Eg^B \ar[r]^{t^B} & Y^B}.\]
This arrow defines the other leg of the lifting problem \[\xymatrix{ X^B \ar@{=}[r] \ar[d]_{C \hhom(i,gf)} & X^B \ar[d]^{\hhom(i,f)} \\ Q \hhom(i,gf) \ar[r] & Y^B \times_{Y^A} X^A} \] whose canonical solution must agree with the $\F$-algebra structure of $\hhom(i,gf)$. This lifting problem factors as \[\xymatrix{ X^B \ar@{=}[r] \ar[d]|{C \hhom(i,gf)} & X^B \ar[d]|{\hhom(i,L(gf))} \ar@{=}[r] & X^B \ar[d]|{\hhom(i, Lg\cdot f)} \ar@{=}[r] & X^B \ar[d]|{\hhom(i,f)} \\ Q \hhom(i,gf) \ar[r]_{\r'(i)_{gf}} & E(gf)^B \times_{E(gf)^A} X^A \ar[r]_{E(f,1)^B \times_{E(f,1)^A} 1} & Eg^B \times_{Eg^A} X^A \ar[r]_{t^B \times_{t^A} 1} & Z^B \times_{Z^A} X^A} \] so its canonical solution, by naturality of $\r(i)$, is the composite 
\[\xymatrix@C=30pt{ Q\hhom(i,gf) \ar[r]^-{\d_{\hhom(i,gf)}} & QC\hhom(i,gf) \ar[r]^-{Q(1,\r'(i)_{gf})} & Q\hhom(i, L(gf)) \ar[r]^-{\r(i)_{L(gf)}} & EL(gf)^B \cdots}\]\[\xymatrix@C=30pt{ \cdots \ar[r]^-{E(1, E(f,1))^B} & E(Lg \cdot f)^B \ar[r]^-{E(1,t)^B} & Ef^B \ar[r]^{s^B} & X^B}\] 
which agrees with (\ref{onedefn}) if the pentagon for $gf$ is satisfied.

Conversely, suppose the lifted functor satisfies the composition criterion. Consider the composable pair of free $\R$-algebras $\xymatrix{ELf \ar[r]^{RLf} & Ef \ar[r]^{Rf} & Y}$. Employing the definition (\ref{deltadefn}) of $\d$, the upper right composite of the pentagon is the top composite of 
\[\xymatrix@C=13pt{  & Ef^B \ar[r]^{E(L^2f,1)^B} & E(Rf \cdot RLf)^B \ar[dr]^{(\mu_f \cdot \mu_{Lf})^B} \\ Q\hhom(i,f) \ar[d]_{\d_{\hhom(i,f)}} \ar[ur]^{\r(i)_f} \ar[r]
 & **[r]{Q \hhom(i, Rf \cdot RLf) \ar[ur]_{\r(i)_{Rf \cdot RLf}} \ar[d]^{\d_{\hhom(i, Rf\cdot RLf)}}} & & **[l]{ELf^B} \\ 
QC\hhom(i, f) \ar[r] \ar[d]_{Q(1,\r'(i)_f)} & **[r]{ QC\hhom(i, Rf\cdot RLf) \ar[d]^{Q(1,\r'(i)_{Rf \cdot RLf})}} & & **[l]{ERLf^B} \ar[u]^{\mu_{Lf}^B} \\ Q\hhom(i,Lf) \ar[dr]_{\r(i)_{Lf}} \ar[r] & **[r]{Q\hhom(i, L(Rf \cdot RLf)) \ar[dr]^(.6){\r(i)_{L(Rf \cdot RLf)}}} & & **[l]{E(LRf \cdot RLf)^B \ar[u]^{E(1, \mu_f)^B}} \\ & ELf^B \ar[r]_-{E(L^2f, E(L^2f, 1))^B} & EL(Rf \cdot RLf)^B \ar[ur]_{E(1, E(RLf,1))^B}  }\] The squares commute by naturality of $\r(i)$, $\d$, $\r'(i)$, and $\r(i)$; the definitions of the unlabeled arrows can be deduced from this. The octagon is exactly the composition criterion, in the form just deduced. Hence,the outer decagon commutes. The composite of the last four arrows along the bottom right is $(-)^B$ applied to an identity \[\xymatrix@C=40pt{ EL(Rf \cdot RLf) \ar[r]^-{E(1, E(RLf,1))} & E(LRf\cdot RLf) \ar[r]^-{E(1,\mu_f)} & ERLf \ar[d]^{\mu_{Lf}} \\ ELf \ar[u]^{E(L^2f, E(L^2f,1))} \ar[r]^{E(L^2f,1)} \ar@/_1pc/[rr]_1 & ERLf \ar[u]^{E(1,E(Lf,1))} \ar[ur]_1 & ELf }\] using functoriality of $E$ and two applications of a monad triangle identity. So the exterior of our decagon is the desired pentagon.
\end{proof}

A dual theorem describes those lifts of $i \hast -$ which determine colax morphisms of awfs. The two-variable version is now within reach.

\begin{thm}[Composition criterion]\label{twovarcompthm} Suppose $(\otimes,\homl,\homr) \colon \K \times \M \to \N$ is a two-variable adjunction between categories equipped with awfs $(\C',\F')$, $(\C,\F)$, $(\L,\R)$. A single lifted functor \[ \Cpalg \times \Calg \to \Lalg, \quad \Cpalg^\op \times \Ralg \to \Falg,\]\[\mathrm{or} \quad \Calg^\op \times \Ralg \to \Fpalg\]
specifies a two-variable adjunction of awfs if and only if it satisfies the criterion of Theorem \ref{compcritthm} or its dual, as appropriate, in each variable.
\end{thm}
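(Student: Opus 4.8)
The plan is to reduce the two-variable statement to the single-variable composition criterion of Theorem~\ref{compcritthm} (and its dual), using the parameterized mates correspondence to move freely between the three characterizing natural transformations. By symmetry it suffices to treat one of the three forms of the lifted functor; I would take $\hhoml \colon \Cpalg^\op \times \Ralg \to \Falg$, since fixing a $\C'$-coalgebra in the first slot lands us in exactly the situation of Theorem~\ref{compcritthm}.

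The first step is a reformulation. Fixing a $\C'$-coalgebra $(i,s)$ in $\hhoml$ produces a lifted functor $\hhoml(i,-) \colon \Ralg \to \Falg$ over $\N^\2$, while fixing an $\R$-algebra $(f,t)$ produces a lifted functor $\hhoml(-,f) \colon \Cpalg^\op \to \Falg$ along the adjunction $\hhomr(-,f) \dashv \hhoml(-,f)$ relating $\M^\2$ and $(\K^\2)^\op$. The claim to establish is that a lifted functor $\hhoml$ underlies a two-variable adjunction of awfs in the sense of Definition~\ref{twovaradjdefn} if and only if every such restriction -- one for each $\C'$-coalgebra and one for each $\R$-algebra -- is an adjunction of awfs in the sense of Definition~\ref{newadjawfsdefn} (suitably op'd in the second family). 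Granting this, one applies Theorem~\ref{compcritthm} in each variable: $\hhoml(i,-)$ is a lax morphism of awfs exactly when the lifting functions it assigns to composable pairs in $\Ralg$ satisfy the composition criterion -- which is precisely the composition criterion for $\hhoml$ in the $\Ralg$-variable -- and $\hhoml(-,f)$ is an adjunction of awfs exactly when the dual criterion holds for composable pairs of $\C'$-coalgebras, which is the composition criterion for $\hhoml$ in the $\Cpalg$-variable. This proves the statement for $\hhoml$; the statements for $\hotimes$ and $\hhomr$ then follow because the three lifted functors correspond under the parameterized mates correspondence of Lemma~\ref{paramateslem}, which (by the relevant instances of Lemma~\ref{paracomplem}) carries a composition criterion in a given variable to the composition criterion of Theorem~\ref{compcritthm} or its dual in the corresponding variable, the variance dictating whether one composes algebras or coalgebras.

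It remains to justify the reformulation, which is where the real work lies. One inclusion is immediate: the (co)unit and (co)multiplication conditions packaged into a two-variable adjunction of awfs restrict, on fixing a coalgebra or algebra in one slot, to the (co)unit condition and the pentagons of Lemma~\ref{adjawfsredefn} for the corresponding single-variable adjunction. For the converse, one assembles $\vec{\l}$ and $\vec{\r^r}$ from the families $\l(i)$ and $\r(i)$ by the pointwise-mates mechanism of Lemmas~\ref{natmateslem} and~\ref{paramateslem} -- naturality in the fixed variable coming for free from Lemma~\ref{natmateslem} -- and must then check that the single-variable pentagons, holding for every fixing, force the full two-variable coherence (the pentagon and two hexagons). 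This last point is a form of the multi-functoriality of parameterized mates alluded to after Lemma~\ref{paracomplem}; concretely it is most cleanly handled by first proving, in the spirit of Lemma~\ref{awfscharlem}, that a bicolax (resp.\ bilax) morphism of awfs is determined by its underlying bicolax (resp.\ bilax) morphism of functorial factorizations together with the requirement that every single-variable restriction be a (co)lax morphism of awfs. I expect the main obstacle to be exactly the bookkeeping in this step -- tracking which of the pentagons and hexagons of the two-variable definition corresponds to which single-variable pentagon, and matching the op-variance in the $\R$-algebra slot against the adjunction $\hhomr(-,f) \dashv \hhoml(-,f)$ and the dual composition criterion -- rather than any genuinely new difficulty beyond Theorem~\ref{compcritthm}.
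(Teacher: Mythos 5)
Your overall reduction---fix one variable, apply Theorem \ref{compcritthm} (or its dual) to the restricted lift, and use Lemma \ref{natmateslem} so that the pointwise mates are natural in the fixed variable and assemble into bifunctor lifts---is exactly the skeleton of the paper's proof (which starts, without loss of generality, from $\Cpalg \times \Calg \to \Lalg$ rather than from $\hhoml$; an immaterial difference). The problem lies in how you complete the two-variable structure. Definition \ref{twovaradjdefn} does not ask for a pentagon and two hexagons: it asks for three lifted functors whose characterizing natural transformations $\l$, $\r^\ell$, $\r^r$ are parameterized mates. Your ``reformulation,'' and in particular the unproven auxiliary lemma ``in the spirit of Lemma \ref{awfscharlem}'' asserting that a bicolax morphism of awfs is detected by its single-variable restrictions, is a detour through the formulation the paper deliberately declines to use---and since you locate ``the real work'' precisely in that lemma, your argument as written rests on an unestablished claim that is essentially equivalent to the assembly problem you set out to solve.

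Meanwhile, the verification that Definition \ref{twovaradjdefn} actually requires is absent from your plan: having built, say, the lift of $\hast$ out of the mates $\l(i)$ of the restrictions $\r^\ell(i)$ supplied by Theorem \ref{compcritthm}, one must still show that its characterizing transformation $\l \colon Q' \otimes Q \To E\hast$ is a parameterized mate of the given $\r^\ell \colon Q\hhoml \To \homl(Q',E)$. This is not automatic from the pointwise mate relation between $\l(i)$ and $\r^\ell(i)$: those transformations are indexed by a coalgebra $(i,s)$ and have domain $\cod i \otimes Q-$, whereas the component $\l_{i,-}$ of the bifunctor-level transformation has domain $Q'i \otimes Q-$. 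The paper bridges this by exhibiting $\l_{i,-}$ and $\r^\ell_{i,-}$ as pastings of $\l(C'i)$ and $\r^\ell(C'i)$ with the counit squares $(\vec{\e}_i)_*$ and $(\vec{\e}_i)^*$ in $\L$\textbf{Adj} $\cong$ $\R$\textbf{Adj}, then invoking Theorem \ref{dblmatesthm} and Lemma \ref{paramateslem}. If you replace your coherence-lemma step with this pasting argument (and note that passing among the three forms of the statement is just the cyclic symmetry of a two-variable adjunction, not an application of Lemma \ref{paracomplem}), your outline becomes the paper's proof.
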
 
\begin{proof}
This follows easily from Theorem \ref{compcritthm} and the calculus of parameterized mates. Without loss of generality, suppose given $\Cpalg \times \Calg \to \Lalg$. Evaluating at $i \in \Cpalg$ defines a lifted functor $i \hast - \colon \Calg \to \Lalg$ characterized by a natural transformation $\l(i) \colon \cod i \otimes Q- \To E(i \hast -)$. By Theorem \ref{compcritthm}, the mates $\r^\ell(i) \colon Q\hhoml(i,-) \To \homl(\cod i, E-)$ specify lifted functors $\hhoml(i,-) \colon \Ralg \to \Falg$. Because the $\l(i)$ are natural in $\Cpalg$, so are the $\r^\ell(i)$ by Lemma \ref{natmateslem}. Using the definition of the lifted functor $\hhoml(i,-)$ in terms of the $\r^\ell(i)$ and an easy diagram chase, these assemble into a lifted bifunctor \[\hhoml(-,-)\colon \Cpalg^\op \times \Ralg \to \Falg.\] 

It remains only to show that the characterizing natural transformation of this functor is a parameterized mate of the characterizing natural transformation of the original $\Cpalg \times \Calg \to \Lalg$. By definition $\l \colon Q' \otimes Q \To E\hast $ and $\r^\ell \colon Q\hhoml \To \homl(Q',E)$ are obtained by composing $\l(C'-)$ and $\r^\ell(C'-)$ with the comonad counit. Explicitly, $\l_{i,-}$ and $\r^\ell_{i,-}$ are the pasted composites displayed below in the double categories $\L${\bf Adj} and $\R${\bf Adj} respectively
\[\xymatrix@R=30pt@C=90pt{ \M^\2 \ar@<-1ex>[d]_(.3){i \hast -} \ar@{}[d]|{\dashv} \ar[r]^1 \ar@{}[dr]|(.3){\Searrow(\vec{e}_i)^*}|(.7){(\vec{\e}_i)_*\Swarrow} & \M^\2 \ar@<-1ex>[d]_(.3){C'i \hast -} \ar@{}[d]|{\dashv}  \ar[r]^Q \ar@{}[dr]|(.3){\Searrow \r^\ell(C'i)}|(.7){\l(C'i)\Swarrow} & \M \ar@<-1ex>[d]_(.3){Q'i \otimes -} \ar@{}[d]|{\dashv} \\ \N^\2 \ar[r]_1 \ar@<-1ex>[u]_(.3){\hhoml(i,-)} & \N^\2 \ar@<-1ex>[u]_(.3){\hhoml(C'i,-)} \ar[r]_E & \N \ar@<-1ex>[u]_(.3){\homl(Q'i,-)}}\] Hence, $\l_{i,-}$ and $\r^\ell_{i,-}$ are pointwise mates by Theorem \ref{dblmatesthm} and thus parameterized mates by Lemma \ref{paramateslem}.
\end{proof}

\section{The cellularity and uniqueness theorems}\label{cellsec}

With Theorem \ref{twovarcompthm}, we know how to recognize two-variable adjunctions of awfs should we happen to stumble upon one. In this section we will prove a powerful existence theorem that enables us to construct these structures explicitly provided the domain awfs are cofibrantly generated and the generators satisfy a simple cellularity condition. 

For context, we begin in \S\ref{oldcellssec} by reviewing previous results in this direction for ordinary adjunctions of awfs. In \S\ref{twovarcellssec}, we use the composition criterion to give first a mild and then a dramatic extension of previous results, proving the much-advertised cellularity theorem. We would like to conclude further that such extensions are unique; this ends up being surprisingly difficult.

With this aim in mind, in \S\ref{upssec} we further extend the universal property of the unit functor constructed via Garner's small object argument, proving that the previous universality among adjunctions of awfs still holds with the extended terminology of \S\ref{arisingssec}. The general structure of the proof parallels our original argument, though the technical details are somewhat more complicated. The desired uniqueness theorem is now an immediate corollary.

\subsection{Cellularity and adjunctions of awfs}\label{oldcellssec}

We lay the groundwork for our cellularity theorem by reviewing previous results in this direction:  Lemma \ref{laxmorlem} encodes a composition criterion that can be used to characterize the adjunctions of awfs whose domain is cofibrantly generated. The non-trivial direction of the following theorem was first suggested by Mike Shulman; his proof appears as I.6.17. Below, we give a streamlined argument, whose essential details are the same but whose presentation is more conceptual.

\begin{thm}[I.6.17]\label{cgadjthm} Suppose $\M$ has an awfs $(\C,\F)$ generated by $\J$ and $\K$ has an awfs $(\L,\R)$, not necessarily cofibrantly generated. An adjunction $T \colon \M \rightleftarrows \K \colon S$ is an adjunction of awfs if and only if there is a lift \begin{equation}\label{Lcell}\xymatrix{ \J \ar[d] \ar@{-->}[r] & \Lalg \ar[d] \\ \M^\2 \ar[r]^{T^\2} & \K^\2}\end{equation} in which case the adjunction of awfs $(T,S,\l,\r): (\C,\F) \to (\L,\R)$ is canonically determined.
\end{thm}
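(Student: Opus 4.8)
The plan is to deduce both directions from the composition criterion of Lemma~\ref{laxmorlem}, using cofibrant generation twice: to trade functors into $\Calg$ for functors out of $\J$, and to trade functors out of $\Falg$ for lifting functions against $\J$ under the isomorphism $\Falg \cong \J^\boxslash$ of Theorem~\ref{garnsthm}. The forward implication will be immediate: if $(T,S,\l,\r)\colon (\C,\F)\to(\L,\R)$ is an adjunction of awfs, then by Definition~\ref{defn:colaxmor} together with Lemma~\ref{laxmorlem} the colax morphism $(T,\l)$ underlies a lifted functor $\Calg \to \Lalg$ over $T^\2$; precomposing with the unit functor $\J \to \Calg$ of Theorem~\ref{garnsthm}, which lies over $\M^\2$, produces the lift~\eqref{Lcell}.

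For the converse, I would first convert the lift $\J\to\Lalg$ over $T^\2$ into a lifted functor $\Ralg \to \J^\boxslash \cong \Falg$ over $S^\2$. Given an $\R$-algebra $(g,t)$, a lifting problem of some $j\in\J$ against $Sg$ transposes under $T\dashv S$ to a lifting problem of $Tj$ against $g$; since $Tj$ inherits an $\L$-coalgebra structure and $(g,t)$ is an $\R$-algebra, the lift functor~\eqref{eq:liftfunctor} produces a canonical solution, and its transpose back across $T\dashv S$ is declared the chosen solution to the original problem. Naturality of the adjunction correspondence, functoriality of~\eqref{eq:liftfunctor}, and functoriality of the given lift make this assignment well defined and functorial in $\R$-algebra maps. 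By Lemma~\ref{laxmorlem} it underlies a lax morphism of awfs $(S,\r)\colon(\L,\R)\to(\C,\F)$ exactly when it preserves composition of algebras; and since composition in $\J^\boxslash$ is computed by the explicit formula~\eqref{boxcomp} while the correspondence $T\dashv S$ respects composition of arrows, that identity transposes to the assertion that $\mathrm{lift}\colon\Ralg\to\Lalg^\boxslash$ preserves composition of algebras---which is Lemma~\ref{liftcomplem}. The mate $\l$ of $\r$ with respect to $T^\2\dashv S^\2$ and $T\dashv S$ is then a colax morphism of awfs $(T,\l)$ by Lemma~\ref{mateslem} and the dual statement for comonads, applied to the monad and comonad halves of $\r$ separately, so that $(T,S,\l,\r)$ is an adjunction of awfs.

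Canonicity will follow once one observes that an adjunction of awfs is determined by $\r$ (since $\l$ is forced to be its mate), and that $\r$ is in turn equivalent, by Lemmas~\ref{appellem} and~\ref{laxmorlem}, to the lifted functor $\Ralg\to\Falg$; so it suffices to check that for any adjunction of awfs whose restriction along the unit functor is the given lift $\J\to\Lalg$, the compatibility between its chosen solutions to lifting problems against $\C$-coalgebras and those against $\L$-coalgebras forces the $\F$-algebra structure assigned to $Sg$, read as a lifting function against $\J$, to coincide with the one constructed above. I expect the composition-preservation step to be the main obstacle: no single ingredient is difficult, but one must carefully interleave formula~\eqref{boxcomp}, the interaction of the adjunction transpose with composition, and Lemma~\ref{liftcomplem}; a secondary point requiring care is verifying that the passage from an adjunction of awfs to a lift $\J\to\Lalg$ and its reverse are mutually inverse. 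The rest is routine bookkeeping with the calculus of mates (Theorem~\ref{dblmatesthm}).
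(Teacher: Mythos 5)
Your proposal is correct and follows essentially the same route as the paper: the forward direction composes the unit functor $\J\to\Calg$ with the lifted left adjoint, and the converse builds the lifted right adjoint $\Ralg\to\Falg\cong\J^\boxslash$ by transposing lifting problems across $T\dashv S$ and solving them via the $\L$-coalgebra structures on $T^\2\J$, checking composition-preservation via Lemma~\ref{liftcomplem} and naturality of the adjunction, then concluding with Lemma~\ref{laxmorlem}. The paper merely packages your construction as the composite $\Ralg\xrightarrow{\mathrm{lift}}\Lalg^\boxslash\xrightarrow{\mathrm{res}}(T^\2\J)^\boxslash\xrightarrow{\mathrm{adj}}\J^\boxslash$, verifying that each factor preserves composition.
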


 In other words, there is an adjunction of awfs $(\C,\F) \to (\L,\R)$ lifting $T\dashv S$ if an only if the image of the generators under $T^\2$ is \emph{cellular}. Write $T^\2\J$ for the category $\J$ over $\K^\2$. With this notation, the lifted functor of (\ref{Lcell}) is precisely a functor $T^\2\J \to \Lalg$ over $\K^\2$. 

\begin{proof}[Proof of Theorem \ref{cgadjthm}] In the presence of an adjunction of awfs $(T,S) \colon (\C,\F) \to (\L,\R)$, the functor \eqref{Lcell} is defined by composing the unit functor (I.2.26) with the lifted left adjoint. For the converse, a categorical expression of the familiar fact that adjunctions interact nicely with lifting problems is that \begin{equation}\label{adjpullback}\xymatrix{ (T^\2\J)^\boxslash \ar[d] \ar[r]^-{\mathrm{adj}} \ar@{}[dr]|(.2){\lrcorner} & \J^\boxslash \ar[d] \\ \K^\2 \ar[r]_{S^\2} & \M^\2}\end{equation} is a pullback in $\CAT$, or indeed in {\bf DblCAT}. The functor $\mathrm{adj}\colon (T^\2\J)^\boxslash \to \J^\boxslash$ sends an arrow $f$ with lifting function $\phi_f$ to the arrow $Sf$ with lifting function $\phi_f^\sharp$, whose chosen solutions are adjunct to the solutions chosen by $\phi_f$ to the transposed lifting problem. Define $\Ralg \to \Falg \cong \J^\boxslash$ to be the composite \[
\xymatrix{ \Ralg \ar[r]^-{\mathrm{lift}} & (\Lalg)^\boxslash \ar[r]^-{\mathrm{res}} & (T^\2\J)^\boxslash \ar[r]^-{\mathrm{adj}} & \J^\boxslash}\] where the restriction is along the functor $T^\2\J \to \Lalg$. Each functor preserves composition: the first by Lemma \ref{liftcomplem}, the second trivially, and the third by naturality of adjunctions---this last diagram chase is given in the proof of Theorem I.6.15. The conclusion follows from Lemma \ref{laxmorlem}. 
\end{proof}

The proof of Theorem \ref{cgadjthm} defines a canonical adjunction of awfs arising from a specified cellular structure for the generators $T^\2\J$. An immediate consequence of the following extension of the universal property of Theorem \ref{garnsthm} is that the cellular structure of $T^\2\J$ uniquely determines the adjunction of awfs.

\begin{thm}[I.6.22]\label{universalSOAthm} The unit functor \emph{(I.2.26)} constructed by Garner's small object argument is universal among adjunctions of awfs:  if $\J$ generates $(\C,\F)$ and $(\L,\R)$ is any awfs, a functor $\J \to \Lalg$ lifting a left adjoint factors through the unit along the lifted left adjoint of a unique adjunction of awfs $(\C,\F) \to (\L,\R)$.
\end{thm}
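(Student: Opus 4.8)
The plan is to split the statement into an existence claim --- that \emph{some} adjunction of awfs $(\C,\F)\to(\L,\R)$ over the given $T\dashv S$ realises the factorization through the unit $\eta\colon\J\to\Calg$ --- and a uniqueness claim, treating existence by invoking Theorem~\ref{cgadjthm} together with one compatibility check, and uniqueness by showing that such an adjunction of awfs is rigidly determined by its lifted right adjoint, which is itself pinned down by the prescribed factorization. Write $G\colon\J\to\Lalg$ for the given lift over $T^\2$, and for an adjunction of awfs $\Phi$ over $T\dashv S$ write $\bar T_\Phi\colon\Calg\to\Lalg$ and $\bar S_\Phi\colon\Ralg\to\Falg$ for its lifted adjoints.

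For existence: since a functor $\J\to\Lalg$ over $T^\2$ is exactly a functor $T^\2\J\to\Lalg$ over $\K^\2$, Theorem~\ref{cgadjthm} already produces an adjunction of awfs $\Phi\colon(\C,\F)\to(\L,\R)$, with $\bar S_\Phi$ the composite $\Ralg\xrightarrow{\mathrm{lift}}\Lalg^\boxslash\xrightarrow{\mathrm{res}}(T^\2\J)^\boxslash\xrightarrow{\mathrm{adj}}\J^\boxslash\cong\Falg$ and $\bar T_\Phi$ its partner under the mates correspondence of Definition~\ref{defn:adjawfs}. What I would then check is that $\bar T_\Phi\circ\eta=G$. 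Because $\Lalg\to\K^\2$ is faithful and both composites lie over $T^\2$, it is enough to show that for each $j\in\J$ the two $\L$-coalgebra structures $\bar T_\Phi(\eta j)$ and $G(j)$ on $Tj$ agree; and any $\L$-coalgebra structure on an arrow $g$ is recovered from the chosen lift it induces in the lifting problem $(Lg,1)\colon g\To Rg$ against the free $\R$-algebra $(Rg,\mu_g)$, by \eqref{eq:pointedstr}--\eqref{chosenlift}. Hence it suffices to prove that $\bar T_\Phi(\eta j)$ and $G(j)$ select the same chosen lift against an arbitrary $\R$-algebra $(f,t)$, which I would obtain by chaining two facts: first, that via the isomorphism $\Falg\cong\J^\boxslash$ of Theorem~\ref{garnsthm} (which restricts lifting functions along $\eta$) and the explicit form of $\bar S_\Phi$ above, the chosen lift of $\eta(j)$ against $\bar S_\Phi(f,t)$ is the transpose of the chosen lift of $G(j)$ against $(f,t)$; and second, that for \emph{any} adjunction of awfs the chosen lift of $\eta(j)$ against $\bar S_\Phi(f,t)$ is transpose to the chosen lift of $\bar T_\Phi(\eta j)$ against $(f,t)$ --- this being precisely the statement that the two lifted adjoints are adjoint as double functors, i.e.\ the compatibility behind the pullback square \eqref{adjpullback} in the proof of Theorem~\ref{cgadjthm}. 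Comparing and untransposing yields the equality, and equality on morphisms follows from faithfulness.

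For uniqueness: if $\Phi_1$ and $\Phi_2$ are adjunctions of awfs $(\C,\F)\to(\L,\R)$ over $T\dashv S$ with $\bar T_{\Phi_1}\eta=\bar T_{\Phi_2}\eta=G$, then by Definition~\ref{defn:adjawfs} each $\Phi_i$ is determined by its lifted right adjoint $\bar S_{\Phi_i}$, which by Lemma~\ref{laxmorlem} is a composition-preserving lifted functor $\Ralg\to\Falg$ and hence, under $\Falg\cong\J^\boxslash$, is determined by the chosen lifts it assigns against the generators; but by the second fact above those chosen lifts are transpose to the chosen lifts of $\bar T_{\Phi_i}(\eta j)=G(j)$ against the relevant $\R$-algebras, and so depend only on $G$. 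Therefore $\bar S_{\Phi_1}=\bar S_{\Phi_2}$, whence $\Phi_1=\Phi_2$.

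The main obstacle I anticipate is organisational rather than computational: formulating and citing the precise ``adjunction of awfs $\Rightarrow$ double-functorial, transpose-compatible chosen lifts'' statement used twice above, checking it against the double-categorical set-up of \S\ref{dblcatssec}, and keeping straight which (co)algebra structure each chosen lift is computed from; the one genuinely non-formal ingredient, easy to overlook, is that an $\L$-coalgebra structure is reconstructible from its chosen lift against its own free $\R$-algebra. The harder analogue --- where the left adjoint is replaced by a pushout-product functor $i\hast-$, which does not preserve composability of arrows and so forces the argument to be recast through the $\M^{\4}$-level adjunctions of \S\ref{arisingssec} --- is the extension carried out in \S\ref{upssec} and is not needed for the statement at hand.
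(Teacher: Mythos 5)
Your argument is correct in outline, but it takes a genuinely different route from the paper's. Theorem \ref{universalSOAthm} is quoted from I.6.22, and that proof (recapped here when it is extended in the proof of Theorem \ref{extendedUPthm}) establishes universality by showing that Garner's small object argument constructs a reflection along each forgetful functor in the chain $\AWFS_{\ladj}\to\LAWFS_{\ladj}\to\Cmd(-)^{\2}_{\ladj}\to\CAT/(-)^{\2}_{\ladj}$, using preservation of left Kan extensions and of the relevant pushouts by lifted left adjoints together with the monoidal structures $\oast,\odot$ on functorial factorizations; existence and uniqueness of the factorization then fall out of reflectivity. You instead obtain existence from Theorem \ref{cgadjthm} and prove uniqueness directly: an $\F$-algebra is determined by its lifting function against the generators because $\Falg\cong\J^\boxslash$ factors as ``lift'' followed by restriction along the unit functor, and those chosen lifts are adjunct to the chosen lifts against $G(j)$ by your second fact. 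That fact does hold for every adjunction of awfs---it follows from the mate relationship between $\l$ and $\r$ via the formulas (\ref{matesformula}) and naturality---but it is not literally the pullback square (\ref{adjpullback}), which lives at the level of categories of lifting functions, so it must be stated and proved on its own, as you anticipate; your other key observation, that a coalgebra structure is recovered as the canonical lift against its own free algebra by a monad triangle identity, correctly completes the check that the lifted left adjoint composed with the unit recovers the given functor, a check genuinely needed since Theorem \ref{cgadjthm} does not assert it. The route is non-circular, since the proof of Theorem \ref{cgadjthm} given in the paper nowhere invokes the universal property; note, though, that you are reversing the paper's logical order, which deduces the uniqueness of the canonical adjunction of awfs in Theorem \ref{cgadjthm} as a corollary of Theorem \ref{universalSOAthm}. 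What the reflection argument buys is exactly what \S\ref{upssec} needs: it transfers to the non-pointwise adjunctions $i\hast-\dashv\hhom(i,-)$, where composability of arrows is not preserved and your strategy would have to lean on the separate cellularity theorem \ref{cgadjawfsthm}; what yours buys is a shorter, more elementary proof in the pointwise case, using only the tools of \S\ref{compsec} and the isomorphism (I.2.27).
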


\subsection{The cellularity theorem}\label{twovarcellssec}

We can use the composition criterion of Theorems \ref{compcritthm} and \ref{twovarcompthm} to prove an analogous cellularity theorem for two-variable adjunctions of awfs. The full result, Theorem \ref{existthm} below, depends crucially on the single-variable case. 

\begin{thm}\label{cgadjawfsthm}  Suppose $\M$ has an awfs $(\C,\F)$ generated by $\J$ and $\K$ has an awfs $(\L,\R)$, not necessarily cofibrantly generated. Then $i \hast - \dashv \hhom(i,-)$ forms an adjunction of awfs $(\C,\F) \to (\L,\R)$  if and only if $i \hast \J$ is cellular, that is, if and only if there is a lift \[\xymatrix{ \J \ar@{-->}[r] \ar[d] & \Lalg \ar[d] \\ \M^\2 \ar[r]_{i\hast -} & \N^\2}\] 
\end{thm}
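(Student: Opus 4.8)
The plan is to run the same argument as in the proof of Theorem~\ref{cgadjthm}, but with the composition criterion of Theorem~\ref{compcritthm} taking over the role that Lemma~\ref{laxmorlem} played there. The substitution is forced: the functors $i\hast-$ and $\hhom(i,-)$ do not preserve composability of vertical arrows, so the double-categorical reformulation underlying Lemma~\ref{laxmorlem} is unavailable, and Theorem~\ref{compcritthm} is precisely the replacement adapted to adjunctions of this shape. The forward implication is immediate: if $i\hast-\dashv\hhom(i,-)$ underlies an adjunction of awfs $(\C,\F)\to(\L,\R)$, then composing the unit functor $\J\to\Calg$ of Theorem~\ref{garnsthm} with the lifted left adjoint $i\hast-\colon\Calg\to\Lalg$ produces a functor $\J\to\Lalg$ over $i\hast-\colon\M^\2\to\N^\2$, i.e.\ a witness that $i\hast\J$ is cellular.

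For the converse, following the convention that $i\hast\J$ denotes $\J$ regarded as a category over $\N^\2$ along $i\hast-$, the hypothesized lift is a functor $i\hast\J\to\Lalg$ over $\N^\2$. The first step is to record the pullback square
\[\xymatrix{ (i\hast\J)^\boxslash \ar[d] \ar[r]^-{\mathrm{adj}} \ar@{}[dr]|(.2){\lrcorner} & \J^\boxslash \ar[d] \\ \N^\2 \ar[r]_-{\hhom(i,-)} & \M^\2}\]
in $\CAT$, indeed in ${\bf DblCAT}$: an object of $(i\hast\J)^\boxslash$ is an $f\in\N^\2$ together with compatible chosen solutions to all lifting problems against the maps $i\hast j$, $j\in\J$, and under $i\hast-\dashv\hhom(i,-)$ such data is exactly a lifting function exhibiting $\hhom(i,f)$ as an object of $\J^\boxslash$; the functor $\mathrm{adj}$ sends $f$ with its lifting function to $\hhom(i,f)$ with the transposed lifting function. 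Using this, one defines a lifted functor $\Ralg\to\Falg$ over $\hhom(i,-)$ as the composite
\[\xymatrix{ \Ralg \ar[r]^-{\mathrm{lift}} & \Lalg^\boxslash \ar[r]^-{\mathrm{res}} & (i\hast\J)^\boxslash \ar[r]^-{\mathrm{adj}} & \J^\boxslash \cong \Falg,}\]
where $\mathrm{lift}$ is the functor \eqref{eq:liftfunctor} for $(\L,\R)$, $\mathrm{res}$ is restriction along the given functor $i\hast\J\to\Lalg$, and the final isomorphism is Theorem~\ref{garnsthm}. By the dual of Lemma~\ref{appellem} this functor is classified by a natural transformation $\r(i)$ of the form considered in \S\ref{arisingssec}.

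It then remains to check that this lifted functor satisfies the composition criterion of Theorem~\ref{compcritthm}; once this is known, Theorem~\ref{compcritthm} promotes it to a lax morphism of awfs $\hhom(i,-)\colon(\L,\R)\to(\C,\F)$, and Lemma~\ref{adjawfsredefn} upgrades that to the required adjunction of awfs $(\C,\F)\to(\L,\R)$ lifting $i\hast-\dashv\hhom(i,-)$. For the verification, fix composable $\R$-algebras $(f,s)$ and $(g,t)$. Lemma~\ref{liftcomplem} identifies the lifting function that $\mathrm{lift}$ assigns to $(gf,t\bullet s)$ with the box-composite $\phi_{(g,t)}\bullet\phi_{(f,s)}$ of \eqref{boxcomp}, and $\mathrm{res}$, being the restriction of a double-categorical structure, transports this formula verbatim. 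Applying $\mathrm{adj}$ and invoking naturality of the adjunction $i\hast-\dashv\hhom(i,-)$ together with the explicit construction of the pullback-hom, the box-composition recipe ``solve the lifting problem against $g$ first, then feed that solution into the problem against $f$'' is transported to precisely the nested prescription displayed in \eqref{crazycomp} for $\hhom(i,gf)$. Hence the composition criterion holds, and the proof is complete.

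The step I expect to be the main obstacle is exactly this last translation: confirming that the box-composition law \eqref{boxcomp} for chosen lifts, pushed across the adjunction $i\hast-\dashv\hhom(i,-)$ — whose constituent functors do \emph{not} preserve composability of arrows — reproduces the exact nesting of \eqref{crazycomp}. In Theorem~\ref{cgadjthm} the analogous point is painless because $S^\2$ preserves composability and one concludes directly from Lemma~\ref{laxmorlem}; here one must instead carry out a diagram chase that keeps careful track of the pullback cones appearing in the pullback-hom, and (implicitly) of the factorization of $\r(i)$ through $\r'(i)$ from \S\ref{arisingssec}. This is the bookkeeping-heavy part. Once it is in place, the single-variable statement just proved, fed through Theorem~\ref{twovarcompthm} and the calculus of parameterized mates, is what will yield the full cellularity theorem.
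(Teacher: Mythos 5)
Your proposal is correct and follows essentially the same route as the paper's own proof: defining the lift of $\hhom(i,-)$ as the composite $\Ralg \to \Lalg^\boxslash \to (i\hast\J)^\boxslash \to \J^\boxslash \cong \Falg$, using Lemma \ref{liftcomplem} for preservation of composition, and verifying the criterion of Theorem \ref{compcritthm} by transposing the box-composition formula \eqref{boxcomp} across $i\hast-\dashv\hhom(i,-)$ to recover exactly the nesting of \eqref{crazycomp}. The final ``translation'' step you flag as the main obstacle is in fact immediate in the paper: writing out $\phi_g^\sharp\bullet\phi_f^\sharp$ explicitly and transposing termwise gives the required formula directly, with no further diagram chase needed.
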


\begin{proof} As in the proof of Theorem \ref{cgadjthm}, we define the lift $\Ralg \to \Falg \cong \J^\boxslash$ of $\hhom(i,-)$ to be the composite  \[\xymatrix{ \Ralg \ar[r]^-{\mathrm{lift}} & \Lalg^\boxslash \ar[r]^-{\mathrm{res}} & (i \hast \J)^\boxslash \ar[r]^-{\mathrm{adj}} & \J^\boxslash}\] Explicitly, the image of an $\R$-algebra $f$ in $\J^\boxslash$ is the arrow $\hhom(i,f)$ equipped with a lifting function defined so that  the chosen solution $\phi_{\hhom(i,f)}(j, a, d \times c)$ to a lifting problem of the form displayed in the left-hand square of (\ref{crazycomp}) is adjunct to the solution constructed via the awfs $(\L,\R)$ and the functor $\J \to \Lalg$. 

By Lemma \ref{liftcomplem}, the functor $\Ralg \to (i \hast \J)^\boxslash$ preserves composition, so it suffices to show that adj$\colon (i \hast \J)^\boxslash \to \J^\boxslash$ satisfies the obvious analog of the criterion of Theorem \ref{compcritthm}. Given composable $(f, \phi_f^\sharp), (g,\phi_g^\sharp) \in (i \hast \J)^\boxslash$ their composite is $(gf, \phi_g^\sharp \bullet \phi_f^\sharp)$ where \[\phi_g^\sharp \bullet \phi_f^\sharp(i \hast j, c^\sharp \sqcup a^\sharp, b^\sharp) := \phi_f^\sharp(i \hast j, c^\sharp \sqcup a^\sharp, \phi_g^\sharp(i \hast j, fc^\sharp \sqcup fa^\sharp, b^\sharp))\] Transposing across the adjunction, we get the formula \[ \phi_{\hhom(i,gf)} (j, a, b \times c) := \phi_{\hhom(i,f)}(j, a, \phi_{\hhom(i,g)}(j, f^B a, b \times f^A  c) \times c)\] which says that the $\F$-algebra structure for $\hhom(i,gf)$ is obtained precisely as described in the statement of Theorem \ref{compcritthm}. Indeed, this is how that condition was discovered.
\end{proof}

We now extend this result to give a characterization of  two-variable adjunctions of awfs $(\C',\F') \times (\C,\F) \to (\L,\R)$ whose domain awfs are cofibrantly generated. The full classification is completed by the uniqueness theorem, proven below.

\begin{thm}[Cellularity Theorem]\label{existthm} Suppose $\I$ generates $(\C',\F')$ on $\K$ and $\J$ generates $(\C, \F)$ on $\M$ and $\N$ has an awfs $(\L,\R)$. Then $(\otimes,\homl,\homr)\colon \K \times \M \to \N$ forms a two-variable adjunction of awfs if and only if $\I \hast \J$ is cellular, that is, if and only if  there is a lift \[\xymatrix{ \I \times \J \ar[d] \ar@{-->}[r] & \Lalg \ar[d] \\ \K^\2 \times \M^\2 \ar[r]^-{-\hast-} & \N^\2}\]
\end{thm}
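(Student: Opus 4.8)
The plan is to bootstrap from the single-variable cellularity theorem (Theorem \ref{cgadjawfsthm}), applied once in each of the two domain variables, and then to invoke the composition criterion (Theorem \ref{twovarcompthm}) together with the parameterized mates calculus of \S\ref{matessec}. For the ``only if'' direction I would simply precompose the $\otimes$-lift $\Cpalg \times \Calg \to \Lalg$ of a given two-variable adjunction of awfs with the unit functors $\I \to \Cpalg$ and $\J \to \Calg$ supplied by Garner's small object argument (Theorem \ref{garnsthm}); the result is a lift $\I \times \J \to \Lalg$ over $\N^\2$ along $-\hast-$, so $\I \hast \J$ is cellular.

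For the converse, suppose given a lift $\ell \colon \I \times \J \to \Lalg$ over $\N^\2$. First I would fix $j \in \J$: restricting $\ell$ along $\I \times \{j\}$ gives a functor $\I \to \Lalg$ over $\N^\2$ along the left adjoint $- \hast j \colon \K^\2 \to \N^\2$, so Theorem \ref{cgadjawfsthm}, applied in the first variable (the evident mirror of the version stated there), produces an adjunction of awfs $(- \hast j, \hhomr(j,-)) \colon (\C',\F') \to (\L,\R)$. Since that construction is canonical, these are natural in $j \in \J$ and assemble into a functor $\Cpalg \times \J \to \Lalg$ over $\N^\2$ whose restriction along the unit $\I \to \Cpalg$ is $\ell$; in particular each $\Cpalg \hast j$, for $j \in \J$, is cellular. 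Now fixing $i \in \Cpalg$, the resulting functor $\J \to \Lalg$ over $\N^\2$ along $i \hast -$ lets me apply Theorem \ref{cgadjawfsthm} again, this time in the second variable, to obtain an adjunction of awfs $(i \hast -, \hhoml(i,-)) \colon (\C,\F) \to (\L,\R)$; naturality in $i \in \Cpalg$ --- again a consequence of canonicity --- assembles the lax halves into a single lifted functor $\hhoml \colon \Cpalg^\op \times \Ralg \to \Falg$ over $\M^\2$.

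By Theorem \ref{twovarcompthm} it then suffices to check that this lifted functor satisfies the criterion of Theorem \ref{compcritthm}, in the appropriate form, in each of its two variables; granting that, $\hhoml$ is one of the three lifted functors constituting a two-variable adjunction of awfs and the remaining two (of $\otimes$ and $\homr$) are its parameterized mates. In the $\Ralg$-variable this is automatic: for each $i \in \Cpalg$ the functor $\hhoml(i,-) \colon \Ralg \to \Falg$ was produced by Theorem \ref{cgadjawfsthm} and hence satisfies the criterion by construction. The $\Cpalg$-variable is where the real work lies. Here I would unwind the explicit description coming from the proof of Theorem \ref{cgadjawfsthm}: the lifting function of $\hhoml(i,f)$ against $j \in \J$ is adjunct to the canonical $(\L,\R)$-lift determined by the $\L$-coalgebra structure on $i \hast j$ and the $\R$-algebra structure on $f$. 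Because those $\L$-coalgebra structures compose correctly in the variable $i$ --- which is exactly the assertion that $- \hast j \colon (\C',\F') \to (\L,\R)$ is a colax morphism of awfs, established in the first step --- and because $\mathrm{lift} \colon \Ralg \to \Lalg^\boxslash$ preserves composition of algebras (Lemma \ref{liftcomplem}), the lifting functions of $\hhoml(-,f)$ should compose as the criterion demands.

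The hard part will be precisely this last verification: transporting the pushout-product ``calculus of composition'' through the adjunction $i \hast - \dashv \hhoml(i,-)$ and matching it against the composite lifting functions of $\hhoml$. It is the two-variable analog of the diagram chase carried out inside the proof of Theorem \ref{compcritthm}, and it is the one place where the coherence relating the two coalgebra variables --- the hexagon condition hidden in a bicolax morphism of awfs --- cannot be read off formally from the single-variable results but must be computed by hand. Once it is in place, Theorem \ref{twovarcompthm} delivers the two-variable adjunction of awfs, completing the proof.
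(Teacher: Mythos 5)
Your outline reproduces the paper's own strategy, up to the harmless choice of fixing $j \in \J$ first rather than $i \in \I$: apply Theorem \ref{cgadjawfsthm} once in each variable, assemble the resulting single-variable lifts into bifunctors, and finish with Theorem \ref{twovarcompthm} after checking the composition criterion in the remaining coalgebra variable. However, two steps are asserted rather than proved, and the second is the heart of the theorem. First, the assembly of the lifts into bifunctors is not a formal consequence of ``canonicity'': one must check that a morphism in the frozen parameter variable induces a morphism of algebras, i.e., that the chosen lifting functions are natural in that parameter. The paper proves this with the rectangle (\ref{natsoln}), using functoriality of the given lift $\I \times \J \to \Lalg$ in the parameter together with the fact that maps of $\L$-coalgebras preserve the chosen solutions to lifting problems against $\R$-algebras; only after that does Lemma \ref{natmateslem} assemble the mates in the other direction. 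This is routine, but it is an argument, not an appeal to canonicity, and it is needed at both assembly stages.

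More seriously, the verification you defer---that the assembled $\hhoml(-,f)$ satisfies the dual composition criterion in the $\Cpalg$-variable---is precisely where the paper does its substantive work, and your proposed justification (``the coalgebra structures compose correctly because $-\hast j$ is a colax morphism of awfs, plus Lemma \ref{liftcomplem}'') cannot by itself deliver it. Since $-\hast j$ does not send a composite of coalgebras to the composite of their images (the pushout-product of a composite is not the composite of the pushout-products), the colax condition is expressed by the pentagons of Lemma \ref{adjawfsredefn} for the characterizing transformations, not by literal preservation of vertical composition; and Lemma \ref{liftcomplem} concerns composition in the algebra variable, which you have already (correctly) observed is the automatic direction. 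What is actually required, and what the paper supplies, is a direct computation: transpose the two lifting problems (the mirror image of (\ref{crazyopcomp})) across the adjunction, factor the resulting map into the $\R$-algebra $f$ through the primed transformation of (\ref{gammarhodefn}), and verify by a chain of identities---naturality of the characterizing transformation and of $\mu$, associativity of the algebra structure on $f$, the colax pentagon, and the formula (\ref{coalgcomp}) for the composite coalgebra structure---that the two chosen solutions coincide. Your sketch names roughly the right ingredients, but until this calculation (or an equivalent one) is carried out, the proof is incomplete at its crux.
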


\begin{proof}
By Theorem \ref{cgadjawfsthm}, for each fixed $i \in \I$, the functor $i \hast - \colon \J \to \Lalg$ determines an adjunction of awfs $(i \hast -, \hhoml(i,-)) \colon (\C,\F) \to (\L,\R)$. A morphism $(a,b) \colon i' \To i$ in $\I$ induces a natural transformation $\hhoml(i,-) \To \hhoml(i',-)$ on the arrow categories. The lifts $\hhoml(i,-) \colon \Ralg \to \Falg$ assemble into a functor \[ \hhoml(-,-) \colon \I^\op \times \Ralg \to \Falg \] if and only if each component $\hhoml(i,f) \To \hhoml(i',f) \in \M^\2$ lifts to a morphism in $\Falg \cong \J^\boxslash$. If this is the case, it follows that the natural transformations $\r^\ell(i)$ characterizing each lifted functor $\hhoml(i,-)$ are natural in $\I$. By Lemma \ref{natmateslem}, their mates are then also natural in $\I$, and so the lifts of the left adjoints will assemble into a functor $\I \times \Calg \to \Lalg$, as in the proof of Theorem \ref{twovarcompthm}.

In other words, we must show that each lifted functor $\hhoml(i,-)$ assigns, to each $\R$-algebra $f$, solutions to all lifting problems between $j \in \J$ and $\hhoml(i,f)$ that are natural with respect to morphisms in $\J$ (so that this defines an object of $\J^\boxslash$), $\Ralg$ (so that this defines a functor), and $\I$ (so that the functors assemble into a bifunctor). The construction of Theorem \ref{cgadjawfsthm}, which solves the adjunct lifting problem using the functor $\I \times \J \to \Lalg$ and the awfs $(\L,\R)$, has all of these properties. 

To see this, note that the top composite below specifies the chosen solution to any lifting problem; in other words, this defines $\hhoml(i,f)$ as an element of $\J^\boxslash$. \begin{equation}\label{natsoln}\xymatrix@C=15pt{ \N^\2(j, \hhoml(i,f)) \ar[r]^-{\cong} \ar[d]|{\hhoml((a,b), f)_*} & \N^\2(i \hast j, f) \ar[d]^{ ((a,b) \hast j)^*} \ar[r]^-{\mathrm{solve}} & \N(B \otimes L, X) \ar[d]^{ (b \otimes L)^*} \ar[r]^-{\cong} & \M(L, \homl(B,X)) \ar[d]^{\homl(b,X)_*} \\ \N^\2(j, \hhoml(i', f)) \ar[r]_-{\cong} & \N^\2( i' \hast j, f) \ar[r]_-{\mathrm{solve}} & \N(B' \otimes L, X) \ar[r]_-{\cong} & \M(L, \homl(B',X))}\end{equation} Given $(a,b) \colon i' \To i$ in $\I$, the left square and right squares commute by naturality of the parameterized adjunctions in $\K^\2$ and $\K$. The essential point is that the middle square, whose horizontal arrows use the awfs $(\L,\R)$ to solve the lifting problem, also commutes, by functoriality of $\I \times \J \to \Lalg$ in the first variable and the fact that morphisms of $\L$-coalgebras preserve the chosen solutions to lifting problems against $\R$-algebras.

The left bottom composite of the rectangle chooses solutions to lifting problems against $\hhoml(i', f)$ that factor through lifting problems against $\hhoml(i,f)$. Commutativity of (\ref{natsoln}) asserts that these are the same lifts obtained by solving the lifting problem against $\hhoml(i,f)$ and then composing. This says exactly that the arrow in $\M^\2$ induced from $(a,b) \colon i' \To i$ lifts to $\J^\boxslash$, as desired.

We now use the lifted functor \begin{equation}\label{lastlift} \I \times \Calg \to \Lalg\end{equation} and repeat the argument just given. For each fixed $j \in \Calg$, the functor  $-\hast j \colon \I  \to \Lalg$ determines an adjunction of awfs \[(-\hast j, \hhomr(j,-)) \colon (\C',\F') \to (\L,\R)\] that depends also on the $\C$-coalgebra structure for $j$.  As above, the characterizing natural transformations are also natural in $\Calg$ and so the lifts $-\hotimes j \colon \Cpalg \to \Lalg$, $\hhomr(j,-)\colon \Ralg \to \Fpalg$ assemble into functors 
\begin{equation}\label{twothirds} -\hast - \colon \Cpalg \times \Calg \to \Lalg \end{equation}\[\hhomr(-,-) \colon \Calg^\op \times \Ralg \to \Fpalg.\]
Furthermore, their characterizing natural transformations are parameterized mates by the second half of the proof of Theorem \ref{twovarcompthm}.

The last step is subtle. We use the dual of the composition criterion of Theorem \ref{compcritthm} to show that for each $f \in \Ralg$, the lift \[\hhomr(-,f)\colon \Calg^\op \to \Fpalg\] obtained by restricting the second functor of (\ref{twothirds}) is a lax morphism of awfs. It follows from Theorem \ref{twovarcompthm} that the other parameterized mate of the natural transformations characterizing the functors (\ref{twothirds}) defines the final lifted functor \[\hhoml(-,-) \colon \Cpalg^\op \times \Ralg \to \Falg,\] completing the desired two-variable adjunction of awfs. 

In order to apply Theorem \ref{compcritthm}, we must show that given $j \colon I \to J$, $k \colon J \to K \in \Calg$, the unlabeled solutions that the functor $\hhomr(-,f)\colon \Cpalg^\op \to \Fpalg \cong \I^\boxslash$ assigns to the lifting problems below agree; for aesthetic reasons, we have abbreviated $\homr$ using exponential notation.
\begin{equation}\label{crazyopcomp} \xymatrix@R=40pt@C=40pt{ A \ar[r]^a \ar[d]_{\I \ni i} & X^K \ar[d]|(.3){\hhomr(k,f)} \ar@{=}[r] & X^K \ar[d]|(.6){\hhomr(kj,f)} \ar[r]^{X^k} & X^J \ar[d]|{\hhomr(j,f)} \\ B \ar@{-->}[ur] \ar@{-->}[urr] \ar@{-->}[urrr]_(.8)d \ar@{-->}[r]_(.4){b \times d} \ar@/_2pc/[rr]_{b \times c} &  Y^K \displaystyle\pb_{Y^J} X^J \ar[r]_{1 \times_{Y^j} X^j} & Y^K \displaystyle\pb_{Y^I} X^I \ar[r]_{Y^k \times 1} & Y^J \displaystyle\pb_{Y^I} X^I} \end{equation} 

The chosen lifts are defined by solving the adjunct lifting problems using the awfs $(\L,\R)$ and (\ref{lastlift}); transposing across the adjunction, it suffices to show that the unlabeled chosen solutions in the diagram 
\[\xymatrix@C=33pt@R=30pt{ A \otimes J \displaystyle\po_{A \otimes I} B \otimes I \ar[d]_{i \hast j} \ar[r]^-{A \otimes k \sqcup 1} & A \otimes K \displaystyle\po_{A \otimes I} B \otimes I \ar[d]_(.4){i \hast (kj)} \ar[r]^-{ 1 \sqcup_{A \otimes j} B \otimes j} \ar@/^2pc/[rr]^{a^\sharp \sqcup c^\sharp} & A \otimes K \displaystyle\po_{A \otimes J} B \otimes J \ar[d]_(.7){i \hast k} \ar[r]^(.65){a^\sharp \sqcup d^\sharp} & X \ar[d]^f \\ B \otimes J \ar[r]_{B \otimes k} \ar@{-->}[urrr]^(.2)d & B \otimes K \ar@{=}[r] \ar@{-->}[urr] & B \otimes K \ar[r]_{b^\sharp} \ar@{-->}[ur] & Y}\] agree.

If $j$ and $k$ have $\C$-coalgebra structures $s$ and $t$ and $f$ has $\R$-algebra structure $r$, the left-most unlabeled solution is defined to be \begin{equation}\label{leftthingone}\xymatrix@C=30pt{ B \otimes K \ar[r]^-{B \otimes (t \bullet s)} & B \otimes Q(kj) \ar[r]^-{\l(i)_{kj}} & E(i \hast (kj)) \ar[r]^-{E (a^\sharp \sqcup c^\sharp,b^\sharp)} & Ef \ar[r]^r & X}\end{equation} while the right-most is defined to be \begin{equation}\label{rightthingtwo}\xymatrix@C=30pt{ B \otimes K \ar[r]^-{B \otimes t} & B \otimes Qk \ar[r]^-{\l(i)_k} & E(i \hast k) \ar[r]^-{E(a^\sharp \sqcup d^\sharp, b^\sharp)} & Ef \ar[r]^r & X}\end{equation} where $d$, by naturality of $\l(i)$ with respect to the morphism $(1,k) \colon j \To kj$ of $\M^\2$, is
\[\xymatrix@C=25pt{ B \otimes J \ar[r]^-{B \otimes s} & B \otimes Qj \ar[r]^-{B \otimes Q(1,k)} & B \otimes Q(kj) \ar[r]^-{\l(i)_{kj}} & E (i \hast (kj)) \ar[r]^-{E(a^\sharp \sqcup c^\sharp, b^\sharp)} & Ef \ar[r]^r & X} \]  
We use this factorization of $d$ to factor the morphism $(a^\sharp \sqcup d^\sharp, b^\sharp) \colon i \hast k \To f$ of $\N^\2$ as \[\xymatrix@C=18pt{ A \otimes K \displaystyle\po_{A \otimes J} B \otimes J \ar[d]_{i \hast k} \ar[rr]^-*+{\labelstyle 1 \sqcup B \otimes [Q(1,k)s]} & & A \otimes K \displaystyle\po_{A \otimes Q(kj)} B \otimes Q(kj) \ar[d]_-{i \hast F(kj)} \ar[r]^-{\l'(i)_{kj}} & E(i \hast (kj)) \ar[d]_{R(i \hast (kj))} \ar[r]^-*+{\labelstyle E(a^\sharp \sqcup c^\sharp, b^\sharp)} & Ef \ar[d]_{Rf} \ar[r]^r & X \ar[d]_f \\ B \otimes K \ar@{=}[rr]& & B \otimes K \ar@{=}[r] & B \otimes K \ar[r]_{b^\sharp} & Y \ar@{=}[r] & Y}\] Applying the functor $E$ and substituting this factorization for $E(a^\sharp \sqcup d^\sharp, b^\sharp)$ in (\ref{rightthingtwo}),
\begin{align*} r\cdot &E(r,1) \cdot E(E(a^\sharp \sqcup c^\sharp, b^\sharp), b^\sharp) \cdot E(\l'(i)_{kj},1) \cdot E(1 \sqcup B \otimes Q(1,k)s,1) \cdot  \l(i)_k \cdot B \otimes t  \\ &= r \cdot \mu_f  \cdot  E(E(a^\sharp \sqcup c^\sharp, b^\sharp), b^\sharp) \cdot E(\l'(i)_{kj},1)  \cdot \l(i)_{F(kj)} \cdot B \otimes Q(Q(1,k)s,1) \cdot B \otimes t \\ &= r \cdot E(a^\sharp \sqcup c^\sharp, b^\sharp)  \cdot \mu_{i \hast (kj)} \cdot E(\l'(i)_{kj},1)  \cdot \l(i)_{F(kj)} \cdot B \otimes Q(Q(1,k)s,1)  \cdot B \otimes t \\ &= r \cdot E(a^\sharp \sqcup c^\sharp, b^\sharp) \cdot \l(i)_{kj} \cdot B \otimes \mu_{kj} \cdot B \otimes Q(Q(1,k)s,1)  \cdot B \otimes t \\ &= r \cdot E(a^\sharp \sqcup c^\sharp, b^\sharp) \cdot \l(i)_{kj} \cdot B \otimes (t\bullet s) \end{align*}
by naturality of $\l(i)$ and associativity of $r$, naturality of $\mu$, the monad pentagon for $\l(i)_{kj}$ which holds because $\l(i)$ defines a colax morphism of awfs, and the definition of $t \bullet s$. This last line is (\ref{leftthingone}), completing the proof.
\end{proof}

\subsection{Extending the universal property of the small object argument}\label{upssec}

In the remainder of this section, whenever we refer to an adjunction between arrow categories we always mean an adjunction of the form $T^\2 \dashv S^\2$ defined pointwise by an adjunction between the underlying categories, an adjunction of the form $i \hast - \dashv \hhom(i,-)$ defined by fixing one of the variables in a two-variable adjunction of the form \eqref{twovaradj}, or a composite of the two. We extend Theorem \ref{universalSOAthm} to the adjunctions of awfs of Definition \ref{newadjawfsdefn}. The uniqueness theorem, Theorem \ref{uniquethm}, is a corollary of this result. 

\begin{thm}\label{extendedUPthm} The unit functor \emph{(I.2.26)} constructed by Garner's small object argument is universal among adjunctions of awfs.
\end{thm}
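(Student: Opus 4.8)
The plan is to exhibit the construction of Theorem~\ref{cgadjawfsthm} as a two-sided inverse to the operation sending an adjunction of awfs to the restriction of its lifted left adjoint along the unit functor, transcribing the proof of Theorem~\ref{cgadjthm} into the setting of \S\ref{arisingssec}. The adjunctions between arrow categories now under consideration are of the form $T^\2\dashv S^\2$, of the form $i\hast-\dashv\hhom(i,-)$, or a composite of the two; the first case is Theorem~\ref{universalSOAthm}, and a composite reduces to its factors using that a composite of adjunctions of awfs is again one (cf.\ Lemma~\ref{complem}) together with the evident extension of Lemma~\ref{adjawfsredefn}, so I concentrate on $i\hast-\dashv\hhom(i,-)$, the case that genuinely requires the machinery of \S\ref{arisingssec}. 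Fixing $i\in\K^\2$, a generating category $\J$ for $(\C,\F)$ on $\M$, and an arbitrary awfs $(\L,\R)$ on $\N$, the assertion to prove is that the assignment sending an adjunction of awfs $(\C,\F)\to(\L,\R)$ over $i\hast-\dashv\hhom(i,-)$ to the restriction along the unit functor $\J\to\Calg$ of its lifted left adjoint $\Calg\to\Lalg$ is a bijection onto the set of functors $F\colon\J\to\Lalg$ over $i\hast-\colon\M^\2\to\N^\2$.

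First, the construction of Theorem~\ref{cgadjawfsthm} is a section: given such an $F$, that theorem produces an adjunction of awfs whose lifted right adjoint is the composite $\Ralg\to\Lalg^\boxslash\to(i\hast\J)^\boxslash\to\J^\boxslash\cong\Falg$ of the lift functor \eqref{eq:liftfunctor}, restriction along $F$, and the transpose functor $\mathrm{adj}$ of $i\hast-\dashv\hhom(i,-)$. I would check that the mate of this lift---the lifted left adjoint $\Calg\to\Lalg$---restricts along the unit functor to $F$ itself, by unwinding definitions: the unit $\C$-coalgebra on a generator $j$ is the universal one (Theorem~\ref{garnsthm}(I.2.26)), the lifted adjoints are mates, and the lifting function assigned by $\mathrm{adj}$ to the arrow $\hhom(i,f)$, evaluated at $j$, is by construction adjunct to the canonical solution \eqref{chosenlift} determined by the $\L$-coalgebra $F(j)$ and the $\R$-algebra $f$.

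Second, the construction is a retraction: let $(i\hast-,\hhom(i,-),\l,\r)\colon(\C,\F)\to(\L,\R)$ be any adjunction of awfs whose lifted left adjoint restricts to $F$ along the unit functor. By Lemma~\ref{adjawfsredefn} it is determined by its lifted right adjoint, namely the lift $G\colon\Ralg\to\Falg$ of the lax morphism $(\hhom(i,-),\r)$; identify $\Falg\cong\J^\boxslash$ over $\M^\2$ by Theorem~\ref{garnsthm}(I.2.27), whose isomorphism factors through $\mathrm{lift}$ and restriction along the unit functor. It suffices to show that $G$ equals the composite of the previous paragraph, hence depends only on $F$. By Theorem~\ref{compcritthm}, $G$ satisfies the composition criterion, so transposing lifting functions across $i\hast-\dashv\hhom(i,-)$ turns $G$ into a lifted functor $\widetilde G\colon\Ralg\to(i\hast\J)^\boxslash$ over $\N^\2$ that respects the twisted composition law on $(i\hast\J)^\boxslash$ spelled out in the proof of Theorem~\ref{cgadjawfsthm}. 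Unwinding $\Falg\cong\J^\boxslash$, the lifting function $G$ assigns to $\hhom(i,f)$ solves a lifting problem against a generator $j$ by \eqref{chosenlift}, using the unit $\C$-coalgebra on $j$ and the $\F$-algebra structure $\r$ puts on $\hhom(i,f)$; transposing this, and invoking that $\l$ and $\r$ are mates and that by hypothesis $\Calg\to\Lalg$ carries the unit coalgebra of $j$ to $F(j)$, the transposed solution is adjunct to the canonical solution \eqref{chosenlift} in $(\L,\R)$ built from $F(j)$ and $(f,r)$. Hence $\widetilde G$ is $\mathrm{lift}$ followed by restriction along $F$, so $G=\mathrm{adj}\circ\widetilde G$ is the asserted composite, establishing the bijection. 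The uniqueness theorem, Theorem~\ref{uniquethm}, then follows by applying this bijection one variable at a time, as in the proof of Theorem~\ref{existthm}.

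The step I expect to be the main obstacle is exactly the one \S\ref{arisingssec} was built to support. Because $i\hast-$ does not preserve composability of arrows, one cannot assert, as in the pullback square \eqref{adjpullback} used to prove Theorem~\ref{cgadjthm}, that $(i\hast\J)^\boxslash$ is the pullback of $\J^\boxslash$ along $\hhom(i,-)$ in {\bf DblCAT}; it is only the pullback of the underlying categories of vertical arrows, and the transpose isomorphism carries the ordinary vertical composition on $\J^\boxslash$ to the twisted composition on $(i\hast\J)^\boxslash$. Checking that the transpose of lifting functions intertwines these two composition laws---and, correspondingly, that transposing the mate relation between $\l$ and $\r$ behaves as claimed on the composable triples of arrows that actually occur---requires the auxiliary natural transformations $\l'(i)$, $\r'(i)$ and the adjunction on $\M^\4$ and $\N^\4$ introduced in \S\ref{arisingssec}, and is the analog, now considerably more delicate, of the naturality-of-adjunctions diagram chase cited as ``the proof of Theorem I.6.15'' in the proof of Theorem~\ref{cgadjthm}.
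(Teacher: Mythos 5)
Your route is genuinely different from the paper's. The paper does not deduce the universal property from Theorem~\ref{cgadjawfsthm}; it re-runs Garner's construction, showing that the small object argument still yields a reflection along each forgetful functor in \eqref{semantics} after the morphisms are enlarged, the substantive work being the construction of the $\oast$ and $\odot$ combinations of colax morphisms of functorial factorizations lifting $i\hast-$ (which preserve neither domains nor composability) and the naturality of the distributive law. Your plan---existence from Theorem~\ref{cgadjawfsthm}, uniqueness by showing the lifted right adjoint's induced lifting functions on generators are forced---is sound for morphisms lying over a single adjunction $i\hast-\dashv\hhom(i,-)$, and there is no circularity, since Theorem~\ref{cgadjawfsthm} does not depend on Theorem~\ref{extendedUPthm}. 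The compatibility you invoke (the chosen lift built in $\M$ from the unit coalgebra on $j$ and the algebra structure $r^B\cdot\r(i)_f$ on $\hhom(i,f)$ is adjunct to the chosen lift built in $\N$ from the image coalgebra on $i\hast j$ and $r$) is correct and needs only the mates formula \eqref{matesformula}, which gives $\l(i)_j$ as the transpose of $\r(i)_{i\hast j}$ precomposed with $B\otimes Q(\eta_j)$, together with naturality of $\r(i)$; no pentagon is needed. Combined with the observation that a comonad coalgebra structure is recovered as the chosen lift against the free algebra on the same arrow (a monad unit law), this yields both your section and retraction claims; that chase does have to be written out, and your existence check is stated more thinly than it should be, but this is not where the argument fails. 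What each approach buys: yours is more economical for the case actually used later and renders Theorem~\ref{uniquethm} immediate; the paper's keeps the reflection formulation and treats all the allowed adjunctions uniformly.

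The genuine gap is the composite case, which the theorem explicitly covers (``or a composite of the two''). Your proposed reduction ``to its factors'' fails: a morphism lying over a composite adjunction $\M^\2\to\K^\2\to\N^\2$ comes with no awfs on the intermediate category $\K$, so it is not a composite of two adjunctions of awfs. Lemma~\ref{complem} composes given adjunctions of awfs but cannot decompose one, Lemma~\ref{adjawfsredefn} has no stated analogue over a composite, and Theorem~\ref{cgadjawfsthm} is proved only for left adjoints of the form $i\hast-$ (its proof leans on the composition criterion, stated for $\hhom(i,-)$). To cover composites on your route you would have to redo the construction $\mathrm{lift}$/$\mathrm{res}$/$\mathrm{adj}$, the composition criterion, and the adjunct-lifts compatibility directly for composite adjunctions. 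The paper's reflection framework avoids this: the properties it verifies (preservation of left Kan extensions, of pushout squares in the arrow category, and the $\oast$/$\odot$ and distributivity conditions for colax morphisms) are stable under composing morphisms, so checking the two generating kinds of adjunction suffices---a reduction that is not available in your formulation, where the very object to be produced (an adjunction of awfs over the composite) cannot be assembled from data over the factors.
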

\begin{proof}
Our argument extends the proof for Theorem \ref{universalSOAthm} given in I.6.22. We broaden our interpretation of the categories
\begin{equation}\label{semantics}{\small \xymatrix@C=17pt{\G^{\ladj}= \AWFS_{\ladj} \ar[r]^-{\G^{\ladj}_1} & \LAWFS_{\ladj} \ar[r]^-{\G^{\ladj}_2} & \Cmd(-)^{\2}_{\ladj} \ar[r]^-{\G^{\ladj}_3} & \CAT/(-)^{\2}_{\ladj}}}\end{equation}
of (I.6.21) and show that Garner's small object argument constructs a reflection along each forgetful functor. For each of these categories, the objects are the same as before, but we extend the class of morphisms to include those involving the sorts of adjunctions detailed above, always pointing in the direction of the left adjoint. A morphism in $\CAT/(-)^\2_\ladj$ is an adjunction between arrow categories together with a specified lift of the left adjoint to the fibers. A morphism in $\Cmd(-)^\2_\ladj$ is an adjunction between the arrow categories together with a specified colax comonad morphism over the left adjoint. $\LAWFS_\ladj$ is the full subcategory on comonads over the domain functor. $\AWFS_\ladj$ is the category of awfs and adjunctions of awfs.

Garner's small object argument constructs a reflection along $G_3^\ladj$ for the same reason as before:  left adjoints preserve left Kan extensions, regardless of how the adjunctions are defined. 

To apply the previous argument to demonstrate the reflection along $\G_2^\ladj$ in this setting, we must show that the functor $i \hast -$ preserves morphisms in the arrow category that are pushout squares in the underlying category. This follows from Lemma I.5.6 and the fact that the left adjoints $A \otimes -$ and $B\otimes-$ necessarily preserve pushouts. The rest of the argument is unchanged.

The final reflection along $\G_1^\ladj$ requires some work. The context for the argument of I.6.22 is the category $\FunF_\ladj$ whose objects are functorial factorizations and whose morphisms are colax morphisms of functorial factorizations lifting left adjoints. Because functors of the form $i \hast -$ preserve neither domains nor composability, a colax morphism of functorial factorizations $(i \hast -, \l(i)) \colon \vec{Q} \to \vec{E}$  now has the form displayed in the left-hand diagram (\ref{gammarhodefn}). These colax morphisms of functorial factorizations compose with those of Definition \ref{defn:colaxfunfact}, so it suffices to consider only those colax morphisms lifting functors the form $i \hast -$, the other case completed in the original proof. 

To apply the argument of I.6.22, we must show that the category $\FunF_\ladj$ has the following two properties. Each fiber, that is, each category of functorial factorizations on a fixed category, has two monoidal structures $\oast$ and $\odot$, given by re-factoring the right or the left factor, respectively. 
We must show \begin{itemize} \item a pair of morphisms $\phi, \psi$ lifting the same left adjoint $i \hast -$ can be combined to give $\phi \oast \psi$ and $\phi \odot \psi$ \item the distributive law $\a$ of \cite[\S 3.2]{garnercofibrantly} is natural with respect to colax morphisms lifting $i \hast -$, i.e., the following diagram commutes:
\[\xymatrix{ (\vec{X} \odot \vec{X'}) \oast (\vec{Z} \odot \vec{Z'}) \ar[r]^{\a} \ar[d]|{(\phi \odot \phi') \oast (\psi \odot \psi')} & (\vec{X} \oast \vec{Z}) \odot (\vec{X'} \oast \vec{Z'}) \ar[d]|{(\phi \oast \psi) \odot (\phi' \oast \psi')} \\ (\vec{Y} \odot \vec{Y'}) \oast (\vec{W} \odot \vec{W'}) \ar[r]_{\a} & (\vec{Y} \oast \vec{W} ) \odot (\vec{Y'} \oast \vec{W'})}\] \end{itemize} It follows that if $\phi$ and $\psi$ are in $\LAWFS_\ladj$, that is if $\phi$ and $\psi$ are $\odot$-comonoid morphisms, then so is $\phi \oast \psi$. 

We define the products $\phi \oast \psi$ and $\phi \odot \psi$ of colax morphisms lifting $i \hast -$ and leave the tedious but straightforward diagram chase exhibiting the distributive law to the reader. Given functorial factorizations $\vec{Q}=(C,F)$, $\vec{Q^*}\!=(C^*,F^*)$ on $\M$ and $\vec{E}=(L,R)$, $\vec{E^*}\!=(L^*,R^*)$ on $\N$ together with morphisms $\phi \colon \vec{Q} \to \vec{E}$ and $\psi \colon \vec{Q^*} \to \vec{E^*}$ lifting $i\hast-$, $\phi \oast \psi$ is the composite $E(\psi',1)\cdot \phi_{F^*}$ displayed below 
\[\xymatrix@C=95pt@R=40pt@!0{ **[r]{B \otimes  J\! \displaystyle\po_{A \otimes  J}\! A \otimes QF^*\!j} \ar[dr]^{B \otimes C^*\!j \sqcup 1} \ar[dd]^(.65){i \hast (CF^*\! \cdot C^*\!)j}  \ar[rrr]^{1 \sqcup A \otimes FF^*\!j} & & & **[l]{B \otimes  J\! \displaystyle\po_{A \otimes  J}\! A \otimes  K \ar[d]_{L^*\!(i \hast j)} \ar[dl]_{B \otimes C^*\!j \sqcup 1}} \\ & **[l]{B \otimes Q^*\!j\! \displaystyle\po_{A \otimes Q^*\!j}\! A \otimes QF^*\!j{\quad} \ar[r] } \ar[dl]^{i \hast CF^*\!j} & **[r]{B \otimes Q^*\!j\! \displaystyle\po_{A \otimes Q^*\!j}\! A \otimes  K} \ar[r]_-{\psi'_j}  \ar[d]_{L(i \hast F^*\!j)} & **[l]{E^*\!( i \hast j)} \ar[d]_{LR^*\!(i \hast j)} \\ **[r]{B \otimes QF^*\!j} \ar[rr]^{\phi_{F^*\!j}} \ar[d]^{\iota} & & E (i \hast F^*\!j) \ar[dd]_{R(i \hast F^*\!j)} \ar[r]^-{E(\psi'_j,1)} & **[l]{ER^*\!(i \hast j)} \ar[dd]_{RR^*\!(i \hast j)} \\ **[r]{B \otimes QF^*\!j\! \displaystyle\po_{A \otimes QF^*\!j}\! A \otimes  K} \ar[d]^{i \hast FF^*\!j} \ar[urr]_-{\phi'_{F^*\!j}} \\ **[r]{B \otimes  K} \ar@{=}[rr] & & B \otimes  K \ar@{=}[r] & **[l]{B \otimes  K}}\]
Similarly, $\phi \odot \psi$ is the composite $E(1 \sqcup A \otimes F^*\!,\psi) \cdot \phi_{C^*\!}$ displayed below
\[\xymatrix@C=90pt@R=40pt@!0{ **[r]{B \otimes  J \displaystyle\po_{A \otimes  J} A \otimes QC^*\!j} \ar[dd]^{i \hast CC^*\!j} \ar[drr]_{1 \sqcup A \otimes FC^*\!j} \ar[rrr]^{1 \sqcup A \otimes (F^*\! \cdot FC^*\!)j} & &  & B \otimes  J \displaystyle\po_{A \otimes  J} A \otimes  K \ar[dd]^{LL^*\!(i \hast j)} \\ &&  B \otimes  J \displaystyle\po_{A \otimes  J} A \otimes Q^*\!j \ar[ur]_{1 \sqcup A\otimes F^*\!j} \ar[d]^{L(i \hast C^*\!j)} \\ **[r]{B \otimes QC^*\!j} \ar[dd]^{\iota} \ar[dr]^{\iota} \ar[rr]^{\phi_{C^*\!j}} & & E(i \hast C^*\!j)  \ar[dd]^{R(i \hast C^*\!j)} \ar[r]_{E (1 \sqcup A \otimes F^*\!j, \psi)} & EL^*\! (i \hast j) \ar[dd]^{RL^*\!(i \hast j)} \\ & B \otimes QC^*\!j \displaystyle\po_{A \otimes QC^*\!j} A \otimes Q^*\!j \ar[dl]^(.4){1 \sqcup A \otimes F^*\!j} \ar[ur]^{\phi'_{C^*\!j}} \ar[dr]^-{i \hast FC^*\!j}&  \\ **[r]{B \otimes QC^*\!j \displaystyle\po_{A \otimes QC^*\!j} A \otimes  K}   \ar[d]^{i \hast (F^*\! \cdot FC^*\!)j} & & B \otimes Q^*\!j \ar[r]_{\psi_{j}} \ar[dll]^{B \otimes F^*\!j} & E^*\!(i \hast j) \ar[d]^{R^*\!(i \hast j)} \\ **[r]{B \otimes  K} \ar@{=}[rrr] & && B \otimes K  }\]
\end{proof}

The converse to Theorem \ref{existthm} follows as a corollary.

\begin{thm}[Uniqueness Theorem]\label{uniquethm} Fix a two-variable adjunction and awfs as in \ref{terriblentn}. If $\I$ and $\J$ generate $(\C',\F')$ and $(\C,\F)$, there is at most one two-variable adjunction of awfs $(\C',\F') \times (\C,\F) \to (\L,\R)$ whose lifted left adjoint restricts along the unit functors to a given $\I \times \J \to \Lalg$.
\end{thm}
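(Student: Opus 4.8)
The plan is to read this off the extended universal property of Theorem~\ref{extendedUPthm}, retracing the two-stage structure of the proof of Theorem~\ref{existthm}. First I would reduce to the single lifted functor $\hotimes\colon\Cpalg\times\Calg\to\Lalg$: a two-variable adjunction of awfs is completely determined by $\hotimes$, since its characterizing natural transformation $\vec{\l}$ is recovered from $\hotimes$ exactly as in the proof of Theorem~\ref{twovarcompthm}, whereupon $\vec{\r^\ell}$, $\vec{\r^r}$---and hence the remaining lifted functors $\hhoml$ and $\hhomr$---are forced as the parameterized mates of $\vec{\l}$ by the bijectivity in Lemma~\ref{paramateslem}. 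So it suffices to show that two lifted functors $\hotimes_1,\hotimes_2\colon\Cpalg\times\Calg\to\Lalg$ arising from two-variable adjunctions of awfs and restricting along the unit functors $\I\to\Cpalg$ and $\J\to\Calg$ to the same $\I\times\J\to\Lalg$ must be equal. Since both lie over the fixed functor $-\hast-\colon\K^\2\times\M^\2\to\N^\2$ and the forgetful functor $\Lalg\to\N^\2$ is faithful, it is in fact enough to check that they agree on objects---assign the same $\L$-coalgebra structure to each $i\hast j$---as agreement on morphisms then follows formally.

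I would establish this object statement in two stages. First, fix a generator $i\in\I$ with its canonical $\C'$-coalgebra structure from the unit functor; evaluating $\hotimes_k$ at $i$ gives a lifted functor $i\hast-\colon\Calg\to\Lalg$ which, by Theorem~\ref{twovarcompthm} and Lemma~\ref{adjawfsredefn}, is the lifted left adjoint of an adjunction of awfs $(i\hast-,\hhoml(i,-))\colon(\C,\F)\to(\L,\R)$ of the kind introduced in \S\ref{arisingssec}, and whose restriction along $\J\to\Calg$ is the common $\I\times\J\to\Lalg$ evaluated at $i$. Since $\J$ generates $(\C,\F)$, Theorem~\ref{extendedUPthm} forces $i\hast-\colon\Calg\to\Lalg$ to be the same for $k=1,2$; letting $i$ range over $\I$ pins down $\hotimes_k$ on $\I\times\Calg$. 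Second, fix an arbitrary $\C$-coalgebra $j$ (now not merely a generator) and evaluate in the other variable: $-\hast j\colon\Cpalg\to\Lalg$ is the lifted left adjoint of an adjunction of awfs $(-\hast j,\hhomr(j,-))\colon(\C',\F')\to(\L,\R)$, whose restriction along $\I\to\Cpalg$ was determined in the first stage. Since $\I$ generates $(\C',\F')$, Theorem~\ref{extendedUPthm} again forces $-\hast j\colon\Cpalg\to\Lalg$---in particular the $\L$-coalgebra $i\hast j$ for every $i\in\Cpalg$---to agree for $k=1,2$. Letting $j$ range over $\Calg$ gives agreement on all objects, hence $\hotimes_1=\hotimes_2$, and with it uniqueness of the entire two-variable adjunction of awfs.

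I expect the only point genuinely needing care to be the reduction used in each stage: that evaluating a two-variable adjunction of awfs at a single (co)algebra really produces a single-variable adjunction of awfs in the sense of Definition~\ref{newadjawfsdefn}, so that Theorem~\ref{extendedUPthm}---extended in \S\ref{upssec} precisely to cover adjunctions of the form $i\hast-\dashv\hhoml(i,-)$---is applicable. This comes down to the pentagon for $\l(i)$ of Lemma~\ref{adjawfsredefn} (and its analogue for the second variable), which holds because, by Theorem~\ref{twovarcompthm}, the lifted functor underlying a two-variable adjunction of awfs satisfies the composition criterion in each variable. By contrast, no analogue of the subtle final step of the proof of Theorem~\ref{existthm}---the appeal to the composition criterion to verify that $\hhomr(-,f)$ is a lax morphism of awfs---is needed here, since we are handed the full two-variable adjunction of awfs rather than assembling it; with the reduction above in place the statement is a formal corollary of Theorem~\ref{extendedUPthm}.
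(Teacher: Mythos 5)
Your argument is correct and is essentially the paper's own proof: after the same reductions (the whole structure is determined by the lifted left adjoint $\hotimes\colon\Cpalg\times\Calg\to\Lalg$, and agreement need only be checked on objects since both lifts lie over $-\hast-$), both proofs apply Theorem~\ref{extendedUPthm} twice, first extending from $\J$ to $\Calg$ at each generator $i\in\I$, then from $\I$ to $\Cpalg$ at each $j\in\Calg$. The only differences are presentational: the paper packages the two steps as a contradiction, whereas you argue directly and make explicit the point (implicit in the paper, via Theorem~\ref{twovarcompthm} and Lemma~\ref{adjawfsredefn}) that evaluating a two-variable adjunction of awfs at a single (co)algebra yields an adjunction of awfs of the kind covered by the extended universal property.
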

\begin{proof}
Suppose given a pair of two-variable adjunctions of awfs \[\Cpalg \times \Calg \rightrightarrows \Lalg\] extending $\I \times \J \to \Lalg$. On morphisms their behavior is completely specified by the condition that they lift $-\hast-$, so it suffices to consider whether these functors agree at each pair of objects. Restricting along the unit $\I \to \Cpalg$, we obtain a pair of functors $\I \times \Calg \rightrightarrows \Lalg$ necessarily distinct: if they agreed for each $j \in \Calg$, their extensions at each $j$, the adjunctions of awfs $\Cpalg \rightrightarrows \Lalg$, would also agree by Theorem \ref{extendedUPthm}. Now restricting these functors along the unit $\J \to \Calg$ we obtain, in both cases, the original $\I \times \J \to \Lalg$, by hypothesis. But this contradicts the argument just given: at each $i \in \I$, the extension to an adjunction of awfs $\Calg \to \Lalg$ is unique by Theorem \ref{extendedUPthm}. Thus, there can be at most one functor $\I \times \Calg \to \Lalg$, and hence at most one $\Cpalg \times \Calg \to \Lalg$ extending $\I \times \J \to \Lalg$.
\end{proof}

\section{Algebraic Quillen two-variable adjunctions}\label{algQuillsec}

An algebraic model category is a homotopical category equipped with a pair of interacting algebraic weak factorization systems and a comparison morphism. Hence, algebraic left and right Quillen functors must take into account these interactions. In this section, we extend our notions of morphisms of awfs to the model category setting, paving the way for the introduction of monoidal algebraic model structures in the next section.

\subsection{Algebraic model structures}

A \emph{homotopical category} $(\M,\W)$ is a complete and cocomplete category $\M$ together with a class of morphisms $\W$ called \emph{weak equivalences} that satisfy the 2-of-3 property. 

\begin{defn} A \emph{model structure} on a homotopical category $(\M,\W)$ consists of two classes of morphisms $\cC, \cF$ such that $(\cC \cap \W, \cF)$ and $(\cC, \cF \cap \W)$ are weak factorization systems.
\end{defn}

See \cite[\S 7]{joyaltierneyquasi} or \cite[\S 14.2]{maypontoconcise2} for proof that this definition agrees with the usual one.

\begin{defn} An \emph{algebraic model structure} on a homotopical category $(\M,\W)$ consists of a morphism of algebraic weak factorization systems $(\C_t,\F) \to (\C,\F_t)$ so that the underlying weak factorization systems $(\cC_t,\cF)$ and $(\cC,\cF_t)$ form a model structure on $\M$ with weak equivalences $\W$.
\end{defn}

An essential application of the universal properties of Theorem \ref{garnsthm} is:

\begin{thm}[I.3.6]\label{amsexistcor} 
An ordinary cofibrantly generated model structure with generating trivial cofibrations $\J$ and generating cofibrations $\I$ on a category permitting the small object argument has an algebraic model structure with the same generators if and only if the elements of $\J$ are $\I$-cellular, i.e., if and only if there is a functor $\J \to \Calg$ over $\M^\2$.
\end{thm}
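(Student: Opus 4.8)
The plan is to peel the ``algebraic'' layer off an ordinary cofibrantly generated model structure: by Theorem~\ref{garnsthm} the generating sets already produce the two awfs, and the universal property (I.2.26) will show that the only remaining datum --- the comparison morphism of awfs --- exists precisely when the stated cellularity condition holds.

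First I would set up notation. Since $\M$ permits the small object argument, Theorem~\ref{garnsthm} applies: $\J$ generates an awfs I will call $(\C_t,\F)$ and $\I$ generates an awfs $(\C,\F_t)$, and by (I.2.27) there are isomorphisms $\Falg\cong\J^\boxslash$ and $\Ftalg\cong\I^\boxslash$ over $\M^\2$; write $u\colon\J\to\Ctalg$ for the unit functor of (I.2.26). The crucial preliminary observation is that the underlying weak factorization systems of these two awfs are exactly the two weak factorization systems of the given model structure. Indeed, a map admits an $\F$-algebra structure if and only if it lies in $\J^\boxslash$, i.e.\ has the right lifting property against every element of $\J$; because the ordinary model structure is cofibrantly generated by $(\I,\J)$ this class equals $\cF$, which is already retract-closed, so the underlying wfs of $(\C_t,\F)$ has right class $\cF$ and hence left class ${}^\boxslash\cF=\cC\cap\W$. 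Symmetrically the underlying wfs of $(\C,\F_t)$ is $(\cC,\cF\cap\W)$. Consequently these two wfs form the given model structure on $(\M,\W)$ regardless of anything else, and an algebraic model structure on $(\M,\W)$ with generators $\I$ and $\J$ is nothing more nor less than a morphism of awfs $\xi\colon(\C_t,\F)\to(\C,\F_t)$ in the sense of Definition~\ref{defn:morawfs}.

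With that reduction in hand, I would invoke the universal property (I.2.26): restriction along $u$ sets up a bijection between morphisms of awfs $(\C_t,\F)\to(\C,\F_t)$ and functors $\J\to\Calg$ over $\M^\2$ --- a morphism $\xi$ gives the composite $\J\xrightarrow{u}\Ctalg\xrightarrow{\xi_*}\Calg$, and every functor $\J\to\Calg$ over $\M^\2$ is $\xi_* u$ for a unique such $\xi$ (equivalently one may quote Theorem~\ref{universalSOAthm} for the identity adjunction, since a morphism of awfs is an adjunction of awfs along the identity). Since $\Calg$ here is the category of $\C$-coalgebras of the $\I$-generated awfs, the existence of a functor $\J\to\Calg$ over $\M^\2$ is precisely the assertion that the elements of $\J$ are $\I$-cellular in a manner natural in $\J$. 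Combining this bijection with the previous reduction gives both implications at once.

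The only step carrying real content is the identification of the underlying weak factorization systems with $(\cC\cap\W,\cF)$ and $(\cC,\cF\cap\W)$; this is where cofibrant generation of the ordinary model structure is genuinely used, via $\Falg\cong\J^\boxslash$ and retract-closedness of $\cF$. Once that bookkeeping is done, the theorem is a direct corollary of the universal property of Garner's small object argument.
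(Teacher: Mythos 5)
Your proposal is correct and follows essentially the same route as the paper: both directions come from the universal property (I.2.26) of the unit functor $\J \to \Ctalg$ (forward by composing with $\xi_*$, converse by factoring the given $\J \to \Calg$ through the unit via a morphism of awfs), with the isomorphisms $\Falg\cong\J^\boxslash$, $\Ftalg\cong\I^\boxslash$ of (I.2.27) identifying the underlying wfs with those of the original model structure. Your explicit spelling-out of that last identification (via retract-closedness of the algebra classes) is just a more detailed version of the step the paper cites to (I.2.27).
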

\begin{proof}
Given such an algebraic model structure $(\C_t,\F) \to (\C,\F_t)$, the functor $\Ctalg \to \Calg$ determined by the comparison map defines $\C$-coalgebra structures for the generating trivial cofibrations. Conversely, let $(\C_t,\F)$ and $(\C,\F_t)$ denote the awfs generated by $\J$ and $\I$. Given $\J \to \Calg$, by (I.2.26) this functor factors through the unit $\J \to \Ctalg$ along a functor induced by a morphism of awfs $(\C_t,\F) \to (\C,\F_t)$. On account of the isomorphisms $\Falg \cong \J^\boxslash$, $\Ftalg \cong \I^\boxslash$ of (I.2.27) the underlying wfs of the awfs $(\C_t,\F)$ and $(\C,\F_t)$ coincide with the wfs in the ordinary model structure generated by $\J$ and $\I$. Hence, this algebraic model structure is compatible with the original model structure.
\end{proof}

\begin{ex}\label{ex:ssets} Quillen's model structure on simplicial sets is an algebraic model structure generated by sets $\I$ and $\J$ of sphere and horn inclusions. A horn inclusion $\Lambda^n_k \to \Delta^n$ factors through $\partial\Delta^n$; the first factor is a pushout of a map in $\I$ and the second factor is an element of $\I$. Both factors and hence their composite are canonically $\C$-coalgebras. In this way, we see that elements of $\J$ are cellular; the conclusion follows from Theorem \ref{amsexistcor}.
\end{ex}

\begin{rmk}
In fact, any cofibrantly generated model structure on a category permitting the small object argument gives rise to an algebraic model structure even if the elements of $\J$ aren't $\I$-cellular, though at the cost of changing one of the generating sets. See I.3.7 and I.3.8.
\end{rmk}

\subsection{Algebraic Quillen adjunctions}\label{algquillssec}

An \emph{algebraic Quillen adjunction} $T\colon \M \rightleftarrows \K \colon S$ between categories equipped with algebraic model structures consists of adjunctions of awfs with respect to the (trivial cofibration, fibration) and (cofibration, trivial fibration) awfs satisfying an additional compatibility condition.

\begin{defn}[I.3.11] Suppose $\M$ and $\K$ are categories with algebraic model structures $\xi^{\M} \colon (\C_t,\F) \to (\C,\F_t)$ and  $\xi^\K \colon (\L_t,\R) \to (\L, \R_t)$. An \emph{algebraic Quillen adjunction} is an adjunction $T  \colon \M \rightleftarrows \K\colon S$ together with adjunctions of awfs \[\xymatrix@C=50pt{  (\C_t, \F) \ar[dr]|-{(T,S)} \ar[r]^{(T,S)} \ar[d]_{\xi^{\M}} & (\L_t,\R) \ar[d]^{\xi^{\K}} \\ (\C,\F_t) \ar[r]_{(T,S)}   & (\L,\R_t) }\] such that both triangles commute.
\end{defn}
In particular, an algebraic Quillen adjunction consists of commuting lifted double functors
\begin{equation}\label{quilldbl}\raisebox{.25in}{\xymatrix{\Alg(\R_t) \ar[d]_{\xi^\K} \ar[r]^{S^\2} & \Alg(\F_t) \ar[d]^{\xi^\M} \\ \Alg(\R) \ar[r]_{S^\2} & \Alg(\F)}} \quad \mathrm{and} \quad \raisebox{.25in}{\xymatrix{ \Coalg(\C_t) \ar[d]_{\xi^\M} \ar[r]^{T^\2} & \Coalg(\L_t) \ar[d]^{\xi^\K} \\ \Coalg(\C) \ar[r]_{T^\2} & \Coalg(\L)}}\end{equation}
This compatibility condition is equivalent to (I.3.12), which asks that the ordinary lifted functors on algebraic (trivial) cofibrations and fibrations commute. Taking either perspective, functors on the left-hand or right-hand sides determine those on the other. In particular, it suffices to check commutativity of one of these two diagrams. For example:

\begin{thm} Suppose $\M$ has an algebraic model structure $\xi \colon (\C_t,\F) \to (\C,\F_t)$. Then the category $\M_*$ of pointed objects in $\M$ has an algebraic model structure such that the disjoint basepoint--forgetful adjunction $(-)_+ \dashv U \colon \M \rightleftarrows \M_*$ is an algebraic Quillen adjunction.
\end{thm}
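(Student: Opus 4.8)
\emph{The plan is to transport the algebraic model structure from $\M$ to the coslice category $\M_{*}=1/\M$ (pointed objects $=$ objects under the terminal object $1$) along the forgetful functor $U$, and to arrange the induced adjunction of awfs so that all of the data on the $U$-side is strict.}

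\emph{Lifting the awfs and the comparison map.} Let $(\L,\R)$ be either of the awfs $(\C_t,\F)$, $(\C,\F_t)$ on $\M$, with functorial factorization $\vec E=(L,R)$. Given $f\colon(X,x)\to(Y,y)$ in $\M_{*}$, factor $Uf$ in $\M$ as $X\xrightarrow{Lf}Ef\xrightarrow{Rf}Y$ and equip $Ef$ with the basepoint $Lf\cdot x$; since $Rf\cdot Lf\cdot x=f\cdot x=y$ this makes $Lf,Rf$ into morphisms of $\M_{*}$, functorially. I claim the comonad $\L$ and monad $\R$ lift along this factorization, so that $U^{\2}\colon\M_{*}^{\2}\to\M^{\2}$ strictly commutes with the factorization functor, with $L$ and $R$, and with $\vec\e,\vec\d,\vec\eta,\vec\mu$. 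The only thing to check is that the components of $\vec\e,\vec\d,\vec\eta,\vec\mu$---morphisms of $\M^{\2}$---respect the chosen basepoints; this is automatic because $\L$ is a comonad over $\dom$ and $\R$ a monad over $\cod$, so the relevant components have identity domain- (resp.\ codomain-) part and their remaining parts satisfy the retraction identities ($\vec\d$ postcomposed with $L$ of the map equals $L^{2}$ of the map; $\vec\mu$ postcomposed with $L$ of $Rf$ equals the identity) that force compatibility with $Lf\cdot x$. The comonad, monad, and distributive-law axioms then hold in $\M_{*}^{\2}$ because $U^{\2}$ is faithful. Performing this for both awfs of $\M$ yields awfs $(\C_{t*},\F_{*})$ and $(\C_{*},\F_{t*})$ on $\M_{*}$. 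The comparison $\xi$ lifts the same way: its components satisfy $\xi_{f}\cdot C_{t}f=Cf$ by \eqref{eq:funmor}, so $\xi_{f}$ carries the basepoint $C_{t}f\cdot x$ to $Cf\cdot x$ and is a morphism of $\M_{*}$, and the pentagons of Definition~\ref{defn:morawfs} are inherited by faithfulness; write $\xi_{*}$ for this lift.

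\emph{Identifying the underlying model structure.} Because awfs (co)algebra structures carry built-in retraction conditions, an arrow $f$ of $\M_{*}$ admits an $\F_{*}$-algebra structure (resp.\ a $\C_{t*}$-coalgebra structure) exactly when $Uf$ admits an $\F$-algebra (resp.\ $\C_{t}$-coalgebra) structure, the basepoint-compatibility again being automatic. A short lifting-problem argument---a coslice lifting problem descends to $\M$, and any solution in $\M$ is automatically a morphism of $\M_{*}$ since it equals a coslice morphism precomposed with $f$; conversely a lifting problem in $\M$ against a map admitting an algebra structure can be ``pointed'' by transporting basepoints along the given square---then shows that the left class of the underlying wfs of $(\C_{t*},\F_{*})$ is precisely $(U^{\2})^{-1}\cC_{t}$, and likewise $(U^{\2})^{-1}\cC$ for $(\C_{*},\F_{t*})$. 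By the standard construction of the model structure on an under-category \cite{hirschhornmodel}, the preimages $(U^{\2})^{-1}\cC_{t}$, $(U^{\2})^{-1}\cC$, $(U^{\2})^{-1}\cF$, $(U^{\2})^{-1}\cF_{t}$ are the classes of a model structure on $\M_{*}$ with weak equivalences $\W_{*}:=(U^{\2})^{-1}\W$ (which satisfies $2$-of-$3$ since $U$ preserves composites); since a wfs is determined by its left class, its two weak factorization systems coincide with the underlying wfs of $(\C_{t*},\F_{*})$ and $(\C_{*},\F_{t*})$. Hence $\xi_{*}$ defines an algebraic model structure on $(\M_{*},\W_{*})$.

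\emph{The algebraic Quillen adjunction.} From $\M_{*}(X_{+},(Y,y))\cong\M(X,Y)$ we obtain $(-)_{+}\dashv U$, and by construction $U$ strictly commutes with both factorizations and their $L,R$-parts; thus the identity natural transformation is a legitimate mate datum $\rho:=\mathrm{id}$, exhibiting $(U,\mathrm{id})$ as a (trivially) lax morphism of awfs $(\C_{t*},\F_{*})\to(\C_{t},\F)$ and $(\C_{*},\F_{t*})\to(\C,\F_{t})$. Since $\rho$ is the identity, the mate $\lambda$ of $\rho$ satisfies the colax morphism conditions because they are the mates, under Theorem~\ref{dblmatesthm}, of the (trivially satisfied) lax conditions; so $((-)_{+},U,\lambda,\mathrm{id})$ is an adjunction of awfs in the sense of Definition~\ref{defn:adjawfs} (cf.\ Lemma~\ref{mateslem}), both for the $(\C_t,\F)$-awfs and for the $(\C,\F_t)$-awfs. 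Finally, $\xi_{*}$ has the same underlying components as $\xi$ and all the $U$-side data is strict, so the $\Alg$-square of \eqref{quilldbl} commutes on objects---an $\F_{t*}$-algebra $(g,t)$ is sent both ways to $(Ug,\,Ut\cdot\xi_{Ug})$, using $U^{\2}\xi_{*,g}=\xi_{Ug}$---and hence as a square of double functors; by the remark following \eqref{quilldbl} this is enough to conclude that $(-)_{+}\dashv U$ is an algebraic Quillen adjunction.

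\emph{Main obstacle.} The substance is in the first two steps: one must verify that the basepoint-compatibility conditions required to lift $\vec\d$, $\vec\mu$, $\xi$, and the (co)algebra structures are automatically satisfied. They are, precisely because awfs (co)algebras are honest (co)algebras and therefore satisfy the (co)unit/retraction identities that pin down the components involved---the analogous lifting would fail for mere morphisms of arrows. Everything else is bookkeeping with faithful functors and the calculus of mates.
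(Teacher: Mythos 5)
Your proof is correct and takes essentially the same route as the paper: lift the factorizations, (co)monads, and comparison map strictly along $U$ (with basepoint compatibility automatic from the (co)unit and square identities), observe that (co)algebra structures in $\M_*$ coincide with those of the underlying arrows in $\M$, and deduce the algebraic Quillen adjunction from the resulting strict commutation, checking only the algebra-side square of \eqref{quilldbl}. The only difference is presentational: the paper packages the lifting step via the pullback $(\M_*)^\2 \cong \M^\2 \times_{\M} \M_*$ and 2-limits in $\CAT$ (and phrases the compatibility as a pullback of double categories of algebras), whereas you verify the same facts componentwise and via mates.
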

\begin{proof} The category $\M_*$ is isomorphic to the slice category $*/\M$, where $*$ denotes the terminal object. An arrow or a commutative square in $\M_*$ is determined by the arrow or square in the image of the forgetful functor together with the basepoint of its initial object; the other basepoints are defined by composition. This says that \[\xymatrix{ (\M_*)^\2 \ar[r]^{U^\2} \ar[d]_{\dom} \ar@{}[dr]|(.2){\lrcorner} & \M^\2 \ar[d]^\dom \\ \M_* \ar[r]_U & \M}\] is a pullback. We will see that this implies that the algebraic model structure on $\M$ can be lifted along $U$ to define an algebraic model structure on $\M_*$.

The comonad $\C$ is domain-preserving, so its constituent functor and natural transformations can be pulled back to $(\M_*)^\2$; this works for the 2-cells because limits in $\CAT$ are also 2-limits \cite{kellyelementary}. \[\xymatrix{ (\M_*)^\2 \ar[rr]^{U^\2} \ar@{-->}[dr]^{C_*} \ar@/_/[ddr]_{\dom} & & \M^\2 \ar[dr]^C \ar@/_/[ddr]|(.47){\hole}_(.67){\dom} \\ & (\M_*)^\2 \ar[d]^{\dom} \ar@{}[drr]|(.2){\lrcorner} \ar[rr]^{U^\2} && \M^\2 \ar[d]^\dom \\ & \M_* \ar[rr]_U & & \M}\] %The identity natural transformation $U^\2C_* \To CU^{\2}$ defines a colax morphism of comonads in such a way that the lifted functor $\C_*$-{\bf coalg}$ \to \Calg$ is a pullback of $U^\2 \colon (\M_*)^\2 \to \M^\2$. In particular, it preserves composition, and we can lift $\mu$. 
The multiplication for the monads also lifts to $\M_*$: e.g., the basepoint of $FRf$ is the image of the basepoint of $\dom f$, which maps to the basepoint of $Rf$, which proves that $\mu_f$ preserves basepoints. For similar reasons, the comparison map lifts along $U$. This defines an algebraic model structure we denote $\xi_* \colon ((\C_t)_*,\F_*) \to (\C_*, (\F_t)_*)$ on $\M_*$.

Algebra structures for fibrations in $\M_*$ are precisely algebra structures for the underlying fibrations in $\M$: the basepoint of $Rf$ is in the image of the basepoint of $\dom f$ and hence maps via the algebra structure map back to the basepoint of $\dom f$. It follows that the left-hand diagram \[\xymatrix{ \Alg(\F_*) \ar[r] \ar[d] \ar@{}[dr]|(.2){\lrcorner} & \Alg(\F) \ar[d] \\ \SSq(\M_*) \ar[r]_U & \SSq(\M)} \quad \quad \xymatrix{ \Alg((\F_t)_*) \ar[d]_{\xi_*} \ar[r] & \Alg(\F_t) \ar[d]^{\xi} \\ \Alg(\F_*) \ar[r] & \Alg(\F)} \] is a pullback in {\bf DblCAT}. By this fact and the definition of $\xi_*$, the right-hand square commutes, establishing the algebraic Quillen adjunction.
\end{proof}

By Theorems \ref{cgadjthm} and \ref{universalSOAthm}, the compatibility conditions \eqref{quilldbl} can be tested at the level of generating (trivial) cofibrations.

\begin{thm}\label{badthm}
Suppose that $\M$ and $\K$ have algebraic model structures, as above, such that the algebraic model structure on $\M$ is generated by categories $\J$ and $\I$. Then $T  \colon \M \rightleftarrows \K \colon S$ is an algebraic Quillen adjunction if and only if there exist commuting lifts \[\xymatrix@=10pt{\J \ar[dd] \ar@{-->}[rr] & & \Ltalg \ar'[d][dd] & & \J \ar[dr] \ar[dd] \ar[rr] & & \Ltalg \ar'[d][dd] \ar[dr]^{\xi^{\K}}  \\ & \I \ar[dl] \ar@{-->}[rr] & & {\Lalg} \ar[dl] &  & \Calg \ar[dl] \ar[rr] & & **[l]{\Lalg} \ar[dl] \\ \M^\2 \ar[rr]_{T^\2} & & \K^\2 & & \M^\2 \ar[rr]_{T^\2} & & \K^\2}\] 
in which case the algebraic Quillen adjunction is canonically determined.
\end{thm}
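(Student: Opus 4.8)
The plan is to reduce everything to the single-variable cellularity result Theorem~\ref{cgadjthm} and the universal property Theorem~\ref{universalSOAthm}, applied separately to the trivial-cofibration awfs and to the cofibration awfs, and then to observe that the only remaining ingredient of an algebraic Quillen adjunction---compatibility of the two adjunctions of awfs with the comparison maps---is exactly the commutativity of the second displayed square.

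The bookkeeping fact used throughout is that, by Theorem~\ref{amsexistcor} and its proof, an algebraic model structure ``generated by $\J$ and $\I$'' is one whose comparison map $\xi^{\M}\colon(\C_t,\F)\to(\C,\F_t)$ is the morphism of awfs corresponding, under the universal property (I.2.26), to a chosen cellular structure $\J\to\Calg$ over $\M^\2$; equivalently, the composite of the unit functor $\J\to\Ctalg$ with $\xi^{\M}\colon\Ctalg\to\Calg$ is that chosen $\J\to\Calg$, the slanted edge of the second diagram. For the forward implication, an algebraic Quillen adjunction supplies by definition adjunctions of awfs $(T,S)\colon(\C_t,\F)\to(\L_t,\R)$ and $(T,S)\colon(\C,\F_t)\to(\L,\R_t)$; composing their lifted left adjoints with the unit functors (the easy half of Theorem~\ref{cgadjthm}) gives the lifts $\J\to\Ltalg$ and $\I\to\Lalg$ over $T^\2$ of the first diagram, and restricting the commuting square of lifted coalgebra double functors $\xi^{\K}T^\2=T^\2\xi^{\M}$ of \eqref{quilldbl} along $\J\to\Ctalg$ gives the second diagram, using the bookkeeping fact to recognize $\J\to\Ctalg\to\Calg$ as the cellular structure.

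For the converse---the substantive direction---suppose the two lifts over $T^\2$ exist. By Theorem~\ref{cgadjthm} they promote to canonical adjunctions of awfs $(T,S)\colon(\C_t,\F)\to(\L_t,\R)$ and $(T,S)\colon(\C,\F_t)\to(\L,\R_t)$, and by the construction in that proof the lifted left adjoint of each restricts along the relevant unit functor to the lift it came from; in particular the second has lifted left adjoint $T^\2\colon\Calg\to\Lalg$, the edge of the second diagram. It remains to verify the compatibility condition of Definition~I.3.11: that the composite adjunction of awfs $\xi^{\K}\circ(T,S)\colon(\C_t,\F)\to(\L,\R_t)$ equals $(T,S)\circ\xi^{\M}$. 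Both are adjunctions of awfs over $T\dashv S$ with cofibrantly generated domain $(\C_t,\F)$, hence by Theorem~\ref{universalSOAthm} are determined by the restriction of their lifted left adjoints along the unit $\J\to\Ctalg$. That restriction is $\J\to\Ltalg\xrightarrow{\xi^{\K}}\Lalg$ in the first case (using the canonicity clause of Theorem~\ref{cgadjthm} to identify $\J\to\Ctalg\to\Ltalg$ with the given lift) and $\J\xrightarrow{\mathrm{cell}}\Calg\xrightarrow{T^\2}\Lalg$ in the second (using the bookkeeping fact); these coincide precisely by the hypothesized commutativity of the second diagram. Thus $T\dashv S$ is an algebraic Quillen adjunction---and, as recalled before the theorem, it is then automatic that both squares of \eqref{quilldbl} commute, one determining the other. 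It is canonically determined because its two constituent adjunctions of awfs are canonical by Theorem~\ref{cgadjthm} while compatibility is a property, not additional data.

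I expect the main obstacle to be purely organizational: keeping straight which universal property does what---(I.2.26) in its morphism-of-awfs form fixes $\xi^{\M}$ from the cellular structure, the two halves of Theorem~\ref{cgadjthm} translate between adjunctions of awfs and lifts on generators with the stated canonicity, and Theorem~\ref{universalSOAthm} supplies the uniqueness that converts agreement on generators into agreement of the two composite adjunctions of awfs. No new diagram chase is required; the proof only assembles results already in hand.
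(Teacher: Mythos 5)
Your proof is correct and follows essentially the same route as the paper: apply Theorem \ref{cgadjthm} to promote the two lifts on generators to adjunctions of awfs, then use the universal property of the unit $\J \to \Ctalg$ (Theorem \ref{universalSOAthm}) to reduce the compatibility condition of Definition I.3.11 to its restriction along $\J$, which is the hypothesized second diagram. Your phrasing via equality of the composite adjunctions of awfs $\xi^{\K}\circ(T,S)$ and $(T,S)\circ\xi^{\M}$ is just a mild repackaging of the paper's check that the square of lifted coalgebra functors commutes.
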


The first condition says that the images of $\J$ and $\I$ must be cellular for $\L_t$ and $\L$ respectively. The second condition says that the two canonical ways of assigning $\L$-coalgebra structures to $\J$---one using $\xi^\M$ and one lifted functor and the other using $\xi^\K$ and the other lifted functor---must agree.

\begin{proof}[Proof of Theorem \ref{badthm}] By Theorem \ref{cgadjthm}, the lifts of $T^\2$ give rise to adjunctions of awfs \[(T,S) \colon (\C_t,\F) \to (\L_t,\R) \quad\quad (T,S) \colon (\C,\F_t) \to (\L,\R_t).\] These combine to specify an algebraic Quillen adjunction if and only if the lifted functors  \begin{equation}\label{coalglifts}\xymatrix@=10pt{\Ctalg \ar[dr]^(.6){\xi^\M} \ar[dd] \ar[rr] & & \Ltalg \ar[dr]^{\xi^\K}\ar'[d][dd] \\ & \Calg \ar[dl] \ar[rr] & & \Lalg \ar[dl] \\ \M^\2 \ar[rr]_{T^\2} & & \K^\2}\end{equation} commute. The functor $\Ctalg \to \Ltalg$ is defined by factoring $\J \to \Ltalg$ through $\Ctalg$ using the universal property of Theorem \ref{universalSOAthm}. Again by the universal property of $\J \to \Ctalg$, (\ref{coalglifts}) commutes if and only if the restriction to $\J$ does, which was a hypothesis.
\end{proof}

In particular, the conditions of Theorem \ref{badthm} are satisfied if the algebraic model structure on $\K$ is constructed by lifting the algebraic model structure on $\M$ along an adjunction 

\begin{thm}[I.3.10, I.3.13] Suppose $\M$ has an algebraic model structure generated by $\J$ and $\I$, $T  \colon \M \rightleftarrows \K \colon S$ is an adjunction, and $\K$ permits the small object argument. If \begin{itemize} \item[($\dagger\dagger$)] $S$ maps the $T^\2\J$-cellular arrows into weak equivalences \end{itemize} then $T^\2\J$ and $T^\2\I$ generate an algebraic model structure on $\K$ such that $T \dashv S$ is canonically an algebraic Quillen adjunction.
\end{thm}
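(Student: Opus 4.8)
The plan is to assemble the statement from Garner's small object argument together with the recognition theorems already proved: first build the two algebraic weak factorization systems on $\K$, then check by hand that their underlying weak factorization systems form a model structure whose weak equivalences are the maps inverted by $S$, then produce the comparison morphism out of the algebraic data already present on $\M$, and finally invoke Theorem \ref{badthm} to upgrade the whole package to an algebraic Quillen adjunction.

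Since $T^\2$ is a functor and $\J$, $\I$ are small, $T^\2\J$ and $T^\2\I$ are small categories of arrows of the (complete and cocomplete) category $\K$, so by Theorem \ref{garnsthm} they generate awfs $(\L_t,\R)$ and $(\L,\R_t)$, equipped with unit functors $T^\2\J\to\Ltalg$ and $T^\2\I\to\Lalg$ over $\K^\2$. Write $\cL_t$, $\cR$, $\cL$, $\cR_t$ for the underlying classes and put $\W_\K:=(S^\2)^{-1}\W$, the class of $f$ with $Sf\in\W$; it satisfies the $2$-of-$3$ property because $\W$ does and $S$ is a functor. By the isomorphism $\Ralg\cong(T^\2\J)^\boxslash$ of Theorem \ref{garnsthm}(I.2.27), $\cR$ is the class of $f$ for which $Sf$ has the right lifting property against $\J$, that is, for which $Sf$ is a fibration of $\M$; similarly $\cR_t$ consists of those $f$ with $Sf$ a trivial fibration of $\M$, so $\cR_t=\cR\cap\W_\K$. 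The crux is the identity $\cL_t=\cL\cap\W_\K$. Here $\cL_t\subseteq\cL$ because $\cR_t\subseteq\cR$, and $\cL_t\subseteq\W_\K$ is exactly where ($\dagger\dagger$) enters: $\cL_t$ is the retract closure of the $T^\2\J$-cellular maps, which $S$ carries into $\W$ by ($\dagger\dagger$), and $\W$ is closed under retracts, as the weak equivalences of a model category always are. Conversely, if $f\in\cL\cap\W_\K$, factor $f=p\,i$ with $i\in\cL_t$ and $p\in\cR$; then $i\in\W_\K$ by the previous sentence, hence $p\in\W_\K$ by $2$-of-$3$, so $p\in\cR\cap\W_\K=\cR_t$; since $f\in\cL={}^\boxslash\cR_t$ lifts against $p$, the retract argument exhibits $f$ as a retract of $i$, so $f\in\cL_t$, the left class of an awfs being retract closed. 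Thus $(\cL_t,\cR)$ and $(\cL,\cR_t)$ are the two weak factorization systems of a model structure on $\K$ with weak equivalences $\W_\K$.

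For the comparison morphism, observe that the algebraic model structure $\xi^\M\colon(\C_t,\F)\to(\C,\F_t)$ provides a functor $\J\to\Calg$ over $\M^\2$, namely the composite of the unit $\J\to\Ctalg$ with the functor $\Ctalg\to\Calg$ induced by $\xi^\M$; this is the $\I$-cellularity of $\J$ from Theorem \ref{amsexistcor}. The unit functor $T^\2\I\to\Lalg$ together with Theorem \ref{cgadjthm} make $T\dashv S$ an adjunction of awfs $(\C,\F_t)\to(\L,\R_t)$; composing its lifted left adjoint $T^\2\colon\Calg\to\Lalg$ with $\J\to\Calg$ produces a functor $T^\2\J\to\Lalg$ over $\K^\2$, so $T^\2\J$ is cellular for $\L$. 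Feeding this functor and the ordinary model structure just constructed into Theorem \ref{amsexistcor} yields an algebraic model structure $\xi^\K\colon(\L_t,\R)\to(\L,\R_t)$ on $\K$ generated by $T^\2\J$ and $T^\2\I$; by the universal property (I.2.26) underlying that theorem, $\xi^\K$ is the unique morphism of awfs for which the composite of the unit $T^\2\J\to\Ltalg$ with the induced functor $\Ltalg\to\Lalg$ equals the functor $T^\2\J\to\Calg\to\Lalg$ above.

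It remains to apply Theorem \ref{badthm}. The two commuting lifts demanded there are precisely the Garner unit functors $T^\2\J\to\Ltalg$ and $T^\2\I\to\Lalg$, and the required compatibility --- that the $\L$-coalgebra structures assigned to $T^\2\J$ via $\xi^\K$ and the lifted left adjoint of $(\L_t,\R)$ agree with those assigned via $\xi^\M$ and the lifted left adjoint of $(\L,\R_t)$ --- is exactly the characterizing property of $\xi^\K$ recorded in the previous paragraph. Hence $T\dashv S$ is an algebraic Quillen adjunction, and it is canonically determined by the canonicity clause of Theorem \ref{badthm}. I expect the only non-formal step to be the model-structure verification of the second paragraph, where ($\dagger\dagger$) is indispensable and the classical retract argument must be rerun; the remaining steps are bookkeeping, the one point requiring care being that the comparison map $\xi^\K$ has to be built from the single functor $T^\2\J\to\Lalg$ that makes both the cellularity and the compatibility squares commute.
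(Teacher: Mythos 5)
Your proposal is correct and takes essentially the intended route: the paper itself only quotes this result from I.3.10 and I.3.13, and the argument there is exactly your assembly --- the standard transfer argument using ($\dagger\dagger$) and the retract argument to get the underlying model structure on $\K$, the universal property of Garner's small object argument (via Theorems \ref{amsexistcor} and \ref{cgadjthm}) to produce the comparison map $\xi^\K$ and the cellularity of $T^\2\J$, and Theorem \ref{badthm} to check the algebraic Quillen compatibility on generators. I see no gaps beyond the background assumption that ``permits the small object argument'' supplies the (co)completeness of $\K$ needed for the homotopical-category framework.
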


This gives an important class of algebraic Quillen adjunctions, including the geometric realization--total singular complex adjunction between simplicial sets and spaces, the adjunction between $G$-spaces and space-valued presheaves on the orbit category for a group $G$, the adjunctions establishing a projective model structure, as well as many other classical examples.

\subsection{Algebraic Quillen two-variable adjunctions}
 
If $\K$, $\M$, and $\N$ are model categories, the two-variable adjunction (\ref{twovaradj1}) is \emph{Quillen} if the following equivalent conditions are satisfied \cite{hoveymodel}: \begin{itemize} \item[(a)] $\otimes$ is a \emph{left Quillen bifunctor}: if $i \in \K^\2$ and $j \in \M^\2$ are cofibrations then $i\hast j \in \N^\2$ is a cofibration that is trivial if either $i$ or $j$ is \item[(b)]  $\homl$ is a \emph{right Quillen bifunctor}: if $i \in \K^\2$ is a cofibration and $f \in \N^\2$ is a fibration then $\hhoml(i,f) \in \M^\2$ is a fibration that is trivial if either $i$ or $f$ is \item[(c)] $\homr$ is a \emph{right Quillen bifunctor}: if $j \in \M^\2$ is a cofibration and $f \in \N^\2$ is a fibration then $\hhomr(j,f) \in \K^\2$ is a fibration that is trivial if either $j$ or $f$ is \end{itemize}
The equivalence of the three conditions rests on the interplay between adjunctions and lifting problems.
This should be thought of as a strengthening of the usual lifting axiom. For instance, the corresponding axiom (c) for simplicial model categories implies that any two solutions to a lifting problem under a cofibrant object are homotopic relative to that object \cite[\S II.3]{goerssjardinesimplicial}. 

A two-variable adjunction $(\otimes,\homl,\homr)$ is \emph{algebraic Quillen} if the two-variable adjunction $(\hast, \hhoml, \hhomr)$ lifts to functors of algebraic (trivial) cofibrations and fibrations as appropriate. The symmetry of the classical setting---the equivalence of conditions (a), (b), and (c)---is captured by the requirement that the parameterized mates of the natural transformation characterizing the lift of one of the functors $(\hast, \hhoml, \hhomr)$ characterize the others. 

\begin{defn}\label{algquilltwovardefn} 
Suppose $\K$, $\M$, and $\N$ have algebraic model structures \[ \xi^\K \colon (\C_t',\F') \to (\C',\F_t'),\hspace{.2cm}  \xi^\M \colon (\C_t,\F) \to (\C,\F_t),\hspace{.2cm}  \mathrm{and}\hspace{.2cm} \xi^\N \colon (\L_t,\R) \to (\L,\R_t).\] 
An \emph{algebraic Quillen two-variable adjunction} \[(\otimes, \homl, \homr) \colon \K \times \M \to \N\] consists of specified two-variable adjunctions of awfs \begin{align*}  & \otimes \colon (\C',\F_t') \times (\C,\F_t) \to (\L,\R_t)\\ &\otimes \colon (\C_t',\F') \times (\C,\F_t) \to (\L_t,\R) \\ &\otimes \colon (\C',\F_t') \times (\C_t,\F) \to (\L_t,\R)\end{align*} The algebraic Quillen two-variable adjunction is \emph{maximally coherent} if the lifted functors \begin{equation}\label{algbiquill}\xymatrix@!0@C=50pt@R=15pt{ & \Cptalg  \times \Ctalg \ar[dddr]^{\xi^\K \times 1} \ar@{-->}[ddrrr]  \ar[ddl]_(.7){1 \times \xi^\M}\\ \\ \Cptalg \times \Calg  \ar[dddr]_(.4){\xi^\K \times 1} \ar[rrrr]|(.41){\hole} & & & &  \Ltalg \ar[dddr]^{\xi^\N} \\  & & \Cpalg \times \Ctalg \ar[urr] \ar[ddl]^(.3){1 \times \xi^\M} \\  \\& \Cpalg \times \Calg \ar[rrrr] & & && \Lalg}\end{equation}
commute.
\end{defn} 

The condition (\ref{algbiquill}) asks that three squares relating each pair of two-variable adjunctions of awfs commute. By the calculus of parameterized mates, the coherence conditions (\ref{algbiquill}) are equivalent to coherence conditions for the lifts of $\hhoml$ or $\hhomr$. The proof requires the following lemma.

\begin{lem}\label{complem} Two-variable adjunctions of awfs can be composed with adjunctions of awfs (pointing in the correct direction) in any of the variables to obtain another two-variable adjunction of awfs.
\end{lem}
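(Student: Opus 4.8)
The plan is to reduce the statement to the principle of Lemma~\ref{paracomplem}---that a triple of parameterized mates composes with a compatibly directed pair of ordinary mates to produce another triple of parameterized mates---together with the elementary fact that lifts of functors to categories of (co)algebras for (co)monads compose. The three variables are handled in the same way, so I would treat one representative case and indicate the translation. Composition in the first or second variable precomposes $\otimes$ with the left adjoint of an adjunction of awfs into $(\C',\F')$ or $(\C,\F)$; composition ``in the third variable'' postcomposes $\otimes$ with the left adjoint of an adjunction of awfs out of $(\L,\R)$. Take the first case: fix a two-variable adjunction of awfs $(\otimes,\homl,\homr)\colon\K\times\M\to\N$ with data $(\C',\F')\times(\C,\F)\to(\L,\R)$ and an adjunction of awfs $(T,S,\a,\b)\colon(\mathbb{D}',\mathbb{G}')\to(\C',\F')$ whose underlying adjunction is $T\colon\K_0\rightleftarrows\K\colon S$ (we write $\a,\b$ for the mates to avoid clashing with the bifunctor data and to match the notation of Lemma~\ref{paracomplem}). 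The composite two-variable adjunction $\otimes(T\times1)\colon\K_0\times\M\to\N$ has left closure $\homl(T-,-)$ and right closure $S\homr(-,-)$, inducing on arrow categories the bifunctors $\hotimes\circ(T^\2\times1)$, $\hhoml(T^\2-,-)$, and $S^\2\hhomr(-,-)$.

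First I would assemble the three lifted functors. Because $(T,\a)$ is a colax morphism of awfs (Definition~\ref{defn:colaxmor}), it carries a lift $\overline{T}\colon\mathbb{D}'\text{-}\mathbf{coalg}\to\Cpalg$ over the left adjoint, and dually $(S,\b)$ carries a lift $\overline{S}\colon\Fpalg\to\mathbb{G}'\text{-}\mathbf{alg}$ over the right adjoint. Composing these with the lifted functors \eqref{eq:twovaradjlifts} of the two-variable adjunction of awfs produces lifts $\mathbb{D}'\text{-}\mathbf{coalg}\times\Calg\to\Lalg$, $(\mathbb{D}'\text{-}\mathbf{coalg})^{\op}\times\Ralg\to\Falg$, and $\Calg^{\op}\times\Ralg\to\mathbb{G}'\text{-}\mathbf{alg}$ of the three composite bifunctors. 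Since a two-variable adjunction of awfs is presented by lifted (double) functors and lifts of this kind compose, each composite automatically satisfies the (co)unit and (co)multiplication conditions required of a two-variable adjunction of awfs: the pentagon and two hexagons for the composite are obtained by pasting those for $\hotimes$, $\hhoml$, $\hhomr$ against the comonad- and monad-morphism identities carried by $\overline{T}$ and $\overline{S}$.

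Next I would identify the characterizing natural transformations and invoke Lemma~\ref{paracomplem}. Unwinding Lemma~\ref{appellem} and its dual, the transformation characterizing $\hotimes\circ(T^\2\times1)$ is the pasting of $\a$ with $\vec{\l}$, that characterizing $S^\2\hhomr(-,-)$ is the pasting of $\vec{\r^r}$ with $\b$, and that characterizing $\hhoml(T^\2-,-)$ is the pasting of $\vec{\r^\ell}$ with $\homl(\a,-)$. These are exactly the three composites treated by Lemma~\ref{paracomplem}, with $\a,\b$ in the role of the mates for the adjunction $T\dashv S$ and $\l,\r^\ell,\r^r$ in the role of the parameterized mates; hence the three resulting transformations are parameterized mates with respect to $\otimes(T\times1)\dashv\homl(T-,-)\dashv S\homr(-,-)$, which is precisely what Definition~\ref{twovaradjdefn} asks. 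The second-variable case is verbatim; the third-variable case instead postcomposes $\hotimes$ by a lift $\overline{T}$ and precomposes each closure, in its $\N$-slot, by a lift $\overline{S}$, and one again appeals to Lemma~\ref{paracomplem} in the appropriate variable.

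The main obstacle will be the bookkeeping in the middle step: confirming that the characterizing transformation of each \emph{composite} lifted functor is literally the pasted composite that Lemma~\ref{paracomplem} handles---keeping the variances straight and tracking how the vector-decorated data $\vec{\l},\vec{\r^\ell},\vec{\r^r}$ interact with composition of double functors. This is a routine diagram chase that the double-categorical packaging of \S\ref{dblcatssec} makes painless, rather than a genuine difficulty; the conceptual content is entirely contained in Lemma~\ref{paracomplem}.
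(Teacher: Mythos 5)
Your argument is correct and is essentially the paper's own proof: compose the lifted functors, observe via Lemma \ref{appellem} that the characterizing natural transformation of each composite lift is the pasted composite of the characterizing transformations, and then invoke Lemma \ref{paracomplem} to conclude that these pasted composites are again parameterized mates, so the composite satisfies Definition \ref{twovaradjdefn}. The extra remarks about the pentagon/hexagon conditions and the case analysis over the three variables are harmless elaborations of the same argument.
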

\begin{proof} The functors lifting the left adjoints can clearly be composed; unpacking Lemma \ref{appellem}, the natural transformation characterizing the composite is a pasted composite of the natural transformations characterizing each piece. By Lemma \ref{paracomplem} and the calculus of parameterized mates, the parameterized mates of this composite natural transformation are obtained by pasting the mates of characterizing natural transformations, and hence characterize the functors obtained by composing the appropriate right adjoints. So we see that the composite is again a two-variable adjunction of awfs. 
\end{proof}

Note that a maximally coherent algebraic Quillen two-variable adjunction also specifies  a fourth two-variable adjunction of awfs $\otimes \colon (\C_t',\F') \times (\C_t,\F) \to (\L_t,\R)$ whose lifted functor is the dotted arrow of \eqref{algbiquill}. 

\begin{cor}\label{algbiquillcor} The lifted functors (\ref{algbiquill}) commute if and only if the lifts 
\begin{equation}\label{algbiquillhom}\xymatrix@!0@C=50pt@R=15pt{ & \Cptalg  \times \Rtalg \ar[dddr]^{\xi^\K \times 1} \ar@{-->}[ddrrr]  \ar[ddl]_(.7){1 \times \xi^\N} \\ \\ \Cptalg \times \Ralg  \ar[dddr]_(.4){\xi^\K \times 1} \ar[rrrr]|(.41){\hole} & & & &  \Ftalg \ar[dddr]^{\xi^\M} \\  & & \Cpalg \times \Rtalg \ar[urr] \ar[ddl]^(.3){1 \times \xi^\N} \\  \\& \Cpalg \times \Ralg \ar[rrrr] & & && \Falg}\end{equation} of $\hhoml$ commute, and similarly for $\hhomr$. 
\end{cor}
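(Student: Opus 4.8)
The plan is to deduce the equivalence from the calculus of parameterized mates, exactly as the remark preceding the corollary suggests. The key observation is that all the functors appearing in the cube \eqref{algbiquill} are lifts of the pushout-product $-\hast-$, characterized by natural transformations of the shape $\vec{\l}$ as in Definition \ref{twovaradjdefn}, while all the functors in \eqref{algbiquillhom} are lifts of the pullback-hom $\hhoml$, characterized by natural transformations of the shape $\vec{\r^\ell}$. By Definition \ref{twovaradjdefn} these two families are related, two-variable adjunction by two-variable adjunction, by the parameterized mates correspondence of Lemma \ref{paramateslem}. The comparison maps $\xi^\K$, $\xi^\M$, $\xi^\N$ are morphisms of awfs in the sense of Definition \ref{defn:morawfs}, hence in particular adjunctions of awfs (with identity underlying adjunction), and so by Lemma \ref{complem} each composite around an edge of \eqref{algbiquill} is again the lifted left adjoint of a two-variable adjunction of awfs, whose characterizing natural transformation is the evident pasted composite; dually each composite around the corresponding edge of \eqref{algbiquillhom} is the lifted functor of the bilax side of a two-variable adjunction of awfs, with characterizing natural transformation the pasted composite of parameterized mates.

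First I would make precise the claim that a commuting triangle or square of \emph{lifted left adjoints} over $-\hast-$ is equivalent to the corresponding diagram of \emph{characterizing natural transformations} commuting. This is just the faithfulness half of Lemma \ref{appellem} (and its two-variable analogue built into Definition \ref{twovaradjdefn}): a lifted functor over a fixed base functor is uniquely determined by its characterizing natural transformation, so two lifted functors agree if and only if their characterizing transformations agree, and a diagram of lifted functors commutes if and only if the induced diagram of natural transformations does. Thus the cube \eqref{algbiquill} commutes if and only if a certain diagram $D_\otimes$ of natural transformations $Q'\otimes Q \To E(-\hast-)$ (over the various awfs) commutes, and \eqref{algbiquillhom} commutes if and only if the analogous diagram $D_{\homl}$ of natural transformations $Q\hhoml \To \homl(Q', E)$ commutes.

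Next I would invoke Lemma \ref{paracomplem}: the parameterized mates correspondence is compatible with composition against ordinary mates pointing in compatible directions. The maps $\xi^\K$, $\xi^\M$, $\xi^\N$ contribute, after pasting, ordinary mates in the appropriate variable, so applying the parameterized mates correspondence node-by-node to $D_\otimes$ produces precisely $D_{\homl}$ — every arrow of $D_\otimes$ is carried to the correspondingly-labelled arrow of $D_{\homl}$, because taking parameterized mates of a pasted composite is the pasted composite of the parameterized mates (Lemma \ref{paracomplem} together with Lemma \ref{natmateslem} and Theorem \ref{dblmatesthm}). Since the parameterized mates correspondence is a \emph{bijection} on natural transformations (Lemma \ref{paramateslem}), $D_\otimes$ commutes if and only if $D_{\homl}$ commutes. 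The same argument with $\homl$ replaced by $\homr$ throughout, using the $\vec{\r^r}$-component of Definition \ref{twovaradjdefn}, gives the statement for $\hhomr$. The one point requiring genuine care — and the step I expect to be the main obstacle — is bookkeeping the variances and directions: one must check that the three comparison morphisms really do paste on in the variables demanded by the shape of the cube (e.g. $\xi^\K$ in the first variable, $\xi^\N$ in the ``output'' variable of $\homl$, which on the $\hhoml$-side becomes an input variable), so that Lemma \ref{paracomplem} applies verbatim and the labels on \eqref{algbiquillhom} are exactly the parameterized mates of those on \eqref{algbiquill}; this is routine but must be done explicitly to be sure the dotted arrows correspond. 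I would record this variance check and otherwise leave the diagram chase to the reader.
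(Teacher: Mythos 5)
Your argument is correct and follows the paper's own route: the paper likewise deduces the equivalence by noting that each commuting square of \eqref{algbiquill} determines a composite two-variable adjunction of awfs (via Lemma \ref{complem}), whose parameterized mate characterizes the corresponding square of \eqref{algbiquillhom}, with the bijectivity of the parameterized mates correspondence giving the converse. Your explicit appeal to the faithfulness of the lift/natural-transformation correspondence and the variance bookkeeping simply spells out what the paper's one-sentence proof leaves implicit.
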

\begin{proof} A parameterized mate of the composite two-variable adjunction of awfs defined by each commuting square of (\ref{algbiquill}) characterizes the corresponding commuting square of (\ref{algbiquillhom}). 
\end{proof}

Evaluating a maximally coherent algebraic Quillen two-variable adjunction at an algebraic cofibrant object or an algebraic fibrant object gives rise to an ordinary algebraic Quillen adjunction.

\begin{lem} If $(\otimes,\homl,\homr) \colon \K \times \M \to \N$ is a maximally coherent algebraic Quillen two-variable adjunction and $A$ is an algebraic cofibrant object of $\K$, then $A \otimes -\colon \M \rightleftarrows \N \colon  \homl(A,-) $ is canonically an algebraic Quillen adjunction. Dually, if $X$ is an algebraic fibrant object of $\N$, then $\homl(-,X) \colon \K \rightleftarrows \M^\op \colon  \homr(-,X)$ is canonically an algebraic Quillen adjunction.
\end{lem}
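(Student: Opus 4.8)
The plan is to produce the two required algebraic Quillen adjunctions by \emph{evaluating} the component two-variable adjunctions of awfs of the given maximally coherent structure at $A$ (respectively $X$) together with its coalgebra (respectively algebra) structure, and then to verify the comparison-map compatibility by evaluating the relevant coherence square. First I would unwind the hypothesis: an algebraic cofibrant object $A$ of $\K$ is a coalgebra for the cofibrant-replacement comonad of $\xi^\K$, which unwinds to a $\C'$-coalgebra structure on the map $0_\K \to A$ out of the initial object, so that $0_\K \to A$ is an object of $\Cpalg$. Then comes a ``nullary'' computation: the functors $- \otimes m \colon \K \to \N$, being left adjoints, preserve the initial object, so the pushout-product $(0_\K \to A) \hast j$ reduces to $A \otimes j$ for every $j$ in $\M^\2$; that is, $(0_\K \to A) \hast -$ is exactly $(A \otimes -)^\2$, the functor on arrow categories induced by $A \otimes - \colon \M \to \N$. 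Dually $\homl(0_\K, -) \colon \N \to \M$ is a right adjoint, hence takes values in the terminal object $1_\M$, so $\hhoml(0_\K \to A, f) = \homl(A, f)$ for every $f$ in $\N^\2$, i.e.\ $\hhoml(0_\K \to A, -) = \homl(A,-)^\2$; and under these identifications the adjunction $(A\otimes-)^\2 \dashv \homl(A,-)^\2$ on arrow categories is precisely the one obtained from the induced two-variable adjunction \eqref{twovaradj} by fixing $0_\K \to A$ in the first variable.

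The core step is to evaluate at the $\C'$-coalgebra $0_\K \to A$ the two component two-variable adjunctions of awfs whose first slot is $(\C',\F_t')$, namely $\otimes \colon (\C',\F_t') \times (\C,\F_t) \to (\L,\R_t)$ and $\otimes \colon (\C',\F_t') \times (\C_t,\F) \to (\L_t,\R)$. As in the proof of Theorem \ref{twovarcompthm}, fixing a $\C'$-coalgebra in the first variable of a two-variable adjunction of awfs yields a lifted functor characterized by the natural transformation obtained by fixing that variable in $\vec{\l}$; and since the characterizing transformations of a two-variable adjunction of awfs are parameterized mates (Definition \ref{twovaradjdefn}), fixing one variable turns them into ordinary mates by Lemma \ref{paramateslem}. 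Because $(0_\K \to A) \hast -$ is literally $(A \otimes -)^\2$, the general notion of adjunction of awfs of Definition \ref{newadjawfsdefn} here collapses to the pointwise notion of Definition \ref{defn:adjawfs}, so this presents $A \otimes - \dashv \homl(A,-)$ as an adjunction of awfs $(\C,\F_t) \to (\L,\R_t)$ and, from the second two-variable adjunction, as an adjunction of awfs $(\C_t,\F) \to (\L_t,\R)$.

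It remains to fit these two adjunctions of awfs into a commuting square with the comparison maps $\xi^\M \colon (\C_t,\F) \to (\C,\F_t)$ and $\xi^\N \colon (\L_t,\R) \to (\L,\R_t)$. For this I would evaluate at $0_\K \to A$ the sub-square of \eqref{algbiquill} with vertices $\Cpalg \times \Ctalg$, $\Ltalg$, $\Cpalg \times \Calg$, and $\Lalg$ — the unique square relating exactly the two two-variable adjunctions of awfs used above, with domain leg $1 \times \xi^\M$ and codomain leg $\xi^\N$. Restricting the displayed equality of functors $\Cpalg \times \Ctalg \rightrightarrows \Lalg$ to the object $0_\K \to A$ in the first variable produces precisely the commuting square of lifted functors on coalgebras demanded by the definition (I.3.11) of an algebraic Quillen adjunction; as recorded after \eqref{quilldbl}, commutativity of this coalgebra square already suffices. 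This gives the first statement, with the algebraic structure canonically determined by the construction. The second statement is entirely dual: an algebraic fibrant object $X$ of $\N$ is an $\R$-algebra structure on $X \to 1_\N$; the nullary computations now read $\hhoml(i, X \to 1_\N) = \homl(i, X)$ and $\hhomr(j, X \to 1_\N) = \homr(j, X)$ because right adjoints preserve terminal objects; and one evaluates the $\hhoml$- and $\hhomr$-components, together with the associated coherence squares of Corollary \ref{algbiquillcor}, of the two component two-variable adjunctions of awfs having codomain monad $\R$ at the algebra $X \to 1_\N$, obtaining the algebraic Quillen adjunction $\homl(-,X) \dashv \homr(-,X) \colon \K \rightleftarrows \M^\op$ with $\M^\op$ carrying the opposite algebraic model structure.

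The step I expect to be the main obstacle is not conceptual but a matter of care: one must check that fixing $0_\K \to A$ (resp.\ $X \to 1_\N$) inside a two-variable adjunction of awfs and transporting along the canonical isomorphisms above really does return an \emph{ordinary} adjunction of awfs — in particular that the comultiplication and multiplication coherence conditions of the two-variable structure specialize on the nose to those of Definition \ref{defn:adjawfs} — and that the coherence square of \eqref{algbiquill} specializes exactly to the square \eqref{quilldbl} defining an algebraic Quillen adjunction. Once these identifications are pinned down, the remainder is the formal calculus of parameterized mates developed in \S\ref{matessec}.
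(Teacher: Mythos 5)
Your proposal is correct and follows essentially the same route as the paper: identify the algebraic cofibrant object with a $\C'$-coalgebra $\emptyset \to A$, note that $(\emptyset \to A) \hast - \dashv \hhoml(\emptyset \to A, -)$ collapses to the pointwise adjunction $(A\otimes-)^\2 \dashv \homl(A,-)^\2$, and evaluate the front rectangle of \eqref{algbiquill} at this coalgebra to produce the algebraic Quillen adjunction. The only difference is that you spell out the nullary computations and the dual case via Corollary \ref{algbiquillcor}, which the paper leaves implicit.
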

\begin{proof}
Using the notation of Definition \ref{algquilltwovardefn}, an algebraic cofibrant object $A$ is a $\C'$-coalgebra $i \colon \emptyset \to A$. The adjunction $i \hast - \dashv \hhoml(i,-)$ coincides with the pointwise-defined adjunction \[A \otimes - \colon \M^\2 \rightleftarrows \N^\2 \colon \homl(A,-).\] Hence, upon evaluating at $i \in \Cpalg$, the front rectangle of (\ref{algbiquill}) exhibits the desired algebraic Quillen adjunction.
\end{proof}

In analogy with Theorem \ref{badthm}, when the domain algebraic model structures are cofibrantly generated, the cellularity and uniqueness theorems give a characterization of algebraic Quillen two-variable adjunctions.

\begin{cor} Suppose the algebraic model structures on $\K$ and $\M$ are cofibrantly generated, with generating categories $\J'$, $\I'$, $\J$, and $\I$. Then $(\otimes,\homl, \homr)$ is an algebraic Quillen two-variable adjunction if and only if the category $\I' \times \I$ is $\L$-cellular and the categories $\J' \times \I$ and $\I' \times \J$ are $\L_t$-cellular, and is maximally coherent if and only if the following diagrams commute.
\[\xymatrix@C=35pt@R=20pt@!0{ & \I' \times \J \ar[drr]  \ar[dddl]|(.35){\hole} \\\J' \times \I \ar[dd] \ar[rrr] & & & \Ltalg \ar[dd] \\ \\ \save[]{\Cpalg \times \Calg}\restore \ar[rrr] && & \Lalg} \xymatrix@C=30pt@R=20pt@!0{ \J' \times \J \ar[dd] \ar[rrr] && & \save[]{\Cptalg \times \Calg}\restore \ar[dd] \\  \\ \save[]{\Cpalg \times \Ctalg}\restore \ar[rrr]&& & **[l]{\Ltalg}  }\]
\end{cor}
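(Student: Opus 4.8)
The plan is to derive the corollary from the cellularity theorem (Theorem~\ref{existthm}), the uniqueness theorem (Theorem~\ref{uniquethm}), and the composability of two-variable adjunctions of awfs with adjunctions of awfs (Lemma~\ref{complem}), following the pattern of the single-variable statement Theorem~\ref{badthm}.

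For the first equivalence I would unwind Definition~\ref{algquilltwovardefn}: an algebraic Quillen two-variable adjunction is a choice of three two-variable adjunctions of awfs $(\C',\F_t')\times(\C,\F_t)\to(\L,\R_t)$, $(\C_t',\F')\times(\C,\F_t)\to(\L_t,\R)$, and $(\C',\F_t')\times(\C_t,\F)\to(\L_t,\R)$. The hypothesis provides cofibrant generators $\I'$, $\I$, $\J'$, $\J$ for $(\C',\F_t')$, $(\C,\F_t)$, $(\C_t',\F')$, $(\C_t,\F)$, so Theorem~\ref{existthm} applies to each of the three (it imposes no cofibrant-generation hypothesis on the codomain awfs on $\N$): they exist if and only if $\I'\hast\I$ is $\L$-cellular, $\J'\hast\I$ is $\L_t$-cellular, and $\I'\hast\J$ is $\L_t$-cellular, respectively. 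This is exactly the stated condition.

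For maximal coherence the key observation is that every composite functor occurring in the hexagon \eqref{algbiquill} is itself the lifted left adjoint of a two-variable adjunction of awfs. A morphism of awfs is an adjunction of awfs with identity underlying adjunction, so Lemma~\ref{complem} lets us post-compose any of the three two-variable adjunctions of awfs with $\xi^\K$, $\xi^\M$, or $\xi^\N$ in the appropriate variable---including the codomain variable $\N$ in the case of $\xi^\N$---and the lifted left adjoint of the composite is the evident composite of functors on coalgebras. Thus each of the three comparison squares of \eqref{algbiquill} presents two functors that are lifted left adjoints of two-variable adjunctions of awfs of one common type: of type $(\C_t',\F')\times(\C,\F_t)\to(\L,\R_t)$ for the square through $\Cptalg\times\Calg$, of type $(\C',\F_t')\times(\C_t,\F)\to(\L,\R_t)$ for the square through $\Cpalg\times\Ctalg$, and of type $(\C_t',\F')\times(\C_t,\F)\to(\L_t,\R)$ for the square through $\Cptalg\times\Ctalg$. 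By Theorem~\ref{uniquethm}, two such functors agree---equivalently, the square commutes---if and only if they restrict to the same functor along the unit functors (Theorem~\ref{garnsthm}) into the product of the relevant generators. Restricting the three squares along $\J'\times\I$, $\I'\times\J$, and $\J'\times\J$ respectively, and identifying each arrow as a unit functor, a functor induced by one of the $\xi$'s, or a lifted pushout-product, one sees that these three restricted conditions are precisely the $\J'\times\I$-triangle of the first displayed diagram, the $\I'\times\J$-triangle of the first displayed diagram, and the square of the second displayed diagram. Since \eqref{algbiquill} commutes exactly when all three of its comparison squares do, this proves the second equivalence; the corresponding statement phrased via $\hhoml$ or $\hhomr$ is then immediate from Corollary~\ref{algbiquillcor}, though it is not needed here.

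The main obstacle is bookkeeping rather than any new idea: one must match each edge of \eqref{algbiquill} and of the two displayed diagrams with the correct composite, and confirm that Lemma~\ref{complem} genuinely licenses composition with $\xi^\N$ in the codomain slot so that the uniqueness theorem applies. Once this is set up, the three reductions are routine diagram chases of the same flavor as in Theorem~\ref{badthm}.
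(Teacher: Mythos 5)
Your proposal is correct and follows exactly the route the paper intends: the paper states this corollary without proof as an immediate consequence of the cellularity theorem (Theorem \ref{existthm}) applied to each of the three constituent two-variable adjunctions of awfs, and of the uniqueness theorem (Theorem \ref{uniquethm}) together with Lemma \ref{complem}, which reduces commutativity of each square of \eqref{algbiquill} to its restriction along the unit functors on generators, in analogy with Theorem \ref{badthm}. Your identification of the composites in each square as lifted left adjoints of composite two-variable adjunctions of awfs, including composition with $\xi^\N$ in the codomain variable, is precisely the bookkeeping the paper has in mind.
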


\section{Monoidal algebraic model structures}\label{monsec}

We are finally in a position to introduce the main definition.

\begin{defn}\label{monamsdefn} A \emph{monoidal algebraic model structure} on a closed monoidal category $(\otimes,\homl,\homr) \colon \M \times \M \to \M$ with monoidal unit $\1$ is an algebraic model structure $\xi \colon (\C_t,\F) \to (\C,\F_t)$
such that \begin{enumroman}\item $(\otimes, \homl, \homr)$ is an algebraic Quillen two-variable adjunction \item tensoring on either side with $\e_\1 \colon Q\1 \to \1$, the cofibrant replacement comonad counit, sends cofibrant objects to weak equivalences. \end{enumroman}
\end{defn}

Monoidal algebraic model categories are in particular monoidal model categories in the sense of \cite{hoveymodel}. It makes no difference whether condition (ii) is stated for algebraic cofibrant objects or ordinary cofibrant objects. If the unit $\1$ is cofibrant, (ii) is automatic from (i) and Ken Brown's lemma.

In the case where the monoidal structure is symmetric, a two-variable adjunction of awfs $(\C_t,\F) \times (\C,\F_t) \to (\C_t,\F)$ gives rise to a two-variable adjunction of awfs $(\C,\F_t) \times (\C_t,\F) \to (\C_t,\F)$ by composing with the symmetry isomorphism. When $(\C,\F_t)$ is generated by $\I$, Theorem \ref{uniquethm} implies that the two-variable adjunction of awfs $(\C,\F_t) \times (\C,\F_t) \to (\C,\F_t)$ commutes with the symmetry isomorphism if and only if the functor $\I \hast \I \to \Calg$ is defined symmetrically. Thus:

\begin{defn} A \emph{symmetric monoidal algebraic model structure} on a closed symmetric monoidal category $(\otimes,\hom,\hom) \colon \M \times \M \to \M$ with monoidal unit $\1$ is an algebraic model structure such that \begin{enumroman}\item $(\otimes, \hom, \hom)$ is an algebraic Quillen two-variable adjunction such that the lifted functors of algebraic (trivial) cofibrations commute up to isomorphism with the symmetry isomorphism
 \item tensoring with $\e_\1 : Q\1 \to \1$ sends cofibrant objects to weak equivalences \end{enumroman}
\end{defn}

We now use Theorems \ref{twovarcompthm}, \ref{existthm}, and \ref{uniquethm} to find examples.

\begin{thm}\label{folkthm} The folk model structure on $\Cat$ is a maximally coherent symmetric monoidal algebraic model structure.
\end{thm}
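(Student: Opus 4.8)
The plan is to identify the folk model structure on $\Cat$ as cofibrantly generated, invoke the universal property of Garner's small object argument to obtain the algebraic structure, and then verify the hypotheses of the cellularity, uniqueness, and symmetry criteria by a finite inspection of pushout-products of generators.

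First I would recall that the folk model structure has the equivalences of categories as weak equivalences and is cofibrantly generated by $\I=\{\emptyset\to\1,\ \1\sqcup\1\to\2,\ \mathbf{P}\to\2\}$ and $\J=\{\1\to\iso\}$, where $\mathbf{P}$ is the free-living parallel pair of arrows (so $\mathbf{P}\to\2$ identifies them) and $\iso$ is the free-living isomorphism. Since $\Cat$ is locally presentable it permits the small object argument, and $\1\to\iso$ is a finite relative $\I$-cell complex — adjoin an object, then two arrows, then impose the two invertibility relations, each step a pushout of a generator — so Theorem \ref{amsexistcor} produces an algebraic model structure $\xi\colon(\C_t,\F)\to(\C,\F_t)$ on $\Cat$ generated by $\I$ and $\J$. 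The cartesian product makes $(\times,[-,-],[-,-])$ a closed symmetric monoidal structure on $\Cat$ with unit $\1$, and $\1$ is cofibrant (since $\emptyset\to\1$ is a cofibration), so condition (ii) of Definition \ref{monamsdefn} and of its symmetric variant is automatic. It remains to exhibit $(\times,[-,-],[-,-])$ as a maximally coherent algebraic Quillen two-variable adjunction compatible with the symmetry.

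By the final corollary of \S\ref{algQuillsec}, together with the Cellularity and Uniqueness Theorems \ref{existthm} and \ref{uniquethm}, this comes down to: (a) producing lifts $\I\times\I\to\Calg$, $\J\times\I\to\Ctalg$, and $\I\times\J\to\Ctalg$ over the pushout-product $-\htimes-$; (b) checking the two maximal-coherence squares of that corollary; and (c) — for symmetry — checking, via the Remark preceding the definition of symmetric monoidal algebraic model structure, that the lift $\I\htimes\I\to\Calg$ may be chosen symmetrically. All three reduce to computing the pushout-products of generators in $\Cat$. I would record the following: $i\htimes(\emptyset\to\1)\cong i$ for every generator $i$; $(\1\sqcup\1\to\2)\htimes(\1\sqcup\1\to\2)$ is a single pushout of $\mathbf{P}\to\2$, the domain being the free category on a square and the map to $\2\times\2$ imposing commutativity; each of $(\1\sqcup\1\to\2)\htimes(\mathbf{P}\to\2)$ and $(\mathbf{P}\to\2)\htimes(\mathbf{P}\to\2)$ is an isomorphism, because the relation imposed by a ``quotient'' generator propagates across the cartesian product; and each pushout-product involving $\1\to\iso$ is, up to isomorphism, $\1\to\iso$ itself, an identity, or a single pushout of $\1\to\iso$, the invertible arrows of $\iso$ letting the relevant relations propagate. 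In every case the pushout-product of generators is a relative $\I$-cell complex (in the $\I\htimes\I$ cases) or a relative $\J$-cell complex (in the mixed cases) — trivially so except for the one genuine new cell in $(\1\sqcup\1\to\2)\htimes(\1\sqcup\1\to\2)$ — hence cellular.

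Equipping each pushout-product with the coalgebra structure read off from its cell presentation, and choosing that of $i\htimes i'$ to be the transpose along the symmetry isomorphism of the one chosen for $i'\htimes i$ (legitimate since the computation above is visibly symmetric, and the lone new cell of $(\1\sqcup\1\to\2)\htimes(\1\sqcup\1\to\2)$ is invariant under interchanging the two composites of the square), Theorem \ref{existthm} yields the three two-variable adjunctions of awfs, Theorem \ref{uniquethm} shows they are the unique ones extending the chosen cell data, and the symmetric choice forces the $(\C,\F_t)\times(\C,\F_t)\to(\C,\F_t)$ one to commute with the symmetry isomorphism. Maximal coherence then follows because, by Theorems \ref{cgadjthm} and \ref{universalSOAthm}, its two defining squares may be verified after restriction to the generating categories, and there every pushout-product in play is an identity — which carries a unique coalgebra structure for any comonad — or the generator $\1\to\iso$ with its tautological structure, so the two assigned (co)algebra structures coincide. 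The main obstacle is nothing conceptual but the careful bookkeeping of these finite pushout-product computations in $\Cat$ — in particular confirming that the relations really do propagate, so that $(\mathbf{P}\to\2)\htimes(\mathbf{P}\to\2)$ and the $\iso$-pushout-products are isomorphisms; granting those combinatorial facts, the machinery of \S\S\ref{cellsec}--\ref{algQuillsec} completes the argument formally.
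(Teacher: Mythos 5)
Most of your outline tracks the paper's strategy (build the algebraic structure from $\I$ and $\J$, then feed cellular structures on pushout-products of generators into Theorems \ref{existthm} and \ref{uniquethm}), and your computations of the generator pushout-products are essentially right. But the maximal coherence step contains a genuine gap. You claim that, after restricting the coherence squares to generators, ``every pushout-product in play is an identity \ldots{} or the generator $\1\to\iso$.'' That is false for the crucial square, the one indexed by $\J\times\J$: there the only object is $(j,j)$ with $j\colon\1\to\iso$, and $j\htimes j$ is the inclusion of the three-object chaotic category $\iso\sqcup_{\1}\iso$ into $\iso\times\iso$ --- as you yourself computed, a single pushout of $j$, but neither an identity nor isomorphic to $j$. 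So your stated reason (``unique coalgebra structure for any comonad'') does not apply, and nothing you have said shows that the $\C_t$-coalgebra structure assigned to $j\htimes j$ by the lifted functor $\Ctalg\times\Calg\to\Ctalg$ agrees with the one assigned by $\Calg\times\Ctalg\to\Ctalg$. Your symmetric choice of generator data only guarantees that the two structures are interchanged by the swap automorphism of $j\htimes j$, not that they are equal; and $\C_t$-coalgebra (i.e.\ $\J$-cellular) structures are not obviously unique. This is exactly the delicate point: the paper's next example shows that for simplicial sets the analogous coherence square \emph{fails}, with the pushout-product of two generating trivial cofibrations receiving two genuinely different $\C_t$-structures, so it cannot be waved through. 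The paper closes this square by observing that the two composites are two-variable adjunctions of awfs built symmetrically from a single generator and then invoking Theorem \ref{uniquethm}; some argument of this kind (or a direct check of invariance of the chosen structure under the swap) is needed and is absent from your proposal.

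A secondary omission, which also affects your first coherence square at the objects $(j,c)$ and $(c,j)$ where the pushout-product is $j$ rather than an identity: you never establish the fact, central to the paper's proof, that every cofibration (injective-on-objects functor) in $\Cat$ admits a \emph{unique} $\C$-coalgebra structure. The paper gets this from the explicit mapping-cylinder description of the comonad $\C$, and it is what makes $\I$-cellularity of $\I\htimes\I$ and all coherence squares with target $\Calg$ automatic; ``with its tautological structure'' is not a substitute for this uniqueness. With that lemma added and the $\J\times\J$ square argued via symmetry plus Theorem \ref{uniquethm}, your argument would match the paper's.
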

\begin{proof}
The folk model structure on $\Cat$ is generated by the following sets of functors \[ \I = \left\{ \raisebox{20pt}{\xymatrix{ \emptyset \ar@{|->}[d]_c \\ \bullet }},  \raisebox{20pt}{\xymatrix@C=5pt{ \bullet &  \ar@{ |->}[d]_d & \bullet  \\ \bullet \ar[rr] & {~} &  \bullet }} , \raisebox{20pt}{\xymatrix@C=5pt{ \bullet \ar@<.5ex>[rr] \ar@<-.5ex>[rr] & \ar@{ |->}[d]_e & \bullet \\ \bullet \ar[rr] & {~} & \bullet }}  \right\} \quad\quad \J = \left\{ \raisebox{20pt}{\xymatrix@C=15pt{ & \bullet \ar@{|->}[d]_{j} \\ \bullet \ar@<.5ex>[r] & \bullet \ar@<.5ex>[l] }}\right\} \] Write $\iso$ for the codomain of $j$, that is, the free-standing isomorphism

By Theorem \ref{amsexistcor}, $\Cat$ has an algebraic model structure generated by $\I$ and $\J$ if and only if $j$ is $\I$-cellular, i.e., if and only if there is a functor $\J \to \Calg$, where $\C$ is the comonad of the awfs generated by $\I$. The comonad $\C$ is particularly simple to describe. By a dimension argument, it can be constructed by running Garner's small object argument first using the generator $c$, then using $d$, and then using $e$. Each process converges after a single step, which means that the comonad $\C$ is constructed in three steps: each of which forms a single pushout of the coproduct over lifting problems against the generator in question.  See \cite[\S 4]{garnerunderstanding} or \S I.2.5 for more details about the small object argument.

The resulting functorial factorization is equivalent to the usual mapping cylinder construction: $$\xymatrix{ & A \ar[r]^f \ar[d]_{i_1} \ar@{}[dr]|(.8){\ulcorner}& B \ar[d] \\ A \ar[r]_-{i_0} & A \times \iso \ar[r] & A \times \iso \coprod_A B}$$ Concretely, $A \times \iso \coprod_A B$ is the unique category with objects $A_0 \coprod B_0$ such that the functor $f\sqcup\id$ to $B$ is fully faithful, and hence a trivial fibration. The bottom composite above is used to define the functorial factorization
\[A \sr{f}{\lra} B \hspace{.5cm} \mapsto \hspace{.5cm} \xymatrix{A \ar[rr]^-{Cf:=i_0} & & A \times \iso \coprod_A B \ar[rr]^-{F_tf:=f\sqcup\id} & &  B}\] Here $A$ does not necessarily inject into the mapping cylinder, because arrows in $A$ that become equal in $B$ get identified, but it is injective on objects; hence $i_0$ is a cofibration.

On morphisms, the functor $C : \Cat^{\2} \to \Cat^{\2}$ sends \[\raisebox{.25in}{\xymatrix{ A \ar[r]^u \ar[d]_f & A' \ar[d]^{f'} \\ B \ar[r]_v & B'}} \hspace{.5cm} \text{to} \raisebox{.25in}{\xymatrix{A \ar[r]^u \ar[d]_{i_0} & A' \ar[d]^{i_0} \\ A \times \iso \coprod_A B \ar[r]_-{u \times \id \sqcup v} & A' \times \iso \coprod_{A'} B'}}\] The counit and comultiplication natural transformations have components \[\vec{\e_f} = \raisebox{.25in}{\xymatrix{ A \ar[d]_{i_0} \ar@{=}[r] & A \ar[d]^f \\ A \times \iso \coprod_A B \ar[r]_-{f\times \id\sqcup\id} & B}} \qquad \vec{\d_f} = \raisebox{.25in}{\xymatrix{ A \ar@{=}[r] \ar[d]_{i_0} & A \ar[d]^{i_0} \\ A \times \iso \coprod_A B \ar[r]_-{i_0\sqcup\id} & A \times \iso \coprod_A A \times \iso \coprod_A B}}\] In particular, $\d_f$ includes $A \times \iso$ into the first copy of this object in the triple pushout; the second copy is not in the image of this map.

Every cofibration in $\Cat$ admits a unique $\C$-coalgebra structure: if $f$ is injective on objects, there is a unique arrow from its codomain to the mapping cylinder so that \[\xymatrix{ A \ar[d]_f \ar[r]^-{i_0} & A \times \iso \coprod_A B \ar[d]^{f\sqcup\id} \\ B \ar@{=}[r] \ar@{-->}[ur] & B} \] commutes. Objects $b \in B$ of the form $b=f(a)$ necessarily map to $(a,0) \in A \times \iso$ while objects not in the image of $A$ necessarily map to themselves in $B$. Because $f\sqcup\id$ is full and faithful, this object map determines the section $B \to A \times \iso \coprod_A B$ on morphisms.  It is easy to check that this lift makes $f$ a $\C$-coalgebra. In particular, the cofibration $j$ is automatically $\I$-cellular, and $\I$ and $\J$ give $\Cat$ an algebraic model structure. 

To show that it is symmetric monoidal, we apply Theorem \ref{existthm} and examine pushout-products of generating (trivial) cofibrations.
 $\I$-cellularity is automatic from the fact that $\Cat$ is a monoidal model category in the ordinary sense \cite{lackhomotopy}, so we must only check that the pushout-product of elements of $\I$ with elements of $\J$ is $\J$-cellular. By an easy computation \[c \htimes j = j \quad \mathrm{and} \quad d \htimes j = e \htimes j = \id_{\2 \times \iso}.\] The first of these has a canonical and the second a unique $\C_t$-coalgebra structure. This defines \[-\htimes- \colon \Calg \times \Ctalg \to \Ctalg. \]

Because each cofibration has a unique $\C$-coalgebra structure, squares with terminal vertex $\Calg$ automatically commute. Because the functors $\Calg \times \Ctalg \to \Ctalg$ and $\Ctalg \times \Calg \to \Ctalg$ are defined symmetrically and $\J$ consists of a single generator, we can apply Theorem \ref{uniquethm} to conclude that \[\xymatrix{ \Ctalg \times \Ctalg \ar[d] \ar[r] & \Ctalg \times \Calg \ar[d] \\ \Calg \times \Ctalg \ar[r] & \Ctalg}\] commutes, proving that the symmetric monoidal algebraic model structure is maximally coherent.
\end{proof}

\begin{thm}\label{ssetmonalgthm} Quillen's original model structure on simplicial sets is a monoidal algebraic model structure with the usual generating (trivial) cofibrations.
\end{thm}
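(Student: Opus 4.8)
The plan is to apply the cellularity and uniqueness theorems exactly as in the proof of Theorem \ref{folkthm}, so the work splits into two parts: exhibiting an algebraic model structure on $\sSet$ with the standard generators (which is Example \ref{ex:ssets}), and verifying the hypotheses of Theorem \ref{existthm} for the pushout-products of generators under the cartesian product $\times \colon \sSet \times \sSet \to \sSet$. Since the cartesian (equivalently, Day-convolution-free) closed structure on $\sSet$ is the relevant monoidal structure, with $\hom = \homl = \homr$ the internal hom $(-)^{(-)}$, condition (ii) of Definition \ref{monamsdefn} is immediate because the monoidal unit $\Delta^0$ is already cofibrant — every simplicial set is cofibrant — so Ken Brown's lemma applies as noted in the remark after that definition. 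Thus the entire content is that $(\times, \hom, \hom)$ is an algebraic Quillen two-variable adjunction, and by the corollary at the end of \S\ref{algQuillsec} it suffices to check that $\I \htimes \I$ is $\L$-cellular (where $(\L,\R)$ is the awfs generated by the boundary inclusions $\partial\Delta^n \hookrightarrow \Delta^n$, i.e.\ the (cofibration, trivial fibration) awfs), that $\J \htimes \I$ and $\I \htimes \J$ are $\L_t$-cellular (for the (trivial cofibration, fibration) awfs generated by horn inclusions), and finally — for the coherence diagrams — that the two canonical assignments of coalgebra structures to $\J \htimes \J$ agree.

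First I would dispense with $\I \htimes \I$: every monomorphism of simplicial sets admits a $\C$-coalgebra structure (this is the content of Example \ref{Iex} combined with the standard skeletal filtration), so \emph{a fortiori} the pushout-product of two boundary inclusions, being a monomorphism, is $\L$-cellular; concretely one reads off the coalgebra structure from the skeletal (attaching-by-dimension) filtration, and this assignment is automatically natural and visibly symmetric. The same argument handles $\J \htimes \I$ and $\I \htimes \J$ at the level of \emph{being cofibrations} — but here the subtlety is that we need coalgebra structures for the \emph{trivial cofibration} awfs $(\L_t,\R)$, not merely the cofibration awfs. So the real step is: the pushout-product of a horn inclusion with a boundary inclusion is an anodyne map, and moreover it is \emph{$\J$-cellular}, i.e.\ built as a transfinite composite of pushouts of horns, naturally in the generators. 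This is the classical fact (going back to Gabriel–Zisman / the Gabriel–Zisman lemma, and recorded e.g.\ in Joyal–Tierney or in \cite{garnerunderstanding}) that $\Lambda^n_k \times \Delta^m \cup \Delta^n \times \partial\Delta^m \hookrightarrow \Delta^n \times \Delta^m$ admits an explicit decomposition into pushouts of horn inclusions, obtained by the shuffle/staircase filtration of the prism. What I would emphasize is that this decomposition is \emph{canonical} — it does not require choices — so it lifts to a functor $\J \times \I \to \Ltalg$ (and similarly $\I \times \J \to \Ltalg$) over $\sSet^\2$, which is precisely the hypothesis of Theorem \ref{existthm}. Invoking that theorem for each of the three pushout-product bifunctors produces the three two-variable adjunctions of awfs required by Definition \ref{algquilltwovardefn}.

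The last step is the coherence diagrams of the corollary at the end of \S\ref{algQuillsec}, equivalently \eqref{algbiquill}: one must check that the square with terminal vertex $\Lalg$ commutes (two ways of giving an $\L$-coalgebra structure to $\J \times \J$, or to $\I \times \J$, etc.) and the square with terminal vertex $\Ltalg$ commutes. The first is automatic for the same reason as in Theorem \ref{folkthm}: every monomorphism has a \emph{unique} $\C$-coalgebra structure? — no, that fails in $\sSet$, unlike in $\Cat$; a monomorphism of simplicial sets generally admits many $\C$-coalgebra structures (corresponding to the many ways of ordering attachments). So I cannot appeal to uniqueness the way the $\Cat$ proof does, and \textbf{this is the step I expect to be the main obstacle}. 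The resolution is that although $\C$-coalgebra structures are not unique, the relevant canonical ones — those arising from the skeletal filtration and from the shuffle decomposition — are \emph{pinned down} by the explicit construction, and one checks by direct (if tedious) inspection that the coalgebra structure on $\partial\Delta^m \times \Delta^n \cup \cdots$ obtained via $\xi^\sSet$ from the $\L_t$-coalgebra structure agrees with the one obtained from the $\C$-coalgebra structure on the boundary factor; equivalently one invokes Theorem \ref{uniquethm} \emph{not} to get uniqueness of coalgebra structures on objects but uniqueness of the lifted \emph{bifunctor} extending a given assignment on generators, and notes that both composites in \eqref{algbiquill} restrict to the \emph{same} functor $\I \times \I \to \Lalg$ (resp.\ $\J \times \I \to \Ltalg$) on generators — namely the one dictated by the shuffle filtration — whence they coincide. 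Thus all three two-variable adjunctions of awfs are maximally coherent, and symmetry of the whole structure follows as in Theorem \ref{folkthm} from the manifest symmetry of the shuffle decomposition together with Theorem \ref{uniquethm} applied to the single-generator situation, completing the proof.
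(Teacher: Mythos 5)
Your core argument for the required content is essentially the paper's: condition (ii) of Definition \ref{monamsdefn} is automatic because the unit is cofibrant; $\I\htimes\I$ is handled by the fact that every monomorphism of simplicial sets is cellular; and the real work is the $\J$-cellularity (not mere anodyneness) of the pushout-products $\J\htimes\I$, which both you and the paper obtain from the explicit shuffle/prism filtration that factors $\Lambda^n_k\times\Delta^m\cup\Delta^n\times\partial\Delta^m\to\Delta^n\times\Delta^m$ as a composite of pushouts of horn inclusions (the paper extracts this from the inductive step of May's proof that $X^A$ is Kan, precisely because the standard anodyne-extension arguments only place these maps in the saturated class, which permits retracts; your citation should be read with that caveat, but the mathematical substance is the same). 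With those lifts in hand, Theorem \ref{existthm} produces the three two-variable adjunctions of awfs, and that already completes the proof, since Definition \ref{algquilltwovardefn} does not require maximal coherence and the corollary at the end of \S\ref{algQuillsec} lists the coherence squares only as the criterion for the \emph{additional} property of maximal coherence.

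Your final step, however, contains a genuine error. First, the premise you flag as ``the main obstacle'' is backwards: for this comonad $\C$, a coalgebra structure is an $\I$-cellular structure filtered by attaching degree, and for a monomorphism of simplicial sets this is forced by the dimension filtration, so $\C$-coalgebra structures are \emph{unique} (this is exactly what makes the squares landing in $\Calg$ commute). Second, and more seriously, your conclusion that the structure is maximally coherent is false, and your argument for it---that both composites in \eqref{algbiquill} restrict to ``the same'' functor on generators dictated by the shuffle filtration---fails at precisely the point where the paper exhibits a counterexample: for $h^1_0\htimes h^2_1$ the two canonical $\C_t$-coalgebra (i.e.\ $\J$-cellular) structures, obtained by regarding one or the other generating trivial cofibration as the algebraic cofibration, genuinely differ (one fills the open end of the trough with a $2$-dimensional horn early on, the other is forced to fill it last with a $3$-dimensional horn), so the two lifted functors do \emph{not} agree even on $\J\times\J$; Theorem \ref{uniquethm} cannot be invoked because its hypothesis---agreement of the restrictions to generators---is exactly what fails. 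The fix is simply to delete this step: the theorem asserts a monoidal algebraic model structure, which requires only the three two-variable adjunctions of awfs you have already constructed, and the resulting structure is (as the paper shows) mostly but not maximally coherent.
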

\begin{proof}
It is well-known that simplicial sets form a symmetric monoidal model category generated by the usual sets $\I$ and $\J$ of sphere and horn inclusions. As with $\Cat$, a dimension argument can be used to give an inductive description of the comonad $\C$ in such a way that it is clear that all cofibrations admit unique $\C$-coalgebra structures. In particular, the generators $\J$ are $\I$-cellular, as described explicitly in Example \ref{ex:ssets}, defining an algebraic model structure. 

Because all cofibrations are uniquely cellular, to show that the cartesian product forms an algebraic Quillen two-variable adjunction, we need only worry about the algebraic trivial cofibrations. Here the usual theory of anodyne extensions, a key component of the proof that simplicial sets is a monoidal model category in the non-algebraic sense, is not quite strong enough: the modern proofs show that elements of the pushout-product $\J \htimes \I$ are trivial cofibrations but don't prove that they are $\J$-cellular, that is, that they can be factored as composites of pushouts of the generating horn inclusions $\J$. By Theorem \ref{existthm}, this stronger statement is needed to complete the proof.

However, the classical elementary proof, found for instance in \cite[Theorem 6.9]{maysimplicial}, that if $X$ is a Kan complex and $A$ any simplicial set then the hom-object $X^A$ is Kan shows precisely this. In that argument, $X$ is implicitly regarded as an algebraic fibrant object. Using the implicitly chosen fillers for all horns in $X$, that proof constructs solutions to lifting problems \begin{equation}\label{kanhomlift}\xymatrix{ \Lambda^n_k \ar[d] \ar[r] & X^A \\ \Delta^n \ar@{-->}[ur]}\end{equation} via a combinatorial analysis of the data described by the horn in $X^A$. The given construction has the following architecture. By the calculus of parameterized adjunctions, the lifting problem (\ref{kanhomlift}) corresponds to a lifting problem \[\xymatrix{ & X^{\Delta^n} \ar[d] \\ A \ar[r] \ar@{-->}[ur] & X^{\Lambda^n_k}}\] the solution to which is constructed inductively through an analysis of $(n,m)$-\emph{shuffles}: for each simplex $a\colon \Delta^m \to A$, its image in $X^{\Lambda^n_k}$ is lifted to $X^{\Delta^n}$ by filling an explicit sequence of horns in $X$. Importantly, these lifts are chosen so as to be compatible with previously-specified lifts for all $(m-1)$-simplices of $A$. In other words, the proof inductively specifies solutions to lifting problems \begin{equation}\label{kaninductlift}\xymatrix{ \partial \Delta^m \ar[rr] \ar[d] & & X^{\Delta^n} \ar[d] & \ar@{}[d]|{\leftrightsquigarrow} & \Lambda^n_k \times \Delta^m \displaystyle\po_{\Lambda^n_k \times \partial \Delta^m} \Delta^n \times \partial\Delta^m \ar[d] \ar[r] & X \\ \Delta^m \ar[r] \ar@{-->}[urr] & A \ar[r] & X^{\Lambda^n_k}& & \Delta^n \times \Delta^m \ar@{-->}[ur]} \end{equation} by factoring the displayed element of $\J \htimes \I$ as a composite of pushouts of elements of $\J$. Thus, we see that the inductive step of the proof of \cite[Theorem 6.9]{maysimplicial}, constructing a chosen solution to the lifting problem (\ref{kaninductlift}), establishes the $\J$-cellularity of the maps $\J \htimes \I$.
\end{proof}

This monoidal algebraic model structure on simplicial sets is \emph{mostly} but not \emph{maximally} coherent. It is instructive to see why. It is mostly coherent because all monomorphisms of simplicial sets have a unique $\C$-coalgebra structure, so the lifted functors with codomain $\Calg$ commute because the functors they are lifting commute. 

However, the pushout-product of a pair of generating trivial cofibrations is assigned two different $\C_t$-coalgebra structures, depending on which generator is regarded as a $\C$-coalgebra. We illustrate with an example. Write $h^1_0 \colon \Lambda^1_0 \to \Delta^1$ and $h^2_1 \colon \Lambda^2_1 \to \Delta^2$ for the inclusions of 1- and 2-dimensional horns missing the 0th and 1st faces, respectively. The pushout-product $h^1_0 \htimes h^2_1$ has codomain the solid cylinder $\Delta^1 \times \Delta^2$ and domain a hollow ``trough'' with one of the end triangles and the top square $\Delta^1 \times \Delta^1$ missing. 
\begin{equation}\label{trough}\xymatrix@R=5pt@C=5pt{ \bullet \ar@{}[rrrrd]|{=} \ar@{}[rrrrdd]_(.6){=}  \ar[rrrr] \ar[ddrr]  \ar[dddrrrrrrrr] & & & & \bullet\ar[dddrrrrrrrr]  \ar@{}[ddr]|{=} \\   & & & & \\ & & \bullet \ar[uurr]|(.4){\hole} \ar[dddrrrrrrrr] \ar[rrrrrrd] \ar[rrrrrrrrrrd]|(.1){\hole}|(.2){\hole}|(.3){\hole}|(.4){\hole} &&&& & &  \\ & & & & &  & & & \bullet \ar[ddrr] \ar@{}[ddll]|(.3){=}  \ar@{}[drrrr]|{=}& & & & \bullet \\ & & & & &&&&&&&& & \\ & &&  & & & & & & & \bullet \ar[uurr] & &  }\end{equation}

For simplicial sets, $\C$-coalgebra structures are precisely $\I$-cellular structures, that is, factorizations of a given monomorphism into pushouts of coproducts of elements of $\I$ filtered by attaching degree. The $\I$-cellular structure assigned the horn inclusion $h^n_k$ is given by the factorization \begin{equation}\label{hornicell} \xymatrix{ \Lambda^n_k \ar[r] & \partial \Delta^n \ar[r] & \Delta^n}\end{equation} The first map is a pushout of $\partial \Delta^{n-1} \to \Delta^{n-1}$ and attaches the ``missing face'' to the horn; the second map fills the resulting sphere. 

The following general lemma, stated using the notation relevant to this example, will facilitate our computation. This is an application of the converse of the composition criterion of Theorem \ref{twovarcompthm}.

\begin{lem}\label{polem} Given $i \colon A \to B \in \Ctalg$ and $j \colon J \to K, k \colon K \to L \in \Calg$, the lifted functor $-\htimes-\colon \Ctalg \times \Calg \to \Ctalg$ of a two-variable adjunction of awfs assigns $i \htimes (kj)$ the $\C_t$-coalgebra structure obtained by composing the displayed pushout of the $\C_t$-coalgebra $i \htimes j$ with the $\C_t$-coalgebra $i \htimes k$. \begin{equation}\label{polemeq}\xymatrix@C=30pt{ A \times K \displaystyle\po_{A \times J} B \times J \ar[d]_{i \htimes j} \ar@{}[dr]|(.8){\ulcorner} \ar[r]^{A \times k \sqcup 1} & A \times L \displaystyle\po_{A \times J} B \times J \ar@/^/[ddr]^{i \htimes (kj)} \ar[d]_{p=1 \sqcup_{A \times j} B \times j}  \\ B \times K \ar[r]^-{\iota} \ar@/_/[drr]_{B \times k} & A \times L \displaystyle\po_{A \times K} B \times K \ar[dr]^{i \htimes k} \\ & & B \times L}\end{equation}
\end{lem}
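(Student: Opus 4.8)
The plan is to recognize that $kj \in \Calg$ carries the composite $\C$-coalgebra structure $t \bullet s$ of \eqref{coalgcomp}, and then to apply the composition criterion of Theorem \ref{twovarcompthm} directly. The key observation is that the lifted functor $-\htimes- \colon \Ctalg \times \Calg \to \Ctalg$, fixing $i \in \Ctalg$, is precisely the lifted functor $i \htimes - \colon \Calg \to \Ctalg$ of an adjunction of awfs $(i \htimes -, \hhomr(-,i)) \colon (\C,\F) \to (\C_t, \F)$ in the sense of Definition \ref{newadjawfsdefn}; this is exactly the situation analyzed in the second half of the proof of Theorem \ref{existthm}. The dual of Theorem \ref{compcritthm} then tells us exactly how the $\C_t$-coalgebra structure assigned to $i \htimes (kj)$ decomposes in terms of the structures on $i \htimes j$ and $i \htimes k$: it is obtained by first forming the lifting problem displayed in (the dual of) \eqref{crazycomp} against $i \htimes k$, solving it, and then solving the induced lifting problem against $i \htimes j$.

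Concretely, first I would note that the square labelled with a pushout corner in \eqref{polemeq} is genuinely a pushout: it is a pushout of the monomorphism $A \times j \to B \times j$ (equivalently, of $i \htimes -$ applied to nothing new — it is the standard ``pushout-product of a composite'' decomposition), so the arrow $p$ is well-defined and the map $i \htimes (kj)$ factors as $(i \htimes k) \circ p$ through the lower pushout. This identity at the level of underlying arrows is the non-algebraic content, and is a routine diagram chase with the universal properties of the two pushouts; it is essentially the bottom-left portion of the big diagram appearing in the proof of Theorem \ref{existthm} (the one relating $i \hast (kj)$, $i \hast k$, and $i \hast j$).

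The second and main step is to check that the $\C_t$-coalgebra structure the lifted functor assigns to $i \htimes (kj)$ really is the composite of the $\C_t$-coalgebra $p_*(i \htimes j)$ (the pushout of $i \htimes j$ along $A \times k \sqcup 1$, which is canonically a $\C_t$-coalgebra since $\Ctalg \to \M^\2$ creates colimits) with the $\C_t$-coalgebra $i \htimes k$, using the vertical composition law \eqref{coalgcomp}. This is where the composition criterion does the work: the criterion, applied in the $\Calg$-variable, says precisely that the chosen solutions to lifting problems against $i \htimes (kj)$ are computed by first using the solution against $i \htimes k$ and then the solution against $i \htimes j$ — and by Lemma \ref{laxmorlem} (or rather its dual, via $\Coalg$), a lifted functor that respects composition in this way sends $kj$ with its composite coalgebra structure to $i \htimes (kj)$ with the corresponding composite coalgebra structure. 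Equivalently, one can cite the displayed computation in the last part of the proof of Theorem \ref{existthm}, which carried out exactly this verification: there it was shown, by naturality of $\l(i)$, associativity of the $\R$-algebra structure, naturality of $\mu$, the monad pentagon for $\l(i)$, and the definition of $t \bullet s$, that the two a priori different chosen solutions agree. The present lemma is simply the coalgebra-side restatement of that fact, so I would phrase the proof as: ``This is precisely the content of the converse direction of Theorem \ref{twovarcompthm}, dualized; the verification is the computation appearing at the end of the proof of Theorem \ref{existthm}, read off at the level of coalgebra structures rather than chosen lifts.''

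The main obstacle is bookkeeping rather than conceptual: one must be careful that the composite $\C_t$-coalgebra structure $(i\htimes k)\bullet p_*(i\htimes j)$ produced by \eqref{coalgcomp} is literally the one the lifted bifunctor assigns, and not merely one that induces the same lifting function — but since $\Coalg(\C_t)$ determines the awfs (Lemma \ref{awfscharlem}) and the lift functor of \eqref{eq:liftfunctor} is faithful on the relevant data, agreement of chosen solutions to all lifting problems forces agreement of coalgebra structures. I expect no genuine difficulty beyond threading the adjunctions $i \htimes - \dashv \hhomr(-,i)$ and $B \times - \dashv \homr(-,B)$ through the mates correspondence, exactly as set up in \S\ref{arisingssec}.
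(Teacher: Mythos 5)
Your proposal follows essentially the same route as the paper's proof: check the pushout factorization of $i \htimes (kj)$, then invoke the converse direction of the composition criterion (Theorem \ref{twovarcompthm}, dualized) for the lifted functor $i \htimes -$ to identify the assigned $\C_t$-coalgebra structure with the composite $(i\htimes k)\bullet p_*(i\htimes j)$; the explicit computation you defer to (exhibiting that composite structure as the canonical solution demanded by the criterion) is exactly how the paper closes the argument. Only cosmetic slips: the right adjoint of $i\htimes -$ is $\hhoml(i,-)$ rather than $\hhomr(-,i)$, the two-step lift proceeds first through (the pushout of) $i\htimes j$ and then through $i\htimes k$, and Lemma \ref{laxmorlem} does not literally apply since $i\htimes-$ fails to preserve composability of vertical arrows---but your fallback observation that coalgebra structures are recovered from their lifting functions (by lifting against the free algebra) covers that step correctly.
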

\begin{proof}
It is straightforward to check that (\ref{polemeq}) makes sense, i.e., that the square is a pushout and gives the described factorization of $i \htimes (kj)$. We compute the canonical $\C_t$-coalgebra structure assigned $i \htimes (kj)$ as the composite of these maps and show that it agrees with that assigned $i \htimes(kj)$ by the composition criterion. Write $p$ for the pushout of $i \htimes j$, and write $z_j, z_k, z_p$ respectively for the $\C_t$-coalgebra structures assigned to $i \htimes j$, $i \htimes k$, and $p$. Because $p$ is assigned the coalgebra structure of a pushout, $z_p$ equals \[\xymatrix{ A \times L \displaystyle\po_{A \times K} B \times K \cong \left( A \times L \displaystyle\po_{A \times J} B \times J \right) \displaystyle\po_{{\sim}} B \times K \ar[rrr]^-{C_t p \sqcup R(A \times k \sqcup 1,\iota) \cdot z_j } &  & & Rp.}\] The coalgebra structure assigned the composite is \begin{equation}\label{tentativething}\xymatrix{ B \times L \ar[r]^{z_k} & R(i \htimes k)  \ar[rr]^-{R(R(1,i \htimes k)\cdot z_p),1)} &&  RF(i \htimes (kj)) \ar[r]^{\mu_{i \htimes (kj)}} & R(i \htimes (kj))} \end{equation} By definition, $R(1, i \htimes k) \cdot z_p$ is the top arrow of the lifting problem 
\[\xymatrix{ A \times L \displaystyle\po_{A \times K} B \times K \ar[d]_{i \htimes k} \ar[rrrr]^-{C_t(i \htimes (kj)) \sqcup R(A \times k \sqcup 1, B \times k) \cdot z_j} &&&& R(i \htimes (kj)) \ar[d]^{F(i \htimes (kj))} \\ B \times L \ar@{=}[rrrr] & &&& B \times L}\]
whose canonical solution is the composite (\ref{tentativething}). But this is precisely what is required by the composition criterion of Theorem \ref{compcritthm}, which holds for the lifted functor $i \htimes -$ obtained from a two-variable adjunction of awfs.
\end{proof}

By a similar dimension argument, $\C_t$-coalgebra structures on $\sSet$ are $\J$-cellular structures, that is sequences of monomorphisms which attach fillers for all previously unexamined horns. We use this intuition and the above lemma to compute the coalgebra structures assigned to $h^1_0 \htimes h^2_1$ by the two lifted functors. 

We first apply Lemma \ref{polem} to the $\I$-cellular decomposition (\ref{hornicell}) of $h^1_0$. The pushout-product of $h^2_1$ with the inclusion $\emptyset \to \Delta^1$ is simply $h^1_0$. Hence, the $\J$-coalgebra structure assigned its pushouts, including in particular the first factor of $h^1_0 \htimes h^2_1$ defined in Lemma \ref{polem}, first fills the $\Lambda^2_1$-horn on the front edges of (\ref{trough}) to obtain a ``trough,'' before filling the ``trough'' in the way specified by the lifted functor $\I \times \J \to \Ctalg$. 

On the other hand, the pushout-product of $h^1_0$ with $\partial\Delta^1 \to \Delta^1$ is the monomorphism
\[\xymatrix@R=5pt@C=5pt{ \bullet \ar[dd] \ar[rr]  & & \bullet  \\  \\ \bullet \ar[rr]  &   \ar@{ |-> }[dd] &  \bullet \\  \\ \bullet \ar[ddrr] \ar[dd] \ar@{}[ddr]|{=} \ar[rr] \ar@{}[rrd]|{=} & & \bullet \ar[dd] \\ & & {}  \\ \bullet \ar[rr] & & \bullet  }\] This map has $\J$-cellular structure given by first filling the $\Lambda^2_1$-horn formed by the right and bottom edges and then filling the resulting $\Lambda^2_0$-horn formed by the top edge and the diagonal. Pushouts of this map inherit a similar $\J$-cellular structure. In particular the $\J$-cellular structure assigned $h^1_0 \htimes h^2_1$ by this method first fills the top of the trough (\ref{trough}), at which point it must fill the end triangle very last, using a 3-dimensional horn, not a 2-dimensional one. So this $\C_t$-coalgebra structure can't possibly agree with the one assigned via the other lifted functor. 

\begin{rmk}
There might be multiple ways to define a (cartesian) monoidal algebraic model structure on simplicial sets extending the standard algebraic model structure generated by $\I$ and $\J$. But even if the lifted functor $\I \times \J \to \Ctalg$ were defined by a different procedure than the one described in the proof of Theorem \ref{ssetmonalgthm}, there are no other ways to make these low-dimensional pushout products $\J$-cellular. Hence no monoidal algebraic model structure generated by  $\I$ and $\J$ will be maximally coherent.
\end{rmk}

\begin{rmk}
We expect this sort of argument to apply to many situations, which is why we did not require monoidal algebraic model structures to be maximally coherent.
\end{rmk}

If $(\M, \times, *)$ is a closed monoidal category such that the monoidal unit is terminal, then there is a monoidal product $\wedge$ on $\M_*$ defined as follows. Write $\vee$ for the coproduct in $\M_*$. Given $x \colon * \to X, y \colon * \to Y$ in $\M_*$, the pushout \[\xymatrix@C=40pt{X \vee Y \ar[d] \ar[r]^{ (1 \times y)\vee (x \times 1)} \ar@{}[dr]|(.8){\ulcorner} & X \times Y \ar[d] \\ {*} \ar[r] & X \wedge Y}\] defines a bifunctor $-\wedge - \colon \M_* \times \M_* \to \M_*$ that we call the smash product. The monoidal unit is denoted $S^0 = (*)_+ = * \sqcup *$. See \cite[4.2.9]{hoveymodel}.
 
\begin{thm} If $\M$ is a monoidal algebraic model category and the monoidal unit $*$ is terminal and cofibrant, then $\M_*$ is also a monoidal algebraic model category, symmetric if $\M$ is.
\end{thm}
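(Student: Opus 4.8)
The plan is as follows. First, by the theorem above establishing the pointed algebraic model structure (proved by pulling the awfs of $\M$ back along the disjoint-basepoint--forgetful adjunction $(-)_+\dashv U\colon\M\rightleftarrows\M_*$, using that $\C$ and the monads are domain-preserving), the category $\M_*$ carries an algebraic model structure $\xi_*\colon((\C_t)_*,\F_*)\to(\C_*,(\F_t)_*)$ for which $(-)_+\dashv U$ is an algebraic Quillen adjunction. By Hovey (cited above) $\M_*$ is closed monoidal under the smash product $\wedge$, with unit $S^0=(*)_+$, and symmetric when $\M$ is. It then remains to verify conditions (i) and (ii) of Definition \ref{monamsdefn}. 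Condition (ii) is immediate: $*$ is cofibrant in $\M$ and $(-)_+$ is left Quillen, so $S^0$ is cofibrant in $\M_*$, and (ii) follows from (i) by the remark after Definition \ref{monamsdefn}. So the whole content is (i): that $\wedge$, with its left and right closures, is an algebraic Quillen two-variable adjunction, i.e.\ that the three relevant pushout-products of awfs $-\hsmash-$ on $\M_*$ carry the requisite lifts.

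Second, the engine is the interplay between $(-)_+$ and the monoidal structures. Since $(-)_+$ is strong monoidal from $(\M,\times,*)$ to $(\M_*,\wedge,S^0)$ and preserves pushouts, one gets natural isomorphisms $i_+\hsmash j_+\cong(i\htimes j)_+$ for $i\in\K^\2$, $j\in\M^\2$ (using $X_+\wedge Y_+\cong(X\times Y)_+$); moreover, as the awfs on $\M_*$ are pulled back along the faithful functor $U^\2\colon(\M_*)^\2\to\M^\2$, a $\C_*$-coalgebra (resp.\ $(\C_t)_*$-coalgebra) structure on an arrow of $\M_*$ amounts to a $\C$-coalgebra (resp.\ $\C_t$-coalgebra) structure on its underlying arrow in $\M$, basepoint compatibility being automatic; and $U^\2(f_+)=f\sqcup\id_*$. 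When $\M$ is cofibrantly generated---the case of the intended examples---cofibrant generation transports along $(-)_+\dashv U$, so $\M_*$ is generated by $\I_+$ and $\J_+$, and by the Cellularity Theorem \ref{existthm} it suffices to make $\I_+\hsmash\I_+$ into a $\C_*$-cellular category of arrows and $\J_+\hsmash\I_+,\ \I_+\hsmash\J_+$ into $(\C_t)_*$-cellular ones. But $\M$ monoidal algebraic supplies functors $\I\times\I\to\Calg$ and $\J\times\I,\ \I\times\J\to\Ctalg$ over the appropriate arrow categories; transporting these through $i_+\hsmash j_+\cong(i\htimes j)_+$ and the above identification of coalgebra structures---and observing that $U^\2((i\htimes j)_+)=(i\htimes j)\sqcup\id_*$ inherits the coproduct coalgebra structure assembled from the given one on $i\htimes j$ and the canonical one on the isomorphism $\id_*$, coproducts of coalgebras being coalgebras since the forgetful functors from coalgebra categories create colimits---furnishes exactly these lifts, functorially in every variable. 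The Cellularity Theorem then yields the three two-variable adjunctions of awfs, hence the algebraic Quillen two-variable adjunction. (For a general, not necessarily cofibrantly generated, $\M$ one instead writes down the three lifted bifunctors on $\M_*$ directly and checks the composition criterion of Theorem \ref{twovarcompthm}.)

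Third, for the symmetric statement: when $\M$ is symmetric monoidal algebraic the functor $\I\times\I\to\Calg$ is chosen symmetrically and the pair $\J\times\I,\ \I\times\J\to\Ctalg$ is interchanged by the symmetry of $\times$; transporting to $\M_*$ preserves this. Since $\J_+$ is just the image of $\J$, the Uniqueness Theorem \ref{uniquethm} then forces the lifted functor along $-\hsmash-$ on $\M_*$-cofibrations to commute up to isomorphism with the symmetry of $\wedge$, and to interchange the two trivial-cofibration lifts; hence $\M_*$ is symmetric monoidal algebraic.

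The step I expect to be the main obstacle is the comparison between the \emph{reduced} pushout-product $-\hsmash-$ on $\M_*$ and the cartesian pushout-product $-\htimes-$ on $\M$: for general pointed cofibrations $f,g$, $U^\2(f\hsmash g)$ is not $U^\2f\htimes U^\2g$ but a quotient of it obtained by collapsing the axes, and arranging that coalgebra structures descend along this quotient is precisely where cofibrancy of the unit $*$ must be used (it renders the basepoint-inclusions and the axis maps cofibrations). The cofibrantly generated route sidesteps this entirely, since on generators the identity $i_+\hsmash j_+\cong(i\htimes j)_+$ is exact; what remains there is the lengthy but formal verification that the transported data satisfy the unit, multiplication, and parameterized-mate conditions, all of which follow from the corresponding facts on $\M$ together with the strong monoidality of $(-)_+$.
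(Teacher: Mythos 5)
Your main argument proves a weaker statement than the one asserted: the theorem has no cofibrant generation hypothesis, and the paper's proof works for an arbitrary monoidal algebraic model category $\M$. You relegate the general case to a parenthetical (``write down the three lifted bifunctors directly and check the composition criterion''), but that is precisely the content of the theorem and you never carry it out; moreover your diagnosis of the obstacle there is misplaced. The comparison between $\hsmash$ on $\M_*$ and $\htimes$ on $\M$ is not a quotient along which coalgebra structures must ``descend'' using cofibrancy of $*$. Because $*$ is terminal, the hyper-cube pushout lemma (colimits commuting with colimits) shows that for \emph{arbitrary} pointed arrows $i,j$ the pushout-smash-product $i \hsmash j$ is a pushout, in the arrow category, of the pushout-product $i \htimes j$ of the underlying arrows. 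Since $(\C_t)_*$- and $\C_*$-coalgebra structures on based maps are exactly $\C_t$- and $\C$-coalgebra structures on the underlying maps, and since coalgebra structures push forward along pushouts (the forgetful functors from the coalgebra categories create colimits), one simply assigns $i \hsmash j$ the coalgebra structure obtained by cobase change from the structure that the given two-variable adjunction of awfs on $\M$ assigns to $i \htimes j$; the composition criterion of Theorem \ref{twovarcompthm} is then inherited from $\M$ via the universal property of these pushouts. Cofibrancy of $*$ enters only to make the unit $S^0$ cofibrant, i.e.\ for condition (ii) of Definition \ref{monamsdefn}; it plays no role in comparing $\hsmash$ with $\htimes$.

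Even within your cofibrantly generated route there is an unproved step: ``cofibrant generation transports along $(-)_+\dashv U$'' is an assertion about the \emph{algebraic} structure, but the algebraic model structure on $\M_*$ was constructed by pulling the (co)monads back along $U$, not by running Garner's small object argument on $\I_+$ and $\J_+$. Before invoking the Cellularity and Uniqueness Theorems you would have to show that the pulled-back awfs coincide with those generated by $\I_+$ and $\J_+$ (for instance by identifying $(\I_+)^\boxslash$ with the pullback of $\I^\boxslash \cong \Ftalg$ compatibly with vertical composition and appealing to Lemma \ref{awfscharlem}), and also that $\M_*$ permits the small object argument; none of this is needed in the pushout argument. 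Your symmetry argument leans on the Uniqueness Theorem and so is likewise confined to the cofibrantly generated case, whereas in the general argument symmetry (and coherence) passes to $\M_*$ directly because the lifted functors there are determined by those on $\M$ via cobase change, which commutes with the symmetry isomorphisms.
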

\begin{proof} 
By what one might call the ``hyper-cube pushout lemma,'' which is an application of the fact that colimits commute with each other, the top square in the cube below is a pushout.
\begin{equation}\label{pushsmash}{\small \xymatrix@R=10pt@C=20pt{ & (A \vee L) \displaystyle\po_{A \vee K} (B \vee K) \ar'[d]^(.6){1}[dd] \ar[rr] \ar[dl] \ar@{}[dr]|(.7){\ulcorner} & & A \times L \displaystyle\po_{A \times K} B \times K \ar[dl] \ar[dd]^{i \htimes j} \ar@{}[dddl]|(.9){\urcorner}& \\ {*} \displaystyle\po_{{*}} {*} \ar[rr] \ar[dd]_{1} & &A \wedge L \displaystyle\po_{A \wedge K} B \wedge K \ar@{-->}[dd]^(.35){i \hsmash j} & & \\ & B \vee L \ar[dl] \ar'[r][rr] && B \times L \ar[dl] & \\ {*} \ar[rr] & & B \wedge L  & & }}\end{equation} The left and bottom faces are pushouts tautologically and definitionally. It follows that the composite rectangle from the top left edge to the bottom right edge is a pushout, and hence that the right face is a pushout. This says that the pushout-smash-product $i \hsmash j$ is a pushout of the pushout-product $i \htimes j$. 

On account of the pullbacks \[\xymatrix{ (\C_t)_*\text{-}\mathbf{coalg} \ar[d] \ar[r] \ar@{}[dr]|(.2){\lrcorner} & \Ctalg \ar[d] \\ (\M_*)^\2 \ar[r]_{U^\2} & \M^\2}\quad\quad \xymatrix{ \C_*\text{-}\mathbf{coalg} \ar[d] \ar[r] \ar@{}[dr]|(.2){\lrcorner} & \Calg \ar[d] \\ (\M_*)^\2 \ar[r]_{U^\2} & \M^\2}\] $(\C_t)_*$-coalgebra or $\C_*$-coalgebra structures for based maps are given by $\C_t$-coalgebra or $\C$-coalgebra structures for the underlying arrows. Hence, we define, e.g., the lifted functor $-\hsmash - \colon (\C_t)_*\text{-}\mathbf{coalg} \times \C_*\text{-}\mathbf{coalg} \to (\C_t)_*\text{-}\mathbf{coalg}$ by assigning $i \hsmash j$ the $\C_t$-coalgebra structure created by the pushout of the $\C_t$-coalgebra $i \htimes j$.

To see that this defines a two-variable adjunction of awfs, we appeal to Theorem \ref{twovarcompthm} and show that this functor satisfies the composition criterion in both variables. This follows easily from the fact that the coalgebra structures assigned to the pushout-smash-products displayed in the front of the diagram below are determined by the coalgebra structures assigned to the pushout-products displayed at the back. By the universal property of the pushouts, the canonical solutions to lifting problems against the front arrows will behave analogously to those against the back arrows; and these, by hypothesis, satisfy the composition criterion.
\[\xymatrix{ \cdot \ar[dd]_(.25){i \htimes j} \ar[rr] \ar[dr] \ar@{}[dddr]|(.8){\ulcorner} & & \cdot \ar'[d]_{i \htimes kj}[dd] \ar[rr] \ar[dr] \ar@{}[dddr]|(.8){\ulcorner}& & \cdot \ar'[d]_{i \htimes k}[dd] \ar[dr] \ar@{}[dddr]|(.8){\ulcorner} \\ & \cdot \ar[dd]_(.3){i \hsmash j} \ar[rr] & & \cdot \ar[dd]_(.3){i \hsmash kj} \ar[rr] & & \cdot \ar[dd]_(.3){i \hsmash k} \ar@{-->}[rr] & & \cdot \ar[dd]^f \\ \cdot \ar'[r][rr] \ar[dr] & & \cdot \ar@{=}'[r][rr] \ar[dr] & & \cdot \ar[dr] \\ & \cdot \ar[rr] & & \cdot \ar@{=}[rr] & & \cdot \ar[rr] & & \cdot}\]

Because the monoidal unit $*$ is assumed to be cofibrant and $(-)_+$ is left Quillen, the unit $S^0$ for the monoidal structure on $\M_*$ is cofibrant, and the second condition of Definition \ref{monamsdefn} is automatic. It remains only to see that the algebraic Quillen two-variable adjunction is maximally coherent whenever the original monoidal algebraic model structure is. Because the algebraic model structure on $\M_*$ was defined by pullback, the left-hand square of lifted functors commutes.
\[\xymatrix{(\C_t)_*\text{-}\mathbf{coalg} \times \C_*\text{-}\mathbf{coalg} \ar[d]_{\xi_* \times 1} \ar[r]^-{U \times U} & \Ctalg \times \Calg \ar[d]^{\xi \times 1} \ar[r]^-{\htimes} & \Ctalg \ar[d]^{\xi} \\ \C_*\text{-}\mathbf{coalg} \times \C_*\text{-}\mathbf{coalg} \ar[r]^-{U \times U} & \Calg \times \Calg \ar[r]^-{\htimes} & \Calg }\]
The right-hand square commutes by hypothesis. At each pair of coalgebras in $(\M_*)^\2$, the $(\C_t)_*$-coalgebra structure assigned their pushout-smash-product is determined by the $\C_t$-coalgebra structure assigned the pushout of the arrow in their image along the top row of this diagram; its $\C_*$-coalgebra structure is similarly determined by the $\C$-coalgebra structure assigned the pushout of the map in the image at the bottom right. Writing down explicit formulae (I.5.4), it is easy to see that the process of assigning coalgebra structures to pushouts commutes with the comparison map for $\M$. 
\end{proof}

\begin{thank} This work benefited from conversations with Peter May and Richard Garner. The author is grateful for an opportunity to present this work to the group at Sheffield University at the invitation of Eugenia Cheng, greatly aiding the expository process. She continually appreciates the supportive environment provided by the algebraic topology and category theory seminar at the University of Chicago. An anonymous referee made a number of cogent suggestions leading to structural improvements. Finally, the author is grateful for financial support from the NSF Graduate Research Fellowship.
\end{thank}

\bibliographystyle{elsarticle-num}

\end{document}